\newif\ifdraft
  \newdimen{\tightmargin} \setlength{\tightmargin}{0.7cm}
\definecolor{darkgreen}{rgb}{0,0.2,0}
\definecolor{darkred}{rgb}{0.25,0,0}
\definecolor{darkblue}{rgb}{0,0,0.3}
\setlist{itemsep=0.25ex}
\theoremstyle{plain}
\newtheorem{theorem}{Theorem}[section]
\newtheorem{proposition}[theorem]{Proposition}
\newtheorem{lemma}[theorem]{Lemma}
\newtheorem{corollary}[theorem]{Corollary}
\theoremstyle{definition}
\newtheorem{definition}[theorem]{Definition}
\newtheorem{example}[theorem]{Example}
\numberwithin{equation}{section}
\newenvironment{propositionwithqed}[1][]
  {\@ifmtarg{#1}{\begin{proposition}}{\begin{proposition}[#1]}\pushQED{\qed}}
  {\popQED\end{proposition}}
\newcommand{\extrafootertext}[1]{%
    \bgroup
    \renewcommand\thefootnote{\fnsymbol{footnote}}%
    \renewcommand\thempfootnote{\fnsymbol{mpfootnote}}%
    \footnotetext[0]{#1}%
    \egroup
}
\newcounter{saveenumi}
\newif\ifshow
  \newcommand{\todo}[1]{\textcolor{darkred}{#1}}
  \newcommand{\placeholder}[1]{\textcolor{purple}{#1}}
  \newcommand{\internal}[1]{\textcolor{darkgreen}{#1}}
  \newcommand{\obsolete}[1]{\textcolor{gray}{#1}}
  \newcommand{\todo}[1]{}
  \newcommand{\placeholder}[1]{}
  \newcommand{\internal}[1]{}
  \newcommand{\obsolete}[1]{}
\newcommand{\defemph}[1]{\emph{\textbf{#1}}}
\newcommand{\defeq}{\coloneqq}
\newcommand{\N}{\mathbb{N}}
\newcommand{\injto}{\hookrightarrow} 
\newcommand{\idmap}[1][]{\@ifmtarg{#1}{\operatorname{id}}{\operatorname{id}_{#1}}} 
\newcommand{\tuple}[2]{\langle #1 \rangle_{#2}} 
\newcommand{\set}[1]{\{#1\}}
\newcommand{\such}{\mid}
\newcommand{\initialSegment}[2][]{{\downarrow}_{#1}#2} 
\newcommand{\all}[1]{\forall #1 \,.\,}
\newcommand{\lthen}{\Rightarrow}
\DeclareMathOperator{\args}{arg} 
\newcommand{\argclass}[2]{\operatorname{cl}_{#1} #2} 
\newcommand{\argbinder}[2]{\operatorname{bind}_{#1} #2} 
\DeclareMathOperator{\simplearity}{simp} 
\newcommand{\class}[1]{\operatorname{c}_{#1}} 
\newcommand{\arity}[1]{\upalpha_{#1}} 
\newcommand{\mto}{\colon} 
\newcommand{\Expr}[3]{\operatorname{Expr}^{#1}_{#2}\@ifmtarg{#3}{}{(#3)}} 
\newcommand{\ExprTy}[2]{\Expr{\Ty}{#1}{#2}} 
\newcommand{\ExprTm}[2]{\Expr{\Tm}{#1}{#2}} 
\newcommand{\Context}[1]{\operatorname{Cxt}{#1}} 
\newcommand{\Judg}[1]{\operatorname{Judg}{#1}} 
\newcommand{\RawRule}[1]{\operatorname{RawRule}{#1}} 
\newcommand{\ctxextend}[2]{#1 \mathbin{.} #2}  
\newcommand{\SequentialRule}[2]{#1 \Longrightarrow #2} 
\newcommand{\RuleBoundary}[2]{#1 \Longrightarrow #2} 
\newcommand{\wfreplace}[1]{#1^{\mathrm{wf}}} 
\newcommand{\mvextend}[2]{#1 + #2}  
\newcommand{\Inst}[3]{\operatorname{Inst}_{{#1},{#2}}(#3)} 
\newcommand{\mvpromote}[1]{\widetilde{#1}} 
\newcommand{\Ty}{\mathsf{ty}} 
\newcommand{\Tm}{\mathsf{tm}} 
\newcommand{\TyEq}{\mathsf{tyeq}} 
\newcommand{\TmEq}{\mathsf{tmeq}} 
\DeclareMathOperator{\ob}{ob} 
\newcommand{\Presup}[1]{\operatorname{Presup}{#1}} 
\newcommand{\congrule}[1]{{#1}_{\equiv}} 
\newcommand{\types}{\mathrel{\vdash}} 
\newcommand{\of}[1][1]{\mspace{#1 mu plus 1.0mu}\mathord{:}\mspace{#1 mu plus 1mu}} 
\newcommand{\type}{\;\mathsf{type}} 
\newcommand{\cxt}{\mathrel{\textsf{cxt}}} 
\newcommand{\emptycxt}{[\,]} 
\newcommand{\judgeq}{\equiv}
\newcommand{\typesjudgement}{\types}
\newcommand{\istype}[2]{#1 \typesjudgement #2 \type} 
\newcommand{\isterm}[3]{#1 \typesjudgement #2 : #3} 
\newcommand{\eqtype}[3]{#1 \typesjudgement #2 \judgeq #3} 
\newcommand{\eqterm}[4]{#1 \typesjudgement #2 \judgeq #3 : #4} 
\newcommand{\typesboundary}{\types} 
\newcommand{\bdryhead}{\Box} 
\newcommand{\qjudgeq}{\judgeq^{\scriptscriptstyle ?}}
\newcommand{\istypebdry}[1]{#1 \typesboundary \bdryhead \type} 
\newcommand{\iscxt}[1]{\types #1 \cxt} 
\newcommand{\plug}[2]{#1[#2]}
\newcommand{\cat}[1]{\mathcal{#1}} 
\newcommand{\Set}{\mathrm{Set}} 
\newcommand{\RawTh}{\mathrm{RawTh}} 
\newcommand{\Sig}{\mathrm{Sig}} 
\newcommand{\Scope}{\mathrm{Scope}} 
\newcommand{\act}[1]{#1_{*}} 
\newcommand{\tca}[1]{#1^{*}} 
\newcommand{\rename}[1]{\act{#1}} 
\DeclareMathOperator{\clos}{cl} 
\newcommand{\closureSystem}[1]{\mathcal{#1}} 
\newcommand{\csS}{\closureSystem S} 
\newcommand{\csT}{\closureSystem T} 
\DeclareMathOperator{\premises}{prems} 
\DeclareMathOperator{\conclusion}{concl} 
\newcommand{\derivation}[3]{\operatorname{Der}_{#1}(#2,#3)} 
\DeclareMathOperator{\hyp}{hyp} 
\newcommand{\der}[2]{\operatorname{der}(#1,#2)} 
\DeclareMathOperator{\ClosureRule}{Rule} 
\DeclareMathOperator{\ClosureSystem}{Clos} 
\newcommand{\StructuralRules}{\operatorname{Struct}} 
\newcommand{\position}[1]{|#1|} 
\newcommand{\emptyscope}{\mathsf{0}} 
\newcommand{\sumscope}[2]{#1 \oplus #2} 
\DeclareMathOperator{\inlscope}{inl} 
\DeclareMathOperator{\inrscope}{inr} 
\newcommand{\singletonscope}{\mathsf{1}}
\newcommand{\numscope}[1]{[{#1}]}
\newcommand{\numscopemap}[2]{w_{{#1},{#2}}}
\DeclareMathOperator{\Fam}{Fam} 
\DeclareMathOperator{\famindex}{ind} 
\newcommand{\famev}[1]{\operatorname{ev}_{#1}} 
\newcommand{\family}[1]{\langle #1 \rangle} 
\newcommand{\famtuple}[2]{\family{#1 \mid #2}} 
\newcommand{\fammap}[2]{\family{#1}_{#2}} 
\newcommand{\singletonfamily}[1]{\family{#1}} 
\newcommand{\emptyfam}{\langle \, \rangle} 
\newcommand{\inl}{\iota_0}
\newcommand{\inr}{\iota_1}
\newcommand{\synvar}[1]{\mathsf{var}_{#1}} 
\newcommand{\synmeta}[1]{\mathsf{meta}_{#1}} 
\newcommand{\symb}[1]{\mathsf{#1}} 
\newcommand{\symPi}{\Uppi} 
\newcommand{\symSigma}{\Upsigma} 
\newcommand{\symlambda}{\uplambda} 
\newcommand{\symapp}{\symb{app}} 
\newcommand{\synPi}[1][]{\symPi(#1)} 
\newcommand{\symA}{\symb{A}}
\newcommand{\symB}{\symb{B}}
\newcommand{\symC}{\symb{C}}
\newcommand{\symS}{\symb{S}}
\newcommand{\syma}{\symb{a}}
\newcommand{\symf}{\symb{f}}
\newcommand{\genapp}[1]{\widehat{#1}} 
\newcommand{\natty}[2]{\tau_{#1}(#2)} 
\newcommand{\prd}[1]{\symPi (#1) \,.\,} 
\newcommand{\coqident}[1]{#1} 
\begin{document}

\title{A general definition of dependent type theories}

\author{Andrej Bauer
  \and Philipp G.~Haselwarter
  \and Peter LeFanu Lumsdaine}

\date{September 11, 2020}

\maketitle

\extrafootertext{The authors benefited greatly from visits funded by the COST Action \href{https://eutypes.cs.ru.nl}{\emph{EUTypes}} CA15123.}
\extrafootertext{This material is based upon work supported by the U.S.~Air Force Office of Scientific Research under award number FA9550-17-1-0326, grant number 12595060.}

\begin{abstract}
  We define a general class of dependent type theories, encompassing Martin-Löf’s intuitionistic type theories and variants and extensions.
  The primary aim is pragmatic: to unify and organise their study, allowing results and constructions to be given in reasonable generality, rather than just for specific theories.
  Compared to other approaches, our definition stays closer to the direct or naïve reading of syntax, yielding the traditional presentations of specific theories as closely as possible.
  
  Specifically, we give three main definitions: \emph{raw type theories}, a minimal setup for discussing dependently typed derivability; \emph{acceptable type theories}, including extra conditions ensuring well-behavedness; and \emph{well-presented type theories}, generalising how in traditional presentations, the well-behavedness of a type theory is established step by step as the type theory is built up.
  Following these, we show that various fundamental fitness-for-purpose metatheorems hold in this generality.

  Much of the present work has been formalised in the proof assistant Coq.
\end{abstract}

\tableofcontents

\newpage

\section{Introduction}

\subsection{Overview}

  We give a general definition of dependent type theories,
  encompassing for example Martin-Löf’s intuitionistic type theories and many variants and extensions.

  The primary aim is to give a setting for formal general versions of various constructions and results,
  which in the literature have been given for specific theories but are heuristically understood to hold for a wide class of theories:
  for instance, the conservativity theorem of \citep{hofmann:syntax-and-semantics}, or the coherence theorems of \citep{hofmann:lcccs,lumsdaine-warren:local-universes}.
  
  This has been a sorely felt gap in the literature until quite recently;
  the present work is one of several recent approaches to filling it
  \citep{isaev17:_algeb,uemura19:_gener_framew_seman_type_theor,brunerie:_agda}.
  
  A secondary aim is to stick very closely to an elementary understanding of syntax.
  Established general approaches --- for instance, logical frameworks and categorical semantics --- give, in examples, not the original syntax of the example theories, but an embedded or abstracted version, typically then connected to the original syntax by adequacy or initiality theorems.
  Our approach directly recovers quite conventional presentations of the example theories themselves.
  
  As a corollary of this goal, we must confront the bureaucratic design decisions of syntax: the selection of structural rules, and so on.
  These are often swept under the rug in specific type theories as “routine”; to initiates they are indeed standard, but newcomers to the field often report finding this lack of detail difficult.
  We therefore elide nothing, and set out a precise choice of all such decisions, carefully chosen and proven to work well in reasonable generality, which we hope will be of value to readers.

  In pursuit of the above aims, we offer not one main definition of type theory, but three, at increasing levels of refinement.
  
  Firstly, we define \emph{raw type theories}, as a conceptually minimal description of traditional presentations of type theories by symbols and rules,
  sufficient to define the derivability relation, but not yet incorporating any well-formedness constraints on the rules.
  
  Secondly, we give sufficient conditions on a raw type theory to imply that derivability over it is well-behaved in various standard ways.
  Specifically, we isolate simple syntactic checks that suffice to imply core fitness-for-purpose properties, and package these into the notion of an \emph{acceptable type theory}.
  
  Thirdly, we analyse the well-founded nature of traditional presentations,
  involved in more elaborate constructions such as the categorical semantics,
  as well as (arguably) the intuitive assignment of meaning to a theory.  
  This leads us to the notion of \emph{well-presented type theories}, which we hope can serve as a full-fledged proposal fulfilling our primary aim.
  
\subsection{Specifics}

We aim, as far as possible, not to argue for any novel approach to setting up type theories, but simply to give a careful analysis of how type theories are traditionally presented, in order to lay out a generality in which such presentations can be situated.
As such, the first few components of our definition are the expected ones.

We begin with an appropriate notion of \emph{signature}, for untyped syntax with variable-binding, and develop the standard notions of ``raw'' \emph{syntactic expressions} over such signatures, including substitution, translation along signature morphisms, and so on.

With the syntax of types and terms properly set up, a type theory is traditionally presented by giving a collection of \emph{rules}.
Type theorists are very accustomed to reading these --- but as anyone who has tried to explain type theory to a non-initiate knows, there is a lot to unpack here.
The core of our definition is a detailed study of the situation: what is really going on when we write and read inference rules, check that they are meaningful, and interpret them as a presentation of type theory?

Take the formation rule for $\symPi$-types:
\[
  \inferrule* {
    { \istype \Gamma A }
    \and
    { \istype {\Gamma, x \of A} B }
  }
  { \istype \Gamma {\prd {x \of A} B} }
\]
When pressed to explain this, most type theorists will say that the rule represents \emph{inductive clauses} for constructing derivations, or \emph{closure conditions} for the derivability predicate:
given derivations of the judgements above the line, a derivation of the judgement below the line is constructed. In particular, if $\Gamma$, $A$ and~$B$ are syntactically valid representations of a context and types, and the judgements
\[
 \istype{\Gamma}{A},
 \qquad\text{and}\qquad
 \istype{\Gamma, x \of A}{B}
\]
are both derivable, then so is the judgement
\[
  \istype{\Gamma}{\prd {x \of A} B}.
\]
This understanding of rules is sufficient for explaining the definition of a specific type theory, and defining derivability of judgements.

However, it is in general too permissive: to be well-behaved, type theories should not be given by arbitrary closure conditions, but only by those that can be specified syntactically by rules looking something like the traditional ones.
In other words, we want to make explicit the idea of a rule as a syntactic entity in its own right, accompanied with a mathematically precise explanation of what makes it type-theoretically acceptable, and how it gives rise to a quantified family of closure conditions.

So to a first approximation, we say a rule consists of a collection of judgements --- its \emph{premises} --- and another judgement, its \emph{conclusion}.
However, a subtlety lurks: what are $\Gamma$, $A$, and $B$ in the above $\symPi$-formulation rule?

The symbol $\Gamma$ is easy: we can dispense with it entirely.
We prescribe (as type theorists often do, heuristically) that all rules should be valid over arbitrary contexts, and so since the arbitrary context is always present in the interpretation of a rule as a family of closure conditions, it never needs to be included in the syntactic specification of the rule.
This precisely justifies a common “abuse of notation” in presenting type theories: the context $\Gamma$ is omitted when writing down the rules, and one mentions apologetically somewhere that all rules should be understood as over an arbitrary ambient context.

Explaining $A$ and $B$ is more interesting.
They are generally called “metavariables”, and in the family of closure conditions they are indeed that --- quantified variables of the meta-theory, ranging over syntactic entities.
However, if the rule is to be considered as formal syntactic entity, $A$ and $B$ must themselves be part of that syntax.
We therefore take the premises and conclusion of rules as formed over the ambient signature of the type theory \emph{extended} with extra symbols $\symA$ and $\symB$ to represent the metavariables of the rule.

These considerations result in the notion of a \emph{raw rule}, whose premises and conclusion are raw judgements over a signature extended with metavariables. A \emph{raw type theory} is then just a family of raw rules. It holds enough information to be used, but still permits arbitrariness that must be dispensed with.
At the very least, the type and term expressions appearing in the rule ought to represent derivable types and terms, respectively.
Thus in the next stage of our definition we ask that every rule be accompanied with derivations showing that the \emph{presuppositions} hold, namely, that its type expressions are derivable types and that its terms have derivable types.
Another condition that we impose on rules is \emph{tightness}, which roughly requires that the metavariables symbols be properly typed by the premises, and for rules that build term or a type judgements, that they do so in the most general form.

Even though every rule of a raw type theory may be presuppositive and tight, the theory as a whole may be deficient, for example, if one of the symbols has no corresponding formation rule, or several of them. Overall we call a type theory \emph{tight} when there is bijective correspondence between its symbols and formation rules, which are tight themselves. And to make sure equality is well behaved, we also require that for each symbol there is a suitable congruence rule ensuring that the symbol commutes with equality. When a raw type theory has all these features, we call it an \emph{acceptable} type theory.

Because the derivations of presuppositions appeal to the very rules they certify, an unsettling possibility of circular reasoning arises.
We resolve the matter in two ways. 
First, we add one last stage to the definition of type theories and ask that all the rules, as well as the premises within each rule, be ordered in a well-founded manner.
Second, we show that for acceptable type theories whose contexts and premises are well-founded as finite sequences, circularities can always be avoided by passing to the \emph{well-founded replacement} of the theory (\cref{sec:well-founded-replacement}).
Apart from expelling the daemons of circularity, the well-founded order supplies a useful induction principle.

In the end, the definition of a general type theory has roughly five stages:
\begin{enumerate}
\item the \emph{signature} (\cref{def:signature}) describes the arities of primitive type and term symbols that form the \emph{raw syntax} (\cref{def:raw-syntax}),
\item \emph{raw rules} (\cref{def:raw-rule}) constitute a \emph{raw type theory} (\cref{def:raw-type-theory}),
\item the raw rules are verified to be \emph{tight} and \emph{presuppositive} (\cref{def:tight-rule,def:weakly-presuppositive-rule}, and therefore \emph{acceptable} (\cref{def:acceptable-rule}),
\item the raw type theory is verified to consist of acceptable symbol rules (\cref{def:symbol-rule}) and equations, that it is \emph{tight} and \emph{congruous}, and therefore \emph{acceptable} (\cref{def:theory-good-properties}).
\item finally, an acceptable type theory may be \emph{well-presented} (\cref{def:well-presented-type-theory}) and hence \emph{well-founded} (\cref{def:well-founded-theory}), or we may pass to its \emph{well-founded replacement} (\cref{thm:wf-replacement-equivalence}).
\end{enumerate}

We readily acknowledge that there are many alternative ways of setting up type theories, each serving a useful purpose. It is simply our desire to actually give \emph{one} mathematically complete description of what type theorie\emph{s} are in general.

Once the definition is complete, we should provide evidence of its scope and utility. We do so in \cref{sec:well-behavedness} by proving fundamental meta-theorems, among which are:
\begin{enumerate}
\item Derivability of presuppositions, \cref{thm:presuppositions}, stating that the presuppositions of a derivable judgement are themselves derivable.
\item Elimination of substitution, \cref{thm:elimination-substitution}, stating that anything that can be derived using the substitution rules can also be derived without them.
\item Uniqueness of typing, \cref{thm:tight-uniqueness-of-typing}, stating that a term has at most one type, up to judgmental equality.
\item An inversion principle, \cref{thm:inversion-principle}, that reconstructs the proof-relevant part of the derivation of a derivable judgement from the information given in the judgement.
\end{enumerate}

Our definitions are set up to support a meta-theoretic analysis of type theories, but deviate from how type theories are presented in practice.
First, one almost always encounters only finitary syntax in which contexts and premises are presented as finite sequences -- we call these \emph{sequential contexts and premises} and treat them in \cref{sec:sequential-contexts,sec:sequential-rules}.
Second, theories are not constructed in five stages, but presented through rules that are manifestly acceptable and free of circularities, while symbols are introduced simultaneously with their formation rules. In \cref{sec:well-presented-rules,sec:well-presented-theories} we make these notions precise by defining \emph{well-presented rules and theories}, and their realisations as raw type theories.

Having heard tales about minor but insidious mistakes in the literature on the meta-theory of syntax, we decided to protect ourselves from them by formalising parts of our paper in the Coq proof assistant~\citep{coq}. An overview of the formalisation is given in \cref{sec:formalisation-coq}, including comments about the meta-mathematical foundations sufficient for carrying out our work.
The formalisation allows us to claim a high level of confidence and omission of routine syntactic arguments. Nevertheless, we still strove to make the paper self-contained by following the established standards of informal rigour.

\subsection{Disclaimers}

Having said what this paper is about, it is worth saying a little about what it is \emph{not}.

It is most certainly not intended as a prescriptive definition of what all dependent type theories should be.
Many important type theories in the literature are not covered by our definition, and we do not mean to reject them.
The aim of this work is simply pragmatic: to encompass \emph{some large class} of theories of interest, in order to better organise and unify their study.
We very much hope our approach may be extended to wider generalities.

We do not claim or aim to supersede other general approaches to studying type theories, such as those based on logical frameworks.
Such approaches are well-developed, powerful for many applications, and sidestep some complications of the present approach.
However, all such approaches (that we are aware of) work by using a somewhat modified syntax (e.g.~embedded in a larger system) --- the syntax they yield is not obviously the same as the syntax given by a “direct” or “naïve” reading of presentations of theories.
They are typically accompanied by adequacy theorems, or similar, showing equivalence between the modified syntax and the naïve, for the specific type theory under consideration.

By contrast, we aim to directly study and generalise the naïve approach itself, which (to our knowledge) has not been done previously in such generality.
Our motivations are therefore largely complementary to such approaches.
A more detailed comparison is given in \cref{sec:discussion-and-related-work}.


\section{Preliminaries}
\label{sec:preliminaries}

We begin by setting up definitions and terminology of a general mathematical nature that we will use throughout.

\subsection{Families}

For several reasons, we work with \emph{families} in places where classical treatments would use either \emph{subsets} of, or \emph{lists} from, a given set.
While the term is standard (e.g. ``the product of a family of rings''), we make rather more central use of it than is usual, so we establish some notations and terminology.

\begin{definition}
  \label{def:family}%
  Given a set $X$, a \defemph{family $K$ of elements of $X$} (or briefly, a \defemph{family on~$X$}) consists of an index set $\famindex K$ and a map $\famev{K} : \famindex{K} \to X$.
  We let $\Fam X$ denote the collection of all families on~$X$, and use the \defemph{family comprehension} notation $\famtuple{e_i \in X}{i \in I}$ for the family indexed by~$I$ that maps~$i$ to~$e_i$. A family may be explicitly described by displaying the association of indices to values. For example, we may write $\family{0 \mto e_0, 1 \mto e_1, 2 \mto e_2}$ for the family $\famtuple{e_i}{i \in \set{0,1,2}}$.
\end{definition}

\begin{example}
  Any \emph{subset} $A \subseteq X$ can be viewed as a family $\famtuple{i \in X}{i \in A}$, with $\famindex A$ as $A$ itself and $\famev{A}$ the inclusion $A \injto X$.
  Motivated by this, we will often speak of a family~$K$ as if it were a subset, writing $x \in K$ rather than $x \in \famindex K$, and treating such $x$ itself as an element of $X$ rather than explicitly writing $\famev{K}(x)$.
\end{example}

\begin{example}
  Any \emph{list} $\ell = [x_0,\ldots,x_n]$ of elements of $X$ can be viewed as a family, with $\famindex \ell = \{ 0, \ldots, n \}$ and $\famev{\ell}(i) = x_i$, or equivalently $\ell = \family{0 \mto x_0, \ldots, n \mto x_n}$.
  We will often use list notation to present concrete examples of families.
\end{example}

Working constructively, it is quite important to keep the distinction between families and subsets where classical treatments would confound them.
For instance, a propositional theory is usually classically defined as a \emph{set} of propositions; we would instead use a \emph{family} of propositions.
In a derivation over the theory, uses of axioms therefore end up “tagged” with elements of the index set of the theory, typically explaining how a certain proposition arises as an axiom (since the same proposition might occur as an instance of axiom schemes in multiple ways).
These record constructive content which may be needed for, say, interpreting axioms according to a proof by cases over the axiom schemes of the theory.

Our use of families where most traditional treatments use lists --- e.g.\ for specifying the argument types of a constructor --- is less mathematically significant.
It is partly to avoid baking in assumptions of finiteness or ordering where they are not required; but it is mostly motivated just by the formalisation, where families provide a more appropriate abstraction.

\begin{definition}
  \label{def:family-map}%
  A \defemph{map of families} $f : K \to L$ between families~$K$ and~$L$ on~$X$ is a map
  $f : \famindex K \to \famindex L$ such that $\famev{L} \circ f = \famev{K}$.
\end{definition}

We shall notate such a map as $\fammap{f(x)}{x \in \famindex K}$. Indeed, the notation $\fammap{f(x)}{x \in A}$ works for \emph{any} maps $f : A \to B$, as it is just an alternative way of writing $\lambda$-abstractions.

Families and their maps form a category $\Fam X$, which is precisely the slice category $\Set/X$.
A map $r : X \to Y$ yields a functorial action $\act{r} : \Fam X \to \Fam Y$ which takes $K \in \Fam X$ to the family $\act{r} K$ with $\famindex (\act{r} K) = \famindex K$ and $\famev{\act{r} K} = r \circ \famev{K}$. It is perhaps clearer to write down the action in terms of family comprehension: $\act{r} \famtuple{e_i}{i \in I} = \famtuple{r(e_i)}{i \in I}$.

\begin{definition}
  \label{def:family-map-over}%
  Given a function $r : X \to Y$ and families $K$, $L$ on $X$, $Y$ respectively, a \defemph{map $f : K \to L$ over $r$} is a map $f : \act{r} K \to L$; equivalently, a map $f : \famindex K \to \famindex L$ forming a commutative square over $r$.
\end{definition}

\subsection{Closure systems}

The general machinery of derivations as closure systems occurs throughout logic, and is independent of the specific syntax or judgements of the logical systems involved.

\begin{definition}
  \label{def:closure-rule}\label{def:closure-system}%
  A \defemph{closure rule $(P, c)$} on a set~$X$ consists of a family $P$ of elements in~$X$, its \defemph{premises}, and a \defemph{conclusion} $c \in X$.
  A \defemph{closure system~$\csS$} on a set~$X$ is a family of closure rules on~$X$, where we respectively write $\premises R$ and $\conclusion R$ for the premises and the conclusion corresponding to a rule $R \in \csS$.
  We write $\ClosureRule X$ and $\ClosureSystem X$ for the collections of closure rules and closure systems on~$X$, respectively.
\end{definition}

As is tradition, we display a closure rule with premises $[p_1,\ldots,p_n]$ and conclusion~$c$ as
\[
  \inferrule{p_1 \quad \cdots \quad p_n}{c}
\]

The constructions of closure rules and closure systems are evidently functorial in the ambient set.
A map $f : X \to Y$ sends a rule $R \in \ClosureRule X$ to the rule $\act{f} R \in \ClosureRule Y$ with $\premises (\act{f} R) \defeq \famtuple{f(p)}{p \in \premises R}$ and $\conclusion (\act{f} R) \defeq f(\conclusion R)$.
Similarly, a closure system $\csS$ on $X$ is taken to the closure system $\act{f} \csS$ on $Y$, defined by $\act{f} \csS \defeq \famtuple{\act{f} r}{r \in \csS}$.

\begin{definition}
  A \defemph{simple map} $\csS \to \csT$ between closure systems $\csS$ and $\csT$ on~$X$ is just a map between them as families.
  More generally, a \defemph{simple map $\bar{f} : \csS \to \csT$ over $f : X \to Y$} from $\csS \in \ClosureSystem X$ to $\csT \in \ClosureSystem Y$
  is just a simple map $\bar{f} : \act{f}\csS \to \csT$, or equivalently a family map~$\bar{f}$ over $\act{f} : \ClosureRule X \to \ClosureRule Y$.
\end{definition}

A closure system yields a notion of derivation:

\begin{definition}
  \label{def:closure-system-derivation}%
  Given a closure system $\csS$ on $X$, a family $H$ of elements in $X$, and an element $c \in X$, the
  \defemph{derivations~$\derivation{\csS}{H}{c}$ of~$c$ from hypotheses~$H$} are inductively generated
  by:
  \begin{enumerate}
  \item for every $h \in H$, there is a corresponding derivation $\hyp h \in \derivation{\csS}{H}{h}$,
  \item for every rule $R \in \csS$ and a map $D \in \prod_{p \in \premises R} \derivation{\csS}{H}{p}$ there is a derivation $\der{R}{D} \in \derivation{\csS}{H}{\conclusion R}$.
  \end{enumerate}
\end{definition}

In the second clause above $D$ is a dependent map, i.e., for each $p \in \premises R$ we have $D_p \in \derivation{\csS}{H}{p}$.
We do not shy away from using products of families and dependent maps when the situation demands them.

The elements of $\derivation{\csS}{H}{c}$ may be seen as well-founded trees with edges and nodes suitably labelled from $X$, $\csS$, and $H$.
We take such inductively generated families of sets as primitive;
their existence may be secured one way or another, depending on the ambient mathematical foundations.
The essential feature of derivations, which we rely on, is the structural induction principle they provide.

It is easy to check that derivations are functorial in simple maps of closure systems, in a suitable sense:
\begin{propositionwithqed}
  A simple map $\bar{f} : \csS_X \to \csS_Y$ of closure systems over $f : X \to Y$ acts on derivations as $\act{\bar{f}} : \derivation{\csS_X}{H}{c} \to \derivation{\csS_Y}{\act{f}H}{f(c)}$ for each $H$ and $c$.
  The action is moreover functorial, in that $\act{\idmap} = \idmap$ and $\act{(\bar{f} \circ \bar{g})} = \act{\bar{f}} \circ \act{\bar{g}}$.
\end{propositionwithqed}

Often, one wants a more general notion of map, sending each rule of the source system not necessarily to a single rule of the target system, but instead to a \emph{derived} rule:

\begin{definition}
  \label{def:derivation-grafting}%
  A \defemph{derivation of a rule} $R$ over a closure system $\csS$ is a derivation of $\conclusion R$ from $\premises R$ over $\csS$.
  Given such a derivation, we call $R$ a \defemph{derived rule} of $\csS$, or say $R$ is \defemph{derivable over $\csS$}.
  A \defemph{map of closure systems} $\bar{f} : \csS \to \csT$ over $f : X \to Y$ is a function giving, for each rule $R$ of $C$, a derivation of $\act{f}R$ in $\csT$.
\end{definition}

To show that maps of closure systems preserve derivability, we need a \emph{grafting} operation on derivations.

\begin{lemma}
  \label{lem:hypotheses-grafting}
  Given an ambient closure system $\csS$, suppose $D$ is a derivation of $c$ from hypotheses~$H$ over $\csS$, and for each $h \in H$, $D_h$ is a derivation of $h$ from $H'$.
  Then there is a derivation of $c$ from $H'$ over $\csS$.
\end{lemma}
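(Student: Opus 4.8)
The plan is to proceed by structural induction on the derivation $D$, using the induction principle for $\derivation{\csS}{H}{-}$ provided by \cref{def:closure-system-derivation}. Since in the rule case the premises have conclusions different from~$c$, the induction hypothesis must be available at all conclusions simultaneously; so rather than fixing $c$, I would construct, for the fixed family $\famtuple{D_h}{h \in H}$, a function
\[
  g : \prod_{c \in X} \bigl( \derivation{\csS}{H}{c} \to \derivation{\csS}{H'}{c} \bigr),
\]
and then obtain the required derivation of $c$ from $H'$ as $g(c)(D)$.

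There are two cases in the definition of $g$, matching the two generators of $\derivation{\csS}{H}{c}$. For a hypothesis derivation $\hyp h$ (so that $c = h \in H$), I set $g(h)(\hyp h) \defeq D_h$, which is by assumption a derivation of $h$ from $H'$. For a rule application $\der{R}{E}$, where $R \in \csS$ with $c = \conclusion R$ and $E \in \prod_{p \in \premises R} \derivation{\csS}{H}{p}$, the induction hypothesis supplies $g(p)(E_p) \in \derivation{\csS}{H'}{p}$ for each premise~$p$. These assemble into a dependent map $\famtuple{g(p)(E_p)}{p \in \premises R} \in \prod_{p \in \premises R} \derivation{\csS}{H'}{p}$, so I may set $g(\conclusion R)(\der{R}{E}) \defeq \der{R}{\famtuple{g(p)(E_p)}{p \in \premises R}}$, which lies in $\derivation{\csS}{H'}{\conclusion R}$ as required.

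This construction is essentially forced, and I do not expect any genuine obstacle. The only point requiring care is to set up the induction with the conclusion $c$ universally quantified, rather than fixed, so that the induction hypothesis is available at each premise $p$ of~$R$; note also that the same family $\famtuple{D_h}{h \in H}$ is reused unchanged at every leaf of~$D$. The hypotheses $H'$ and the rules of $\csS$ play no active role beyond being carried along, so no well-foundedness or finiteness assumption on $H$, $H'$, or $\csS$ is needed, and the argument is manifestly constructive.
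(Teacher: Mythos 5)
Your proof is correct and takes essentially the same approach as the paper: structural induction on $D$, replacing each leaf $\hyp h$ by $D_h$ and reassembling rule applications $\der{R}{E}$ with the inductively transformed premise derivations. The extra care you take in quantifying over the conclusion $c$ is a precise rendering of the induction the paper performs implicitly.
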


\begin{proof}
  The derivation of~$c$ from~$H'$ is constructed inductively from a derivation of~$c$ from~$H$:
  \begin{enumerate}
  \item if $c$ is derived as one of the hypotheses $h \in H$, then $D_h$ derives $c$ from $H'$,
  \item if $\der{R}{D'}$ derives $c$ from $H$, then for each $p \in \premises R$ we inductively obtain a derivation $D''_p$ of $p$ from $H'$ from the corresponding derivation~$D'_p$ of $p$ from $H$, and assemble these into the derivation $\der{R}{D''}$ of~$c$ from~$H'$. \qedhere
  \end{enumerate}
\end{proof}

\begin{definition}
  A closure system map $\bar{f} : \csS \to \csT$ over $f : X \to Y$ \defemph{acts on derivations}: if $D$ is a derivation of $c$ from $H$ over $\csS$, there is a derivation $\act{\bar{f}}D$ of $f(c)$ from $\act{f}H$ over $\csT$.
\end{definition}

\begin{proof}
  $\act{\bar{f}}D$ is defined by recursion on $D$.
  Wherever $D$ uses a rule $R$ of $\csS$, with derivations $D_h$ of the premises, $\act{\bar{f}}D$ uses the given derivation $\bar{f}(R)$ of $\act{f}R$, with the derivations $\act{\bar{f}}D_h$ grafted in at the hypotheses.
\end{proof}

Categorically, grafting can be recognised as the multiplication operation of a monad structure on derivations, and our maps of closure systems can be seen as Kleisli maps for this monad (relative to simple maps).
One can thus show that they form a category, that the action on derivations is functorial, and so on.
We do not make this precise here, as it is not required for the present paper.

\subsection{Well-founded orders}

There will be several occasions when we shall have to prevent dependency cycles (between premises of a rule, or between rules of a type theory). For this purpose we review a notion of well-foundedness which accomplishes the task.

\begin{definition}
  \label{def:well-founded-order}%
  A \defemph{strict partial order} on a set $A$ is an irreflexive and transitive relation~$<$ on~$A$.
  A subset $S \subseteq A$ is \defemph{$<$-progressive} when, for all $x \in A$,
  \begin{equation*}
    (\all{y \in A} y < x \lthen y \in S) \lthen x \in S.
  \end{equation*}
  A \defemph{well-founded order} is a strict partial order~$<$ in which a subset is the entire set as soon as it is $<$-progressive.
  For each $x \in A$, the \defemph{initial segment} $\initialSegment{i} \defeq \set{y \in A \such y < x}$ is the set of elements preceding~$x$ with respect to the order.
\end{definition}

\noindent
In terms of an induction principle a strict partial order is well-founded when, for every predicate~$\varphi$ on~$A$,
\begin{equation*}
  \all{x \in A}{
    (\all{y \in A} y < x \lthen \varphi(y)) \lthen \varphi(x)
  } \lthen
  \all{x \in A}{\varphi(x)}.
\end{equation*}
Classically there are many equivalent definitions of well-founded orders.
Constructively, the situation is more complicated, cf.\ \cite[\textsection 2.5]{taylor:practical-foundations}; this definition is one of the most standard, and the most suited to our purpose.


\section{Raw syntax}
\label{sec:raw-syntax}

In this section, we set out our treatment of raw syntax with binding, sometimes called ``pre-syntax'' to indicate that no typing information is present at this stage.
There is nothing essentially novel --- briefly, we use a standard modern treatment, closely inspired by that of \citep{fiore-plotkin-turi}, but focus on concrete constructions rather than categorical characterisations.
So we take raw expressions as inductively generated trees, and scope systems, developed below, for keeping track of variable scopes and binding.
We spell out the details in order to have a self-contained presentation tailored to our requirements, and to set up terminology and notation we will use later.

\subsection{Scope systems}

We first address the question of how to treat variables and binding.
Should we use terms with named variables up to $\alpha$-equivalence, or de Bruijn indices, or reuse the binding structure of a framework language?
The last option is appealing, as it dispenses with many cumbersome details,
but we shall avoid it precisely because we want to confront the cumbersome details of type theory.
%

Rather than choosing a particular answer, we formulate and use a general structure for binding, abstracting away the implementation-specific details of several approaches, but retaining the common structure required for defining syntax.

\begin{definition}
  \label{def:scope-system}
  A \defemph{scope system} consists of:
  \begin{enumerate}
  \item a collection of \defemph{scopes}~$\cat{S}$;
  \item for each scope $\gamma$, a set of \defemph{positions}~$\position{\gamma}$;
  \item an \defemph{empty scope~$\emptyscope$} with no positions, $\position{\emptyscope} = \emptyset$;
  \item a \defemph{singleton scope} $\singletonscope$ with a unique position, $\position{\singletonscope} = 1$;
  \item operations giving for all scopes $\gamma$ and $\delta$ a \defemph{sum scope} $\sumscope{\gamma}{\delta}$, and functions
    \begin{equation*}
      \xymatrix{
        {\position{\gamma}} \ar[r]^{\inlscope} &
        {\position{\sumscope{\gamma}{\delta}}} &
        {\position{\delta}} \arrow[l]_{\inrscope}
      }
    \end{equation*}
    exhibiting $\position{\sumscope{\gamma}{\delta}}$ as a coproduct of
    $\position{\gamma}$ and $\position{\delta}$.
  \end{enumerate}
\end{definition}

\noindent
A scope may be seen as “a context, without the type expressions”: in raw syntax, one cares about what variables are in scope, without yet caring about their types.

The singleton scope $\singletonscope$ is not needed for most of the development of general type theories --- in the present paper, it is used only for sequential contexts (\cref{sec:sequential-contexts}) and notions building on these.
However, it is present in all examples of interest, so we include it in the general definition.

We will also use scopes to describe \emph{binders}:
if some primitive symbol $S$ binds $\gamma$ variables in its $i$-th argument, then for an instance of $S$ in scope~$\delta$, the $i$-th argument will be an expression in scope $\sumscope{\delta}{\gamma}$.
Most traditional constructors bind finitely many variables; to facilitate this, we let $\numscope{n}$ denote the sum of $n \in \N$ copies of $\singletonscope$, which also provides alternative notations $\numscope{0}$ and $\numscope{1}$ for the empty and singleton scopes, respectively.

\begin{example} \label{ex:de-bruijn-scope-systems}
  \defemph{De Bruijn indices} and \defemph{de Bruijn levels} can be seen as scope systems, with $\N$ as the set of scopes, $\position{n} \defeq \{0, 1, \ldots, n-1\}$, $\emptyscope \defeq 0$, and $\sumscope{m}{n} \defeq m + n$.

  The difference lies just in the choice of coproduct inclusions $\inlscope{} : \position{m} \to \position{m + n} \leftarrow \position{n} : \inrscope{}$.
  Setting $\inlscope(i) \defeq i + n$, $\inrscope(j) = j$ gives de Bruijn \emph{indices}, as going under a binder increments the variables outside it.
  Setting $\inlscope(i) \defeq i$, $\inrscope(j) = j + n$ gives de Bruijn \emph{levels}, as higher positions go to the innermost-bound variables.
  Over these scope systems, our syntax precisely recovers standard de Bruijn-style syntax, as in \citep{deBruijn:Lambda:1972} and subsequent work.
\end{example}

Both the de Bruijn scope systems are \emph{strict} in the sense that we have equalities $\sumscope{\gamma}{\emptyscope} = \gamma = \sumscope{\emptyscope}{\gamma}$ and $\sumscope{(\sumscope{\gamma}{\delta})}{\eta} = \sumscope{\gamma}{(\sumscope{\delta}{\eta})}$, whereas general scope systems provide only canonical isomorphisms.
The equalities help reduce bureaucracy in several proofs, so we shall occasionally indulge in assuming that we work with a strict scope system.
The doubtful reader may consult the formalisation, which makes no such assumptions.
%

\begin{example}
  The \defemph{finite sets} system takes scopes to be finite sets, along with any choice of coproducts.
  It may be prudent to restrict to a small collection, say the hereditarily finite sets.
  Syntax over these scope systems gives a concrete implementation of categorical approaches such as \citep{fiore-plotkin-turi}.
\end{example}

\begin{example}
  Scope systems are not intrinsically linked to dependent type theories, but provide a useful discipline for syntax of other systems.
  For instance, in geometric logic, the infinitary disjunction is usually given with a side condition that the free variables of the disjunction must remain finite \cite[D1.1.3(xi)]{johnstone:elephant-ii}.
  By using finite scopes, we can make the finiteness condition explicit from the start, and dispense with the side condition.
  This is similar in spirit to~\citep{fiore-plotkin-turi} and more closely mirrors the categorical semantics.
  By contrast, the classical Hilbert-type logics $\mathcal{L}_{\alpha,\beta}$ of \citep{karp:1964-book} allow genuinely infinite contexts and binders, which can be obtained by taking scopes to be ordinals $\delta \in \alpha$, with $\position{\delta} \defeq \delta$.
\end{example}

\begin{example}
  The traditional syntax using named variables, for both free and bound variables, is \emph{not} an example of a scope system.
  In that approach, a fresh variable is not introduced by summing with a scope, but rather by a multivalued map allowing extension by \emph{any} unused name.
\end{example}

Some implementations of syntax treat free and bound variables separately, for instance \emph{locally nameless syntax}~\citep{mckinna93:_pure_type_system_formal} uses concrete names for free variables but de Bruijn indices for bound variables.
Scope systems as defined here do not subsume such syntax, but could be generalised to do so.

Categorically, a scope system can be viewed precisely as a category~$\Scope$ with initial and terminal objects, binary coproducts, and a full and faithful functor into $(\Set, 0, +, 1)$ preserving this structure.
The categorical structure is induced from $\Set$: morphisms $\gamma \to \delta$ are taken as functions $\position{\gamma} \to \position{\delta}$ --- we call these \defemph{renamings}.
Two of these, $r : \gamma \to \delta$ and $r' : \gamma' \to \delta'$, give a sum map $\sumscope{r}{r'} : \sumscope{\gamma}{\gamma'} \to \sumscope{\delta}{\delta'}$ arising from the universal property of coproducts.

For the remainder of the paper, we fix a scope system. To make concrete examples readable, we shall write them using the de Bruijn scopes from \cref{ex:de-bruijn-scope-systems}. They can be easily adapted to any other scope system.

\subsection{Arities and signatures}
\label{sec:arities-signatures}

While the arity of a simple algebraic operation is just the number of its arguments, the situation is complicated in type theory by the presence of \emph{binders}. Each argument of a type-theoretic constructor may be a term or a type, and some of its variables may be bound by the constructor. We thus need a suitable notion of arity.

\begin{definition}
  \label{def:syntactic-class}\label{def:arity}%
  By \defemph{syntactic classes}, we mean the two formal symbols $\Ty$ and $\Tm$, representing \defemph{types} and \defemph{terms} respectively.
  An \defemph{arity}~$\alpha$ is a family of pairs $(c, \gamma)$ where $c$~is a syntactic class and $\gamma$ is a scope.
  We call the indices of~$\alpha$ \defemph{arguments} and write $\args \alpha$ for the index set of~$\alpha$.
  Thus, each argument $i \in \args \alpha$ has an associated syntactic class $\argclass{\alpha}{i}$ and a scope $\argbinder{\alpha}{i}$, which we call the \defemph{binder} associated with the argument~$i$, and we can write~$\alpha$ as $\alpha = \famtuple{(\argclass \alpha i, \argbinder \alpha i)}{i \in \args \alpha}$.
\end{definition}

\begin{example}
  In Martin-Löf type theory with the de Bruijn scope system, the constructor $\symPi$ has arity $[(\Ty,0),(\Ty,1)]$.
  That is, the arity has two type arguments, and binds one variable in the second argument.
  A simpler example is the successor symbol in arithmetic whose arity is $[(\Tm,0)]$, because it takes one term argument and binds nothing.
  Still simpler, the arity of a constant symbol is the empty family.
\end{example}

Note that arities express only the basic syntactic information and do not specify the types of term arguments and bound variables, which will be encoded later by typing rules.

\begin{definition}
  \label{def:signature}%
  A \defemph{signature} $\Sigma$ is a family of pairs of a syntactic class and an arity.
  We call the elements of its index set \defemph{symbols}. Thus each symbol $S \in \Sigma$ has an associated syntactic class $\class{S}$ and an arity $\arity{S}$.
  A \defemph{type symbol} is one whose associated syntactic class is~$\Ty$, and a \defemph{term symbol} is one whose associated syntactic class is~$\Tm$.
  The \defemph{arguments} $\args S$ of~$S$ are the arguments of its arity $\arity{S}$. Each argument $i \in \args S$ has an associated syntactic class $\argclass{S}{i}$ and binder $\argbinder{S}{i}$, as prescribed by the arity~$\arity{S}$.
\end{definition}

\begin{example} \label{ex:pi-types-arities}
  The following signature describes the usual constructors for dependent products:
 \begin{align*}
   \symPi & \mapsto (\Ty, [(\Ty,0),(\Ty,1)]), \\
   \symlambda & \mapsto (\Tm, [(\Ty,0), (\Ty,1), (\Tm,1)]), \\
   \symapp & \mapsto (\Tm, [(\Ty,0), (\Ty,1), (\Tm,0),(\Tm,0)]).
 \end{align*}
 Let us spell out the last line. The symbol $\symapp$ builds a term, because its syntactic class is~$\Tm$, from four arguments. The first and the second arguments are types, with one variable bound in the second argument, while the third and the fourth arguments are terms. We thus expect an application term to be written as $\symapp(A, B, s, t)$, with one variable getting bound in~$B$.
\end{example}

\begin{definition}
  \label{def:signature-map}%
  A \defemph{signature map} $F : \Sigma \to \Sigma'$ is a map of families between them, that is, a function from symbols of $\Sigma$ to symbols of $\Sigma'$, preserving the arities and syntactic classes.
  Signatures and maps between them form a category~$\Sig$.
\end{definition}

\subsection{Raw syntax}

Once a signature $\Sigma$ is given, we know how to build type and term expressions over it. We call this part of the setup ``raw'' syntax to emphasise its purely syntactic nature.

\begin{definition}
  \label{def:raw-syntax}%
  The \defemph{raw syntax} over $\Sigma$ consists of the collections of \defemph{raw type expressions} $\ExprTy{\Sigma}{\gamma}$ and \defemph{raw term expressions} $\ExprTm{\Sigma}{\gamma}$, which are generated inductively for any scope~$\gamma$ as follows:
  \begin{enumerate}
  \item for every position $i \in \position{\gamma}$, there is a \defemph{variable} expression $\synvar{i} \in \ExprTm{\Sigma}{\gamma}$;
  \item for every symbol $S \in \Sigma$ of syntactic class $\class{S}$, and a map
    \begin{equation*}
      \textstyle
      e \in \prod_{i \in \args S} \Expr{\argclass{S}{i}}{\Sigma}{\sumscope{\gamma}{\argbinder{S}{i}}},
    \end{equation*}
    there is an expression $S(e) \in \Expr{\class{S}}{\Sigma}{\gamma}$, the \defemph{application} of~$S$ to arguments~$e$.
  \end{enumerate}
\end{definition}

Let us walk through the definition.
The first clause states that the positions of~$\gamma$ play the role of available variable names, still without any typing information.
The second clause explains how to build an expression with a symbol~$S$: for each argument $i \in \args S$, an expression $e_i$ of a suitable syntactic class must be provided, where $e_i$ may refer to variable names given by~$\gamma$ as well as the variables that are bound by~$S$ in the $i$-th argument.
The expressions $e_i$ are conveniently collected into a function~$e$.
When writing down concrete examples we write the arguments as tuples.

\begin{example}%
  \label{ex:symbol-pi-arity}
  The symbol $\symPi$ has arity $[(\Ty,0),(\Ty,1)]$.
  So if $A$ is a type expression with free variables amongst $\sumscope{\gamma}{\emptyscope}$ (which is isomorphic to $\gamma$), and $B$ is a type expression with free variables in $\sumscope{\gamma}{1}$ (which has an extra free variable available), then $\synPi[A,B]$ is a type expression with free variables in $\gamma$.
\end{example}

\begin{definition}
  \label{renaming-action}%
  The \defemph{action of a renaming} $r : \gamma \to \delta$ on expressions is the map $\rename{r} : \Expr{c}{\Sigma}{\gamma} \to \Expr{c}{\Sigma}{\delta}$, defined by structural recursion:
  \begin{align*}
    \rename{r} (\synvar{i}) &\defeq \synvar{r(i)}, \\
    \rename{r} (S(e)) &\defeq
      S(\tuple{\rename{(\sumscope{r}{\idmap[\argbinder{S}{i}]})}(e_i)}{i \in \args{S}}).
  \end{align*}
\end{definition}

\noindent
Note how the definition uses the functorial action of~$\sumscope{}{}$ to extend the renaming~$r$ when it descends under the binders of a symbol.

\begin{definition}
  \label{def:signature-map-action}%
  The \defemph{action of a signature map} $F : \Sigma \to \Sigma'$ on expressions is the map $\act{F} : \Expr{c}{\Sigma}{\gamma} \to \Expr{c}{\Sigma'}{\gamma}$, defined by structural recursion:
  \begin{align*}
    \act{F} (\synvar{i}) & \defeq \synvar{i}, \\
    \act{F} (S(e)) &\defeq F(S)(\act{F} \circ e).
  \end{align*}
\end{definition}

\begin{propositionwithqed}
  \label{prop:commutation-renaming-signature-map}%
  The actions by renamings and signature maps commute with each other, and respect identities and composition.
  That is, they make expressions into a functor 
  $\Expr{}{}{} : \Sig \times \Scope \to \Set^{\set{\Ty, \Tm}}$. \qedhere
\end{propositionwithqed}

\subsection{Substitution}
\label{sec:raw-substitutions}

We next spell out substitution as an operation on raw expressions, and note its basic properties.

\begin{definition}
  \label{def:raw-substitution}%
  A \defemph{raw substitution} $f : \gamma \to \delta$ over a signature $\Sigma$ is a map $f : \position{\delta} \to \ExprTm{\Sigma}{\gamma}$.
The \defemph{extension $\sumscope{f}{\eta} : \sumscope{\gamma}{\eta} \to \sumscope{\delta}{\eta}$ by a scope~$\eta$} is the substitution
\begin{align*}
  (\sumscope{f}{\eta})(\inlscope(i)) &\defeq \act{{\inlscope}}(f(i)) & &\text{if $i \in \position{\delta}$}, \\
  (\sumscope{f}{\eta})(\inrscope(j)) &\defeq \synvar{\inrscope{(j)}} & &\text{if $j \in \position{\eta}$}.
\end{align*}
The (contravariant) \defemph{action of $f$ on an expression $e \in \Expr{c}{\Sigma}{\delta}$} gives the expression $\tca{f} e \in \Expr{c}{\Sigma}{\gamma}$, as follows:
\label{def:raw-substitution-action}
\begin{align*}
  \tca{f} (\synvar{i}) &\defeq f(i), \\
  \tca{f} (S (e)) &\defeq S(\fammap{\tca{(\sumscope{f}{\argbinder{S}{i}})} e_i}{i \in \arg S}).
\end{align*}
\end{definition}

\begin{example}
  \label{ex:de-bruijn-substitution}%
  The above definition, specialized to the de Bruijn scope systems of \cref{ex:de-bruijn-scope-systems}, precisely recovers the usual definition of substitution with de Bruijn indices or levels.
  Their explicit shift operators are abstracted away, in our setup, as renaming under coproduct inclusions.
\end{example}

\begin{definition}
  \label{def:variable-renaming}
  Any renaming $r : \gamma \to \delta$ induces a substitution $\bar{r} : \delta \to \gamma$, with $\bar{r}(i) \defeq \synvar{r(i)}$.
  In particular, each scope $\delta$ has an \defemph{identity substitution} $\idmap[\delta] : i \mapsto \synvar{i}$.
  Substitutions $f : \gamma \to \delta$ and $g : \delta \to \theta$ may be \defemph{composed} to give a substitution $g \circ f : \gamma \to \theta$, defined by $(g \circ f)(k) \defeq \tca{f}(g(k))$.
\end{definition}

We often write $r$ instead of $\bar{r}$, a slight notational abuse grounded in the next proposition.

\begin{proposition}
  For all suitable renamings $r$, substitutions $f$, $g$, and expressions $e$:
  \begin{enumerate}
  \item Substitution generalises renaming: $\tca{\bar{r}}e = \act{r}e$.
  \item Identity substitution is trivial: $\tca{\idmap[\delta]}e = e$.
  \item Substitution commutes with renaming:
    \begin{equation*}
      \act{r}(\tca{f}e) = \tca{(i \mapsto \act{r}f(i))} e
      \qquad\text{and}\qquad
      \tca{f}(\act{r}e) = \tca{(i \mapsto f(r(i)))} e.
    \end{equation*}

  \item Substitution respects composition: $\tca{f} (\tca{g} e) = \tca{(g \circ f)}$.
  \item Composition of substitutions is unital and associative:
    \begin{equation*}
      f = \idmap \circ f = f \circ \idmap
      \qquad\text{and}\qquad
      f \circ (g \circ h) = (f \circ g) \circ h.
    \end{equation*}
  \end{enumerate}
\end{proposition}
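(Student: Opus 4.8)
The plan is to prove parts (1)--(4) by structural induction on the expression $e$ (simultaneously quantifying over the renamings and substitutions involved, so that the induction hypotheses are available for each choice of argument), and then to read off part (5) pointwise from parts (2) and (4). In every one of these inductions the variable case $e = \synvar{i}$ is immediate by unfolding the definitions of $\tca{(-)}$, $\act{(-)}$, and the substitutions $\bar{r}$ and $\idmap$; all the content lies in the symbol case $e = S(\fammap{e_i}{i \in \args S})$. There the operations descend under the binders $\argbinder{S}{i}$ via the extension $\sumscope{(-)}{\eta}$, so each part reduces to a matching \emph{extension lemma} asserting that $\sumscope{(-)}{\eta}$ commutes with the operation at hand; granting the extension lemma, the symbol case closes by applying the induction hypothesis argument-by-argument to the $e_i$ (over the extended scope $\sumscope{\gamma}{\argbinder{S}{i}}$).

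The extension lemmas are statements about substitutions, i.e.\ about functions on positions, so they are proved not by induction on expressions but by case analysis on whether a position of $\sumscope{\gamma}{\eta}$ lies in the image of $\inlscope$ or of $\inrscope$, using the defining coproduct equations for the sum map together with the renaming functoriality of \cref{prop:commutation-renaming-signature-map}. Concretely: for (1) the lemma is $\sumscope{\bar{r}}{\eta} = \overline{\sumscope{r}{\idmap[\eta]}}$, whose two position-cases follow from $\act{\inlscope}(\synvar{r(i)}) = \synvar{\inlscope(r(i))}$ and $(\sumscope{r}{\idmap[\eta]}) \circ \inlscope = \inlscope \circ r$. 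Part (2) is then the special case $r = \idmap$, since $\overline{\idmap} = \idmap$ and $\sumscope{\idmap}{\idmap} = \idmap$, giving $\sumscope{\idmap[\delta]}{\eta} = \idmap[\sumscope{\delta}{\eta}]$. The two commutation identities of (3) require the analogous extension lemmas for the mixed composites $i \mapsto \act{r}f(i)$ and $i \mapsto f(r(i))$; these are self-contained, resting only on the coproduct naturality of $\sumscope{r}{\eta}$ and renaming functoriality, with no appeal to (4).

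For part (4) the governing extension lemma is that $\sumscope{(-)}{\eta}$ respects composition,
\begin{equation*}
  \sumscope{(g \circ f)}{\eta} = (\sumscope{g}{\eta}) \circ (\sumscope{f}{\eta}).
\end{equation*}
On the $\inrscope$-positions both sides reduce to $\synvar{\inrscope(j)}$, but on the $\inlscope$-positions one must show $\act{\inlscope}(\tca{f}(g(k))) = \tca{(\sumscope{f}{\eta})}(\act{\inlscope}(g(k)))$, and this is precisely an instance of the commutation-with-renaming identities already established in (3) (apply the first identity with $r = \inlscope$, then the second, and compare using $(\sumscope{f}{\eta})(\inlscope(i)) = \act{\inlscope}(f(i))$). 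This is the crux of the whole argument and the main obstacle: it is what forces the dependency order, requiring that (3) be proved before (4). With (4) in hand, part (5) is routine and pointwise --- $(\idmap \circ f)(k) = \tca{f}(\synvar{k}) = f(k)$ directly, $(f \circ \idmap)(k) = \tca{\idmap}(f(k)) = f(k)$ by (2), and associativity $\tca{f}(\tca{g}(h(k))) = \tca{(g \circ h)}(f(k))$ on each position by (4). The only genuine difficulty throughout is the bureaucratic bookkeeping of how $\sumscope{(-)}{\eta}$ threads through binders and getting the order of the four extension lemmas right; once those are isolated, each inductive step is a mechanical unfolding.
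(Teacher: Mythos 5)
Your proposal is correct and takes exactly the route the paper intends: the paper's proof is the one-line remark that everything follows ``by structural induction on expressions, as in the standard proofs for de Bruijn syntax,'' and your write-up is a faithful expansion of that, correctly isolating the extension lemmas for $\sumscope{(-)}{\eta}$ as the content of each symbol case and correctly identifying that part (3) must precede part (4) because the $\inlscope$-case of $\sumscope{(g \circ f)}{\eta} = (\sumscope{g}{\eta}) \circ (\sumscope{f}{\eta})$ is an instance of the renaming--substitution commutation identities. (The only blemish is a harmless relabelling slip in your parenthetical for associativity, where the left-hand side should read $\tca{h}(\tca{g}(f(k)))$.)
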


\begin{proof}
  All direct by structural induction on expressions, as in the standard proofs for de Bruijn syntax. 
\end{proof}

The interaction of substitutions with signature maps is similarly straightforward: signature maps act functorially on raw substitutions, and the constructions of this section are natural with respect to the action. More precisely:

\begin{propositionwithqed}
  Given a signature map $F : \Sigma \to \Sigma'$, and a raw substitution $f : \delta \to \gamma$ over $\Sigma$, there is a raw substitution $\act{F} f : \delta \to \gamma$ over $\Sigma'$ given by $(\act{F} f)(i) = \act{F}(f(i))$, and the action respects composition and identities in $F$.
  Moreover, given such $F$, for all suitable $f$ and $e$, we have $\act{F}(\tca{f} e) = \tca{(\act{F}f)} (\act{F}e)$; similarly, for all suitable $f$, $g$, we have $\act{F}(f \circ g) = \act{F}f \circ \act{F}g$. 
\end{propositionwithqed}

\subsection{Metavariable extensions and instantiations}
\label{sec:metas-and-instantiations}

As mentioned in the introduction, when we write down the \emph{rules} of type theories, we will need to extend the ambient signature~$\Sigma$ by new symbols to represent the metavariables of the rule.

For instance, consider the constructor $\symPi$, with arity $[(\Ty,0),(\Ty,1)]$.
In the rule for~$\symPi$ formation (\cref{ex:symbol-pi-arity}), we shall use two new symbols $\symA$, $\symB$, corresponding to the arguments of~$\symPi$ in the premises of the rule.
The classes and arities of these new symbols are given by their classes and binders as arguments of~$\symPi$: they are both type symbols; the first one takes no arguments, and the second one takes one term argument.

\begin{definition}
  \label{def:simple-arity}%
  The \defemph{simple arity~$\simplearity \gamma$} of a scope~$\gamma$ is the arity indexed by the positions~$\position{\gamma}$, and whose arguments all have syntactic class $\Tm$ with no binding, i.e., $\simplearity \gamma \defeq \famtuple{(\Tm, \emptyscope)}{i \in \position{\gamma}}$.
\end{definition}

\begin{definition}
  \label{def:metavariable-extensions}%
  The \defemph{metavariable extension $\mvextend{\Sigma}{\alpha}$ of~$\Sigma$ by arity $\alpha$} is the signature indexed by $\famindex{\Sigma} + \args{\alpha}$, defined by
  \begin{equation*}
    (\mvextend{\Sigma}{\alpha})_{\inl(S)} \defeq \Sigma_S
    \qquad\text{and}\qquad
    (\mvextend{\Sigma}{\alpha})_{\inr(i)} \defeq (\argclass{\alpha}{i}, \simplearity (\argbinder{\alpha}{i})).
  \end{equation*}
  We usually treat the injection of symbols $\Sigma \to \mvextend{\Sigma}{\alpha}$ as an inclusion, writing $S$ instead of $\inl(S)$;
  and for each $i \in \args{\alpha}$, we write $\synmeta{i}$ for the corresponding new symbol $\inr(i)$ of $\mvextend{\Sigma}{\alpha}$, the \defemph{metavariable symbol} for $i$.
\end{definition}

\begin{example}
  The symbol $\symlambda$ has arity
  $
    [(\Ty,0), (\Ty,1), (\Tm,1)]
  $.
  It thus gives rise to the metavariable extension $\Sigma + [(\Ty,0), (\Ty,1), (\Tm,1)]$, which adjoins to~$\Sigma$ three metavariable symbols $\synmeta{0}$, $\synmeta{1}$, $\synmeta{2}$, which for readability we may rename to $\symA$, $\symB$, $\symb{t}$, with classes and arities $(\Ty, [])$, $(\Ty, [(\Tm, 0)])$, and $(\Tm, [(\Tm,0)])$ respectively.
  That is, $\symA$ and $\symB$ are type symbols and $\symb{t}$ a term symbol, with the latter two each taking a term argument.
  These will then appear in the rule for $\symlambda$, to denote the arguments of a generic instance of $\symlambda$.
  We will often use more readable names for metavariable symbols, as here, without further comment.
\end{example}

Let $\gamma$ be a scope and $\synmeta{i}(e)$ a raw expression over signature $\mvextend{\Sigma}{\alpha}$ and~$\gamma$.
Then~$e$ is a map which assigns to each $j \in \position{\argbinder{\alpha}{i}}$ an expression in $\ExprTm{\mvextend{\Sigma}{\alpha}}{\gamma}$ because $\sumscope{\gamma}{\emptyscope} = \gamma$.
Thus we may construe $e$ as a raw substitution $e : \argbinder{\alpha}{i} \to \gamma$.
With this in mind, the following definition explains how metavariables are replaced with expressions.

\begin{definition}
  \label{def:instantiation}%
  Given a signature~$\Sigma$, an arity $\alpha$, and a scope~$\gamma$, an \defemph{instantiation~$I \in \Inst \Sigma \gamma \alpha$} of $\alpha$ in scope $\gamma$ is a family of expressions $I_i \in \Expr{\argclass{i}{\alpha}}{\Sigma}{\sumscope{\gamma}{\argbinder{i}{\alpha}}}$, for each $i \in \args{\alpha}$.
  Such an instantiation acts on expressions $e \in \Expr{c}{\mvextend{\Sigma}{\alpha}}{\delta}$ to give expressions $\act{I} e \in \Expr{c}{\Sigma}{\sumscope{\gamma}{\delta}}$, by
replacing each occurrence of $\synmeta{i}$ in $e$ with a copy of~$I_i$, with the arguments of $\synmeta{i}$ recursively substituted for the corresponding variables in $I_i$:
\begin{align*}
  \act{I} (\synmeta{i}(e)) &\defeq \tca{(\sumscope{\gamma}{e})} I_i, \\
  \act{I} (\synvar{j})     &\defeq \synvar{\inr(j)}, \\
  \act{I} (S(e))           &\defeq S \, (\fammap{\act{I} e_j}{j \in \args{S}}).
\end{align*}
  We call $\act{I}e$ the \defemph{instantiation of $e$ with $I$}.
\end{definition}

\begin{example} \label{ex:app-instantiation}
  Anticipating \cref{ex:raw-rule-app}, the rule for function application will be written as follows, where for readability $x$ stands for $\synvar{0}$:
  \begin{equation*}
  \inferrule
    { \istype{ }{\symA} \\
      \istype{x \of \symA}{\symB(x)} \\
      \isterm{}{\symb{s}}{\symPi(\symA, \symB(x)}) \\
      \isterm{}{\symb{t}}{\symA}
    }{
      \isterm{}{\symapp(\symA, \symB(x), \symb{s}, \symb{t})}{\symB(\symb{t})}
    }
  \end{equation*}
  All expressions are in the metavariable extension $\mvextend{\Sigma}{\arity{\symapp}}$, where $\Sigma$ is some ambient signature including $\symPi$ and $\symapp$, and $\arity{\symapp} = [(\Ty,0), (\Ty,1), (\Tm,0),(\Tm,0)]$ as in \cref{ex:pi-types-arities}.
  The symbols $\symA$, $\symB$, $\symb{s}$, $\symb{t}$ are the metavariable symbols of this extension.
  An instantiation of $\arity{\symapp}$ in scope $\gamma$ consists of expressions $A \in \Expr{\Ty}{\Sigma}{\gamma}$, $B \in \Expr{\Ty}{\Sigma}{\sumscope{\gamma}{\singletonscope}}$, and $s, t \in \Expr{\Tm}{\Sigma}{\gamma}$.
  Instantiating the conclusion with these expressions, over some context $\Gamma$, gives the judgement $\isterm{\Gamma}{\symapp(A, B, s, t)}{B[t/x]}$.
\end{example}

Building on the above, instantiations also act on other objects built out of expressions, including substitutions and other instantiations;
at the same time, being themselves syntactic objects, instantiations are acted upon by substitutions and signature maps;
and all of these are suitably natural and functorial, as follows.

\begin{definition}
  \label{def:instantiation-actions}%
  Given an instantiation $I \in \Inst{\Sigma}{\gamma}{\alpha}$:
  \begin{enumerate}

  \item The \defemph{instantiation $I$ acts on a substitution} $f : \delta' \to \delta$ over $\mvextend{\Sigma}{\alpha}$ to give a substitution $\act{I} f : \sumscope{\gamma}{\delta'} \to \sumscope{\gamma}{\delta}$, defined by
    \begin{align*}
      (\act{I}f)(\inlscope i) & \defeq \inlscope i & (i \in \gamma), \\
      (\act{I}f)(\inrscope j) & \defeq \act{I}(f j) & (j \in \delta).
    \end{align*}

  \item The \defemph{instantiation $I$ acts on an instantiation} $J \in \Inst{\mvextend{\Sigma}{\alpha}}{\delta}{\beta}$ to give an instantiation $\act{I}J \in \Inst{\Sigma}{\sumscope{\gamma}{\delta}}{\beta}$, defined by $(\act{I}J)_j \defeq \act{I}(J_j)$.  

  \item A \defemph{substitution} $f : \delta \to \gamma$ over $\Sigma$ \defemph{acts on the instantiation~$I$} to give an instantiation $\tca{f} I \in \Inst{\Sigma}{\delta}{\alpha}$, defined by $(\tca{f}I)_i \defeq \tca{(\sumscope{f}{\argbinder{\alpha}{i}})}I_i$.

  \item The \defemph{instantiation $I$ is translated along a signature map $F : \Sigma \to \Sigma'$} to give an instantiation $\act{F} I \in \Inst{\Sigma'}{\gamma}{\alpha}$, defined by $(\act{F}I)_i \defeq \act{F}(I_i)$.
  \end{enumerate}
\end{definition}

\begin{propositionwithqed}
  \label{prop:instantiation-boilerplate}%
  For all suitable instantiations $I$, $J$, signatures maps $F$, $G$, substitutions $f$, $g$, arities $\alpha$, expressions $e$, and scopes $\gamma$, $\delta$, $\theta$:
  \begin{enumerate}

  \item
    Translation along signature maps is functorial:
    %
    \begin{equation*}
       \act{(G \circ F)} I = \act{G} (\act{F} I)
      \qquad\text{and}\qquad
      \act{{\idmap[\Sigma]}} I = I.
    \end{equation*}

  \item
    The actions of substitutions and instantiations on expressions and on each other are natural with respect to translation along signature maps:
    %
    \begin{align*}
      \act{F}(\act{I} e) & = \act{(\act{F} I)} (\act{(\mvextend{F}{\alpha})} e), \\
      \act{F}(\act{I} f) & = \act{(\act{F} I)} (\act{(\mvextend{F}{\alpha})} f), \\
      \act{F}(\act{I} J) & = \act{(\act{F} I)} (\act{(\mvextend{F}{\alpha})} J), \\
      \act{F}(\tca{f} I) & = \tca{(\act{F} f)} (\act{F} I).
    \end{align*}

  \item
    The action of substitutions on instantiations is functorial:
    \begin{equation*}
      \tca{(f \circ g)} I = \tca{g} (\tca{f} I)
      \qquad\text{and}\qquad
      \tca{{\idmap[\gamma]}} I = I.
    \end{equation*}

  \item
    The action of instantiations is natural with respect to substitutions:
    \begin{align*}
      \act{(\tca{f} I)} e & = \tca{(\sumscope{f}{\delta})} (\act{I} e), \\
      \act{I} (\tca{g} e) &= \tca{(\act{I} g)} (\act{I} e).
    \end{align*}

  \item
    The action of instantiations is “associative” in the sense that
    \begin{equation*}
      \act{(\act{I} J)} e = \act{I}(\act{J} (\act{(\mvextend{\inl}{\beta})} e))
    \end{equation*}
    holds modulo the canonical associativity isomorphism between the scopes $\sumscope{(\sumscope{\gamma}{\delta})}{\theta}$ and $\sumscope{\gamma}{(\sumscope{\delta}{\theta})}$ of the left- and right-hand sides. \qedhere
  \end{enumerate}
\end{propositionwithqed}

The above properties, while routine to prove, are a lot of boilerplate.
They can be incorporated, if desired, into the statement that syntax forms a \emph{scope-graded monad} on signatures in the sense of~\citep{orchard-wadler-eades}, and that instantiations are certain Kleisli maps for this monad.
As ever, however, we emphasise in this paper the elementary viewpoint, rather than categorical abstraction.


\section{Raw type theories}
\label{sec:typing}

Having described raw syntax, we proceed with the formulation of raw type theories. These hold enough information to prevent syntactic irregularities, and can be used to specify derivations and derivability, but are qualified as ``raw'' because they allow arbitrariness and abnormalities that are generally considered undesirable.

\subsection{Raw contexts}

\begin{definition}
  \label{def:raw-context}%
  A \defemph{raw context~$\Gamma$} is a scope~$\gamma$ together with a family on $\ExprTy{\Sigma}{\gamma}$ indexed by $\position \gamma$, i.e., a map $\position{\gamma} \to \ExprTy{\Sigma}{\gamma}$.
  The positions of $\gamma$ are also called the \defemph{variables} of~$\Gamma$.
  We often write just~$\Gamma$ for~$\gamma$, e.g., $i \in \position{\Gamma}$ instead of $i \in \position{\gamma}$, and $\Expr{c}{\Sigma}{\Gamma}$ instead of $\Expr{c}{\Sigma}{\gamma}$.
  We use a subscript for the application of a context $\Gamma$ to a variable $i$, such that $\Gamma_i$ is the type expression at index $i$.
\end{definition}

This definition is somewhat non-traditional for dependent type theories, in a couple of ways.
Contexts are more usually defined as lists, so their variables are ordered, and the type of each variable is assumed to depend only on earlier variables, i.e.~$A_i \in \ExprTy{\Sigma}{i}$.
In our definition, the variables form an arbitrary scope, with no order assumed; and each type may \emph{a priori} depend on any variables of the context.

One may describe the two approaches as \emph{sequential} and \emph{flat} contexts, respectively.
The flat notion contains all the information needed when contexts are \emph{used} in derivations; we view the sequentiality as extra information that may be provided later by a derivation of well-formedness of a context, cf.\ \cref{sec:sequential-contexts}, but that is not needed at the raw level.

\begin{example}%
  \label{ex:de-bruijn-context}
  With the de Bruijn index scope system, a raw context $\Gamma$ of scope $n$ may be written as
  $\famtuple{A_i \in \ExprTy \Sigma n}{i \in \position n}$. A more familiar way to display $\Gamma$ is as list $[(n-1) \mto A_{n-1}, \ldots, 0 \mto A_0]$, where each $A_i \in \ExprTy{\Sigma}{n}$, but as raw contexts follow the flat approach, we should not think of this as imposing an order on the variables, and hence the list $[0 \mto A_0, \ldots, (n-1) \mto A_{n-1}]$ denotes the same context $\Gamma$.
  Note, by the way, that at this stage contexts on de Bruijn indices are indistinguishable from contexts on de Bruijn levels.
  The difference becomes apparent once we consider context extension, and the scope coproduct inclusions come into play.
\end{example}

\begin{definition}%
  \label{def:context-extension}%
  Let $\Gamma$ be a raw context, $\delta$ be a scope, and $\Delta : \position{\delta} \to \ExprTy{\Sigma}{\sumscope {\position{\Gamma}} {\delta}}$ a family of expressions indexed by $\position \delta$.
  The \defemph{context extension}~$\ctxextend \Gamma \Delta$ is the raw context of scope~$\sumscope {\position \Gamma} \delta$, defined as
  \begin{equation*}
    (\ctxextend \Gamma \Delta)_{\inlscope j} \defeq (\act \inlscope \circ\, \Gamma)_j
    \qquad\text{and}\qquad
    (\ctxextend \Gamma \Delta)_{\inrscope k} \defeq \Delta_k.
  \end{equation*}
  In other words, the extended raw context~$\ctxextend \Gamma \Delta$ is the map $[\act \inlscope \circ\, \Gamma, \Delta]$ induced by the universal property of the coproduct~$\position{\sumscope {\position{\Gamma}} \delta}$.
\end{definition}

\begin{example}
To continue \cref{ex:de-bruijn-context}, we can consider how context extension works for de Bruijn indices. Let $\Gamma = [2 \mto A_2, 1 \mto A_1, 0 \mto A_0]$, where each $A_i \in \ExprTy{\Sigma}{3}$, and $\Delta = [1 \mto B_1, 0 \mto B_0]$ with $B_j \in \ExprTy{\Sigma}{3 + 2}$. The coproduct inclusions are $\inlscope i = i + 2$ and $\inrscope j = j$. The context $\ctxextend \Gamma \Delta$ has scope $3 + 2 = 5$, and is given by $[\act {(i \mapsto i + 2)} \circ\, \Gamma, \Delta]$, which computes to $[4 \mto \act \inlscope A_2, 3 \mto \act \inlscope A_1, 2 \mto \act \inlscope A_0, 1 \mto B_1, 0 \mto B_0]$. The effect is that the variables from $\Gamma$ are renamed according to the scope of $\Delta$, and the renaming acts on the associated type expressions accordingly, i.e., by shifting all variables by 2.
\end{example}

Note that with raw contexts, we weaken types when extending a context.
In approaches using sequential contexts with scoped syntax, weakening is instead performed when types are taken \emph{out} of a context: that is, the variable rule (precisely stated) concludes $\isterm{\Gamma}{\synvar{i}}{\rename{\iota}\Gamma_i}$, where $\iota : i \to n$ is the inclusion of an initial segment into the full context.

In concrete examples, we will write contexts in a more traditional style, as lists of variables names with their associated types:
\[  x_1 \of A_1, \ldots, x_n \of A_n. \]

Like other syntactic objects, raw contexts are acted upon by signature maps and instantiations. The functoriality and commutation properties for these actions follow directly from the corresponding properties for expressions.

\begin{definition}
  Given a signature map $F : \Sigma \to \Sigma'$ and a raw context $\Gamma$ over $\Sigma$, the \defemph{translation of $\Gamma$ by $F$} is the raw context $\act{F} \Gamma$ over $\Sigma'$, with $\position{\act{F} \Gamma} \defeq \position{\Gamma}$ and $(\act{F}\Gamma)_i \defeq \act{F}\Gamma_i$. 
\end{definition}

\begin{propositionwithqed}
  The action of signature maps on raw contexts is functorial:
  \begin{equation*}
    \act{F}(\act{G} \Gamma) = \act{(F \circ G)}\Gamma
    \qquad\text{and}\qquad
    \act{{\idmap[\Sigma]}} \Gamma = \Gamma,
  \end{equation*}
  for all suitable $F$, $G$, $\Gamma$.
\end{propositionwithqed}

The action of instantiations is a little more subtle.
Acting pointwise on the expressions of the context  is not enough, since the instantiated expressions lie in a larger scope.
We need to supply extra types for the extra scope, i.e., the scope of the instantiation must itself underlie a raw context.

\begin{definition}
  An \defemph{instantiation $I$ in context $\Gamma$} over $\Sigma$ for arity $\alpha$ is an instantiation $I \in \Inst{\Sigma}{\position \Gamma}{\alpha}$.
  Then for any raw context $\Delta$ over $\mvextend{\Sigma}{\alpha}$, the \defemph{context instantiation $\act{(I,\Gamma)}\Delta$} is the context over $\Sigma$ with scope $\sumscope{\position{\Gamma}}{\position{\Delta}}$ and with type expressions
  \begin{align*}
    \act{(I,\Gamma)}\Delta_{\inlscope(i)} &\defeq \tca{\inlscope}\Gamma_i & (i \in \gamma), \\
    \act{(I,\Gamma)}\Delta_{\inrscope(j)}&\defeq \act{I}\Delta_j & (j \in \delta).
  \end{align*}

  Briefly, $\act{(I,\Gamma)}\Delta$ is the context extension of $\Gamma$ by the instantiations of the types of $\Delta$.
\end{definition}

\begin{propositionwithqed}
  Given instantiations 
  \begin{align*}
    &\text{$I \in \Inst{\Sigma}{\position \Gamma}{\alpha}$ in context $\Gamma$,}\\
    &\text{$K \in \Inst{\mvextend{\Sigma}{\alpha}}{\position \Delta}{\beta}$ in context $\Delta$, and}\\
    &\text{$\Theta \in \Inst{\mvextend{(\mvextend{\Sigma}{\alpha})}{\beta}}{\position \Theta}{\gamma}$ in context $\Theta$},
  \end{align*}
  the equation
  \begin{equation*}
    \act{(I,\Gamma)} (\act{(K,\Delta)} \Theta) =
    \act{(\act{I}K, \act{(I,\Gamma)} \Delta)} \Theta
  \end{equation*}
  holds modulo the canonical isomorphism between the scopes
  $\sumscope{\position \Gamma}{(\sumscope{\position \Delta}{\position \Theta})}$ and $\sumscope{(\sumscope{\position \Gamma}{\position \Delta})}{\position \Theta}$ of their right- and left-hand sides.
\end{propositionwithqed}

\subsection{Judgements}

Our type theories have four primitive judgement forms, following Martin-Löf \citep{martin-lof:bibliopolis}:
\begin{center}
\begin{tabular}{ll}
  $A \type$           &\qquad ``$A$ is a type'' \\
  $t : A$             &\qquad ``term $t$ has type $A$'' \\
  $A \judgeq B$       &\qquad ``$A$ and $B$ are equal as types''  \\
  $s \judgeq t : A$   &\qquad ``$s$ and $t$ are equal as terms of type $A$''
\end{tabular}
\end{center}
These are represented with symbols $\Ty$, $\Tm$, $\TyEq$, and $\TmEq$, respectively.
For our needs we need to describe the judgement forms quite precisely. In fact, the following elaboration may seem a bit \emph{too} precise, but we found it quite useful in the formalisation to make explicit all the concepts involved and distinctions between them.

Each judgement form has a family of \defemph{boundary slots} and possibly a \defemph{head slot}, where each slot has an associated syntactic class, as follows:
\begin{center}
\begin{tabular}{lll}
  Form & Boundary & Head \\
  \midrule
  $\Ty$ & $[]$ & $\Ty$ \\
  $\Tm$ & $[\Ty]$ & $\Tm$ \\
  $\TyEq$ & $[\Ty, \Ty]$ & \\
  $\TmEq$ & $[\Tm, \Tm, \Ty]$ & \\
\end{tabular}
\end{center}
The table encodes the familiar constituent parts of the judgement forms:
\begin{enumerate}
\item ``$A \type$'' has no boundary slots; the head slot, indicated by $A$, is a type.
\item ``$t : A$'' has one boundary type slot indicated by $A$, called the \defemph{underlying type}; the head, indicated by $t$, is a term.
\item ``$A \equiv B$'' has two type slots indicated by $A$ and $B$, called the \defemph{left-hand side} and \defemph{right-hand side}; there is no head.
\item ``$s \equiv t : A$'' has two term slots indicated by $s$ and $t$, and a type slot indicated by~$A$, called the \defemph{left-hand side}, the \defemph{right-hand side} and the \defemph{underlying type}, respectively; there is no head.
\end{enumerate}
The \defemph{slots} of a judgement form are the slots of its boundary, and the head, if present.

\begin{definition}
  \label{def:judgement}
  Given a raw context $\Gamma$ and a judgement form~$\phi$, a \defemph{hypothetical judgement} of that form over~$\Gamma$ is a map $J$ taking each slot of $\phi$ of syntactic class $c$ to an element of $\Expr{c}{\Sigma}{\Gamma}$.
  We write $\Judg{\Sigma}$ for the set of all hypothetical judgements over~$\Sigma$.
  The types of $\Gamma$ are the \defemph{hypotheses} and $J$ is the \defemph{thesis} of the judgement.
  When there is no ambiguity, we will (following traditional usage) speak of a \defemph{judgment} to mean either a whole hypothetical judgement $\Gamma \typesjudgement J$, or just a thesis $J$.
 
  A hypothetical judgement is an \defemph{object judgement} if it is a term or a type judgement, and an \defemph{equality judgement} if it is a type or a term equality judgement.
\end{definition}

\begin{example}
  The boundary and the head slots of the judgement form $\Tm$ are $[\Ty]$ and $\Tm$, respectively. Thus a hypothetical judgement of this form over a raw context~$\Gamma$ is a map taking the slot in the boundary to a type expression $A \in \ExprTy{\Sigma}{\Gamma}$ and the head slot to a term expression $t \in \ExprTm{\Sigma}{\Gamma}$.
  This corresponds precisely to the information conveyed by a traditional hypothetical term judgement ``$\isterm{\Gamma}{t}{A}$''.
\end{example}

In view of the preceding example we shall write a hypothetical judgement over $\Gamma$ given by a map $J$ in the traditional type-theoretic way
\begin{equation*}
  \Gamma \typesjudgement J
\end{equation*}
where the elements of $J$ are displayed in the corresponding slots.

Just like raw contexts, judgements are acted on by signature maps and instantiations.

\begin{definition} \label{def:judgements-functorial}
  Given a signature map $F : \Sigma \to \Sigma'$ and a judgement $\Gamma \typesjudgement J$ over $\Sigma$, the translation $\act{F}(\Gamma \typesjudgement J)$ is the judgement $\act{F} \Gamma \typesjudgement \act{F} J$ over $\Sigma'$ of the same form, where the thesis $\act{F} J$ is $J$ with $\act{F}$ applied pointwise to each expression.
\end{definition}

\begin{propositionwithqed} \label{prop:judgements-functorial}
  This action is functorial: $\act{F}(\act{G} (\Gamma \typesjudgement J)) = \act{(FG)}(\Gamma \typesjudgement J)$, and $\act{(\idmap[\Sigma])}(\Gamma \typesjudgement J) = (\Gamma \typesjudgement J)$, for all suitable $F$, $G$, $\Gamma$, $J$.
\end{propositionwithqed}

\begin{definition} \label{def:instantiate-judgement}
  Given a signature $\Sigma$, a hypothetical judgement $\Delta \typesjudgement J$ over a metavariable extension $\mvextend \Sigma \alpha$, a raw context $\Gamma$ over $\Sigma$, and an instantiation~$I$ of $\alpha$ in context $\Gamma$, the \defemph{judgement instantiation} $\act{(I,\Gamma)}{(\Delta \typesjudgement J)}$ is the judgement
  $\ctxextend{\Gamma}{\act{I} \Delta} \typesjudgement {\act I J}$ over $\Sigma$, where the thesis $\act I J$ is just $J$ with $\act{I}$ applied pointwise.
\end{definition}

\begin{propositionwithqed} \label{prop:instantiate-instantiate-judgement}
   Given instantiations
   \begin{align*}
     & \text{$I \in \Inst{\Sigma}{\position \Gamma}{\alpha}$ in context $\Gamma$ and}\\
     & \text{$K \in \Inst{\mvextend{\Sigma}{\alpha}}{\position \Delta}{\beta}$ in context $\Delta$},
   \end{align*}
   and a judgement $\Theta \typesjudgement J$ over $\mvextend{(\mvextend{\Sigma}{\alpha})}{\beta}$, the equation
   \begin{equation*}
     \act{(I,\Gamma)}(\act{(K,\Delta)}{(\Theta \types J)}) = \act{(\act{I}K,\act{(I,\Gamma)}\Delta)}(\Theta \types J)
   \end{equation*}
   holds modulo the canonical associativity renaming between their scopes.
\end{propositionwithqed}

\subsection{Boundaries}
\label{sec:boundaries}

In many places, one wants to consider data amounting to a hypothetical judgement without a head expression (if it is of object form, and so should have a head).
For instance, a \emph{goal} or \emph{obligation} in a proof assistant is specified by such data;
or when adjoining a new well-formed rule to a type theory, before picking a fresh symbol for it (if it is an object rule), the conclusion is specified by such data. 

These entities crop up frequently, and seem almost as fundamental as judgements, so deserve a name.

\begin{definition}
  Given a raw context $\Gamma$ and a judgement form~$\phi$, a \defemph{hypothetical boundary} of form $\phi$ over~$\Gamma$ is a map $B$ taking each \emph{boundary} slot of~$\phi$ of syntactic class $c$ to an element of  $\Expr{c}{\Sigma}{\Gamma}$.
\end{definition}

We display boundaries as judgements with a hole~$\bdryhead$ where the head should stand, or with $\qjudgeq$ in place of $\judgeq$:
\begin{equation*}
  \bdryhead \type
  \qquad\qquad
  \bdryhead : A
  \qquad\qquad
  A \qjudgeq B
  \qquad\qquad
  s \qjudgeq t : A
\end{equation*}
Since equality judgements have no heads, there is no difference in data between an equality judgement and an equality boundary, but there is one of sense: the judgement $A \judgeq B$ asserts an equality holds, whereas the boundary $A \qjudgeq B$ is a goal to be established or postulated.
Analogously,  $\bdryhead \type$ and $\bdryhead : A$ can be read as goals, the former that a type be constructed, and the latter that~$A$ be inhabited.

The terminology about judgements, as well as many constructions, carry over to boundaries.
In particular, the action of signature maps and instantiations on boundaries is defined just as in \cref{def:judgements-functorial,def:instantiate-judgement}, and enjoys analogous properties to \cref{prop:judgements-functorial,prop:instantiate-instantiate-judgement}.

Finally, and crucially, boundaries can be completed to judgements.
The data required depends on the form: completing an object boundary requires a head expression; completing an equality boundary, just a change of view.
\begin{definition} \label{def:completion-boundary}
  Let $B$ be a boundary in scope $\gamma$ over $\Sigma$.
  \begin{enumerate}
  \item If $B$ is of object form, then given an expression $e$ of the class of $B$ in scope $\gamma$, write $\plug{B}{e}$ for the \defemph{completion} of $B$ with head $e$, a judgement over $\gamma$.
  \item If $B$ is of equality form, then the completion of $B$ is just $B$ itself, viewed as a judgement.
  \end{enumerate}
\end{definition}

\begin{propositionwithqed}
  Completion of boundaries is natural with respect to signature maps: $\act{F}(\plug{B}{e}) = \plug{(\act{F}B)}{\act{F}e}$.
\end{propositionwithqed}

\subsection{Raw rules} \label{sec:raw-rules}

We now come to raw rules, syntactic entities that capture the notion of ``templates'' that are traditionally used to display the inference rules of a type theory.
The raw rules include all the information needed in order to be \emph{used}, for defining derivations and derivability of judgements --- but they do not yet include the extra properties we typically check when considering rules, and which guarantee good properties of the resulting derivability predicates.
We return to these later, in \cref{sec:acceptable-rules}.

\begin{definition}
  \label{def:raw-rule}%
  A \defemph{raw rule} $R$ over a signature $\Sigma$ consists of an arity $\arity{R}$,
  together with a family of judgements over the extended signature $\mvextend{\Sigma}{\arity{R}}$, the \defemph{premises} of~$R$,
  and one more judgement over $\mvextend{\Sigma}{\arity{R}}$, the \defemph{conclusion} of $R$.
  An \defemph{object rule} is one whose conclusion is an object judgement, otherwise it is an \defemph{equality rule}.
\end{definition}

\begin{example}%
\label{ex:raw-rule-app}
  Following on from \cref{ex:pi-types-arities}, the raw rule for function application has arity
  \begin{equation*}
    \arity{\symapp} = [(\Ty,0), (\Ty,1), (\Tm,0),(\Tm,0)].
  \end{equation*}
  Writing $\symA$, $\symB$, $\symb{s}$, $\symb{t}$ for the metavariable symbols of the extended signature $\mvextend{\Sigma}{\arity{\symapp}}$, the premises of the rule are the four-element family:
  \begin{equation*}
    [\;
    \istype{}{\symA}, \quad
    \istype{[0 \mto \symA]}{\symB(\synvar{0})}, \quad
    \isterm{}{\symb{s}}{\symPi(\symA, \symB(\synvar{0})}), \quad
    \isterm{}{\symb{t}}{\symA}
    \;] 
  \end{equation*}
  and its conclusion is the hypothetical judgement
  \begin{equation*}
    \isterm{}{\symapp(\symA, \symB(\synvar{0}), \symb{s}, \symb{t})}{\symB(\symb{t})}.
  \end{equation*}
  Of course, the traditional type-theoretic way of displaying such a rule is
  \begin{equation*}
  \inferrule
    { \istype{ }{\symA} \\
      \istype{x \of \symA}{\symB(x)} \\
      \isterm{}{\symb{s}}{\symPi(\symA, \symB(x)}) \\
      \isterm{}{\symb{t}}{\symA}
    }{
      \isterm{}{\symapp(\symA, \symB(x), \symb{s}, \symb{t})}{\symB(\symb{t})}
    }
  \end{equation*}
  It may seem surprising that we have $\symB(x)$ and $\symB(\symb{t})$ rather than, say, $B$ and $B[\symb{t}/x]$, since this style is usually apologised for as an abuse of notation.
  Here, it is precise and formal; $\symB$ is a metavariable \emph{symbol} in $\mvextend{\Sigma}{\arity{\symapp}}$, so it is applied to arguments when used in the syntax.
  Also note that the occurrences of~$x$ in the third premise and the conclusion are implicitly bound by~$\symPi$ and~$\symapp$, as can be discerned from their arities.
  When we instantiate the rule below with actual expressions $A$, $B$, $s$ and~$t$, then $\symB(x)$ and $\symB(\symb{t})$ will be translated into $B$ and $B[t/x]$ respectively.
\end{example}

\begin{definition}%
  \label{def:raw-rule-fmap}
  The functorial \defemph{action} of a signature map $F : \Sigma \to \Sigma'$ is the map $\act{F}$ which takes a rule $R$ over $\Sigma$ to the rule~$\act{F} R$ over~$\Sigma'$ whose arity is the arity~$\arity{R}$ of~$R$, and its premises and conclusion are those of $R$, all translated along the action of the induced map $\mvextend{F}{\arity{R}} : \mvextend{\Sigma}{\arity{R}} \to \mvextend{\Sigma'}{\arity{R}}$.
\end{definition}

A raw rule should not itself be thought of as a closure rule (though formally it is one), but rather as a template specifying a whole family of closure rules.

\begin{definition}
  \label{def:induced-closure-rule}\label{def:associated-closure-system}
  Given a rule $R$, a raw context $\Gamma$, and an instantiation $I$ of its arity~$\arity{R}$ over $\Gamma$, all over a signature~$\Sigma$, the \defemph{instantiation of $R$ under $I$, $\Gamma$}, is the closure rule $\act{(I,\Gamma)} R$ on $\Judg{\Sigma}$ whose premises and conclusion are the instantiations of the corresponding judgements of~$R$ under~$I$, $\Gamma$.
  The \defemph{closure system $\clos R$ associated to $R$} is the family
  $\famtuple{\act{(I,\Gamma)} R}{\Gamma \in \Context{\Sigma},\, I \in \Inst{\Sigma}{\Gamma}{\arity R}}$
  of all such instantiations.
\end{definition}

\begin{example}
  Continuing \cref{ex:raw-rule-app}, the raw rule $R_\symapp$ for application gives the closure system $\clos(R_\symapp)$, containing for each raw context~$\Gamma$ (with scope~$\gamma$) and expressions $A, B \in \ExprTy{\Sigma}{\gamma}$, $s, t \in \ExprTm{\Sigma}{\gamma}$ a closure rule
   \begin{align*}
    \inferrule
     { \istype{\Gamma}{A} \\
       \istype{\Gamma, x \of A}{B} \\
       \isterm{\Gamma}{s}{\symPi(A,B)} \\
       \isterm{\Gamma}{t}{A}
     }{
       \isterm{\Gamma}{\symapp(A, B, s, t)}{B[t/x]}.
     }
  \end{align*}
  
  This is visually similar to $R_\symapp$ itself, but not to be confused with it.
  The instantiation is a single closure rule, written over the ambient signature $\Sigma$; the original raw rule, written over $\mvextend{\Sigma}{\arity{\symapp}}$, is a template specifying the whole family of such closure rules.
  In the raw rule, $\symA$, $\symB$, $\symb{s}$, $\symb{t}$ are metavariable symbols from the extended signature $\mvextend{\Sigma}{\arity{\symapp}}$; in the instantiatiaion, the symbols $A$, $B$, $s$, $t$ (note the difference in fonts) are the actual syntactic expressions the raw rule was instantiated with.
\end{example}

This construction of $\clos$ formalises the usual informal explanation that a single written rule is a shorthand for a scheme of closure conditions, with the quantification of the scheme inferred from the written rule.

\begin{proposition} \label{prop:closure-system-of-raw-rule-under-signature-map}
  The construction $\clos$ is \emph{laxly natural} in signature maps,
  in that given $F : \Sigma \to \Sigma'$ and a rule $R$ over $\Sigma$, there is an induced simple map of closure systems $\clos R \to \clos \act{F}R$, over $\act{F} : \Judg \Sigma \to \Judg \Sigma'$.
\end{proposition}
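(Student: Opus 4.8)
The plan is to exhibit the required simple map explicitly on indices and then verify the single commutativity condition that makes it a map of closure systems over $\act{F}$. Recall that $\clos R$ is the family indexed by pairs $(\Gamma, I)$ with $\Gamma \in \Context{\Sigma}$ and $I \in \Inst{\Sigma}{\Gamma}{\arity R}$, sending such a pair to the closure rule $\act{(I,\Gamma)} R$ on $\Judg{\Sigma}$; and since $\arity{\act{F}R} = \arity R$ by \cref{def:raw-rule-fmap}, the family $\clos \act{F} R$ is indexed by pairs $(\Gamma', I')$ with $\Gamma' \in \Context{\Sigma'}$ and $I' \in \Inst{\Sigma'}{\Gamma'}{\arity R}$. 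By \cref{def:family-map-over}, a simple map over $\act{F}$ amounts to a function on index sets, subject to the requirement that it form a commutative square over $\act{F} : \ClosureRule{\Judg{\Sigma}} \to \ClosureRule{\Judg{\Sigma'}}$.

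First I would define the index map by $(\Gamma, I) \mapsto (\act{F}\Gamma, \act{F}I)$, using the signature-map actions on contexts and on instantiations (\cref{def:instantiation-actions}). This is well-typed: $\act{F}\Gamma$ has the same underlying scope as $\Gamma$, and $\act{F}I$ is again an instantiation of the unchanged arity $\arity R$ in that scope, so $(\act{F}\Gamma, \act{F}I)$ is a legitimate index of $\clos \act{F} R$.

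The only substantive step is then the commutativity of the square, i.e.\ the closure-rule identity
\begin{equation*}
  \act{F}\bigl( \act{(I,\Gamma)} R \bigr) = \act{(\act{F}I,\, \act{F}\Gamma)} (\act{F} R).
\end{equation*}
Since $\act{F}$ of a closure rule applies $\act{F}$ to each premise and to the conclusion, and since by \cref{def:raw-rule-fmap} the premises and conclusion of $\act{F}R$ are those of $R$ translated along $G \defeq \mvextend{F}{\arity R}$, it suffices to prove, for each premise or conclusion judgement $\Delta \typesjudgement J$ of $R$ over $\mvextend{\Sigma}{\arity R}$, the judgement-level naturality
\begin{equation*}
  \act{F}\bigl( \act{(I,\Gamma)} (\Delta \typesjudgement J) \bigr)
  = \act{(\act{F}I,\, \act{F}\Gamma)} \bigl( \act{G} (\Delta \typesjudgement J) \bigr).
\end{equation*}
Unfolding both sides with \cref{def:instantiate-judgement}, this reduces to two componentwise checks: that $\act{F}$ commutes with instantiation on the thesis expressions, and that $\act{F}$ commutes with instantiation on the context $\ctxextend{\Gamma}{\act{I}\Delta}$. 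Both are immediate instances of the expression-level naturality $\act{F}(\act{I} e) = \act{(\act{F} I)}(\act{G} e)$ recorded in \cref{prop:instantiation-boilerplate}(2), applied pointwise across the slots of $J$ and the types of $\Delta$, together with the fact that $\act{F}$ commutes with context extension and with renaming of the weakened $\Gamma$-part.

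I expect no genuine obstacle: the entire content is the naturality already packaged at the level of expressions in \cref{prop:instantiation-boilerplate}, and the work is the bookkeeping of lifting it successively through judgements, contexts, and finally closure rules, while carefully tracking the two signatures and the extended map $G = \mvextend{F}{\arity R}$. Finally, I would remark why the naturality is only \emph{lax}: the index map $(\Gamma, I) \mapsto (\act{F}\Gamma, \act{F}I)$ is in general neither injective nor surjective, since a context or instantiation over $\Sigma'$ need not lie in the image of $\act{F}$. We therefore obtain a simple map in one direction only, rather than an isomorphism of closure systems, which is exactly what the statement asserts.
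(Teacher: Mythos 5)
Your proposal is correct and follows essentially the same route as the paper: the paper's proof likewise sends $(\Gamma, I)$ to $(\act{F}\Gamma, \act{F}I)$ and rests on the single identity $\act{(\act{F} I,\, \act{F} \Gamma)} (\act{F}R) = \act{F} (\act{(I, \Gamma)} R)$, which it states without further unfolding. Your additional reduction of that identity to \cref{prop:instantiation-boilerplate} and the closing remark on why the naturality is only lax are both accurate and consistent with the paper's surrounding discussion.
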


\begin{proof}
  For each instantiation $I \in \Inst{\Sigma}{\position \Gamma}{\arity{R}}$, we have $\act{F}I \in \Inst{\Sigma'}{\position \Gamma}{\arity{R}}$ and $\act{(\act{F} I, \act{F} \Gamma)} R = \act{F} (\act{(I, \Gamma)} R)$.
\end{proof}

This is lax in the sense that the resulting map $\clos R \to \clos \act{F} R$ will not usually be an isomorphism: in general, not every instantiation of $\arity{R}$ over~$\Sigma'$ is of the form~$\act{F} I$.
This illustrates the need for considering raw rules formally, rather than just viewing a type theory as a collection of closure rules: when translating a type theory between signatures, we want not just the translations of the original closure rules, but all instantiations of the translated raw rules.

One might hope for $\clos$ to be similarly laxly natural under instantiations.
However, this is not so straightforwardly true; we will return to this in \cref{prop:instantiation-of-closure-system-of-raw-rule}, once the structural rules are introduced, and show a weaker form of naturality.

\subsection{Structural rules}

The rules used in derivations over a type theory will fall into two groups:
\begin{enumerate}
\item the \defemph{structural rules}, governing generalities common to all type theories;
\item the \defemph{specific rules} of the particular type theory.
\end{enumerate}

The structural rules over a signature~$\Sigma$ are a family of closure rules on $\Judg{\Sigma}$, which we now lay out.
They are divided into four families:
\begin{itemize}
\item the variable rules,
\item rules stating that equality is an equivalence relation,
\item rules for conversion of terms and term equations between equal types, and
\item rules for substitutions,
\end{itemize}
We have chosen the rules so that the development of the general setup requires no hard meta-theorems, as far as possible. In particular, we include the substitution rules into the formalism so that we can postpone proving elimination of substitution until \cref{sec:elimination-substitution}. You might have expected to see congruence rules among the structural rules, but those we take care of separately in \cref{sec:congruence-rules} because they depend on the specific rules.

The first three families of structural rules are straightforward.

\begin{definition}
  \label{def:variable-rule}%
  For each raw context $\Gamma$ over a signature $\Sigma$, and for each $i \in \position{\Gamma}$, the corresponding \emph{variable rule} is the closure rule
  \begin{equation*}
    \infer{
      \istype{\Gamma}{\Gamma_i}
    }{
      \isterm{\Gamma}{\synvar{i}}{\Gamma_i}
    }
  \end{equation*}
  Taken together, the variable rules form a family indexed by such pairs $(\Gamma, i)$.
\end{definition}

While this had to be given directly as a family of closure rules, the next two groups of structural rules can be expressed as raw rules.

\begin{definition}
  \label{def:equivalence-relation-rule}%
  The \defemph{raw equivalence relation rules} are the following raw rules:
\begin{mathpar}
\infer
  {
    \istype{}\symA
  }{
    \eqtype{}{\symA}{\symA}
  }
\and
\infer{
  \istype{}{\symA}
  \\
  \istype{}{\symB}
  \\
  \eqtype{}{\symA}{\symB}
}{
  \eqtype{}{\symB}{\symA}
}
\and
\infer{
  \istype{}{\symA}
  \\
  \istype{}{\symB}
  \\
  \istype{}{\symC}
  \\
  \eqtype{}{\symA}{\symB}
  \\
  \eqtype{}{\symB}{\symC}
}{
  \eqtype{}{\symA}{\symC}
}
\and
\infer
  {
   \istype{} \symA
   \\
   \isterm{}{\symb{s}} \symA
  }{
    \eqterm{}{\symb{s}}{\symb{s}} \symA
  }
\and
\infer{
  \istype{}{\symA}
  \\
  \isterm{}{\symb{s}} \symA
  \\
  \isterm{}{\symb{t}} \symA
  \\
  \eqterm{}{\symb{s}}{\symb{t}}\symA
}{
  \eqterm{}{\symb{t}}{\symb{s}}\symA
}
\and
\infer{
  \istype{}{\symA}
  \\
  \isterm{}{\symb{s}} \symA
  \\
  \isterm{}{\symb{t}} \symA
  \\
  \isterm{}{\symb{u}} \symA
  \\
  \eqterm{}{\symb{s}}{\symb{t}} \symA
  \\
  \eqterm{}{\symb{t}}{\symb{u}} \symA
}{
  \eqterm{}{\symb{s}}{\symb{u}} \symA
}
\end{mathpar}
The \defemph{equivalence relation rules over~$\Sigma$} is the sum of the closure systems associated to the above equivalence relation rules, over a given signature~$\Sigma$.
\end{definition}

We trust the reader to be able read off the arities of the metavariable symbols appearing in raw rules.
For instance, from the use of~$\symA$ and~$\symb{s}$ in the above term reflexivity rule
we can tell that the rule has arity $[(\Ty,0),(\Tm,0)]$.

The conversion rules are written as raw rules, as well.

\begin{definition}
  \label{def:conversion-rule}%
  The \defemph{raw conversion rules} are the following raw rules:
  \begin{mathpar}
  \infer{
    \istype{}{\symA}
    \\
    \istype{}{\symB}
    \\
    \isterm{}{\symb{s}}{\symA}
    \\
    \eqtype{} \symA \symB
  }{
    \isterm{}{\symb{s}}{\symB}
  }
  \and
  \infer{
    \istype{}{\symA}
    \\
    \istype{}{\symB}
    \\
    \isterm{}{\symb{s}}{\symA}
    \\
    \isterm{}{\symb{t}}{\symA}
    \\
    \eqterm{}{\symb{s}}{\symb{t}}{\symA}
    \\
    \eqtype{}{\symA} \symB
  }{
    \eqterm{}{\symb{s}}{\symb{t}}{\symB}
  }
\end{mathpar}
Again, the \defemph{conversion rules over $\Sigma$} is the sum of the closure systems associated to the above conversion rules, over a signature~$\Sigma$.
\end{definition}

The remaining groups are the substitution and equality-substitution rules.

The substitution rule should formalize the notion that ``well-typed'' substitutions preserve derivability of judgements.
Treatments taking simultaneous substitution as primitive usually say something like: a raw substitution $f : \Delta \to \Gamma$ is well-typed if $\isterm{\Delta}{f(i)}{\tca{f}\Gamma_i}$ for each $i \in \position{\Gamma}$.
However, taking all these judgements as premises in the substitution rule is rather profligate: most substitutions in practice act non-trivially only on a small part of the context.
For instance, a single-variable substitution may be represented as a raw substitution $\Gamma \to \ctxextend{\Gamma}{A}$ acting trivially on $\Gamma$, so no checking should be required there.
Indeed, in treatments taking single-variable substitution as primitive, only require checking of the substituted expression.
To abstract this situation, we define the substitution rule as follows. Recall that a subset $X \subseteq Y$ is \emph{complemented} when $X \cup (Y \setminus X) = Y$, a condition that is vacuously true in classical logic.

\begin{definition}
  \label{def:substitution-rule}%
  A \defemph{raw substitution} $f : \Delta \to \Gamma$ \defemph{acts trivially at $i \in \position{\Gamma}$} when there is some (necessarily unique) $j \in \position{\Delta}$ such that $f(i) = \synvar{j}$ and $\Delta_j = \tca{f} \Gamma_i$.
  Given a complemented subset $K \subseteq \position{\Gamma}$ on which~$f$ acts trivially, the corresponding \defemph{substitution rule} is the closure rule
  \begin{equation}
    \label{eq:substitution-rule}
    \infer{
      \Gamma \typesjudgement J
      \\\\
     \text{for each $i \in \position{\Gamma} \setminus K$:} \quad
     \isterm{\Delta}{f(i)}{\tca{f}\Gamma_i}
    }{
      \Delta \typesjudgement \tca{f}J
    }
  \end{equation}
  The substitution rules form a family of closure rules, indexed by $f : \Delta \to \Gamma$, $K$, and $\Gamma \typesjudgement J$.
\end{definition}

\noindent
In the above definition, $K$ is thought of as a set of positions at which~$f$ is
\emph{guaranteed} to act trivially, but it may also do so outside~$K$, as there is no harm in checking positions at which~$f$ acts trivially.

The substitution rules are formulated carefully for another, more technical reason.
In inductions over derivations (e.g.\ for \cref{lem:admissibility-substitution}), when a substitution descends under a binder, it gets extended to act trivially on the  variables introduced by the binder.
Within the inductive cases, we may not yet have enough information to conclude that the types of the bound variables are well-formed, but we can rely on the trivial action of the substitution.
Keeping substitution rules flexible and economical in this way therefore keeps these inductive proofs much cleaner.

Along similar lines, we have rules stating that substitution of equal terms gives equal results.
\begin{definition}
  \label{def:equality-substitution-rule}%
  Raw substitutions $f, g : \Delta \to \Gamma$ \defemph{act jointly trivially} at $i \in \position{\Gamma}$ when there is some (necessarily unique) $j \in \position{\Delta}$ such that $f(i) = g(i) = \synvar{j}$ and $\Delta_j = \tca{f} \Gamma_i = \tca{g} \Gamma_i$.
  Given a complemented subset $K \subseteq \position{\Gamma}$ on which $f$ and $g$ act jointly trivially,
  the corresponding \defemph{equality-substitution rules} are the closure rules
  \begin{mathpar}
    \infer{
      \istype{\Gamma}{A}
      \\\\
      \text{for each $i \in \position{\Gamma} \setminus K$:}\\\\
      \isterm \Delta {f(i)} {\tca f \Gamma_i}\\
      \isterm \Delta {g(i)} {\tca g \Gamma_i} \\
      \eqterm{\Delta}{f(i)}{g(i)}{\tca{f}\Gamma_i}
    }{
      \eqtype{\Delta}{\tca{f}A}{\tca{g}A}
    }
    \and
    \infer{
      \isterm{\Gamma}{t}{A}
      \\\\
      \text{for each $i \in \position{\Gamma} \setminus K$:}\\\\
      \isterm \Delta {f(i)} {\tca f \Gamma_i}\\
      \isterm \Delta {g(i)} {\tca g \Gamma_i} \\
      \eqterm{\Delta}{f(i)}{g(i)}{\tca{f}\Gamma_i}
    }{
      \eqterm{\Delta}{\tca{f} t}{\tca{g} t}{\tca{f} A}
    }
  \end{mathpar}
  The equality-substitution rules form a family of closure rules, indexed by $f : \Delta \to \Gamma$, $K$, and either $\istype{\Gamma}{A}$ or $\isterm{\Gamma}{t}{A}$.
\end{definition}

\begin{definition}
  \label{def:structural-rules}
  The \defemph{structural rules over $\Sigma$}, denoted $\StructuralRules \Sigma$, is the sum of the families of closure rules set out above: the variable, equivalence relation, conversion, substitution, and equality-substitution rules.
\end{definition}

\begin{proposition}
  \label{prop:structural-rules-under-signature-map}%
  Given a signature map $F : \Sigma \to \Sigma'$, there is a simple map of closure systems $\StructuralRules \Sigma \to \StructuralRules \Sigma'$ over $\act{F} : \Judg \Sigma \to \Judg \Sigma'$.
\end{proposition}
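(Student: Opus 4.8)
The plan is to exhibit the simple map summand by summand, since $\StructuralRules \Sigma$ is by \cref{def:structural-rules} a sum of five families of closure rules, and a simple map over $\act{F}$ out of a sum is determined by simple maps out of each summand. For each structural rule $R$ over $\Sigma$ I must produce a structural rule $R'$ over $\Sigma'$ whose premises and conclusion are the pointwise $\act{F}$-translations of those of $R$; these assemble into the required family map over $\act{F} : \Judg \Sigma \to \Judg \Sigma'$.

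For the \emph{variable rules}, I would send the rule indexed by $(\Gamma, i)$ to the one indexed by $(\act{F}\Gamma, i)$. This is immediate from the two identities $\act{F}(\Gamma_i) = (\act{F}\Gamma)_i$ (definition of the translation of contexts) and $\act{F}(\synvar i) = \synvar i$ (definition of the action of a signature map on expressions): the premise $\istype{\Gamma}{\Gamma_i}$ and conclusion $\isterm{\Gamma}{\synvar i}{\Gamma_i}$ translate exactly to the premise and conclusion of the variable rule for $(\act{F}\Gamma, i)$ over $\Sigma'$.

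For the \emph{equivalence relation} and \emph{conversion rules}, each is a sum of the closure systems $\clos R_k$ associated to a fixed list of raw rules $R_k$. Crucially, these raw rules mention no symbol of $\Sigma$ --- only metavariable symbols and the four judgement forms --- so their translation $\act{F} R_k$ is literally the corresponding raw rule over $\Sigma'$, whence $\clos(\act{F}R_k)$ is exactly the matching summand of $\StructuralRules \Sigma'$. I can therefore invoke \cref{prop:closure-system-of-raw-rule-under-signature-map}, which gives a simple map $\clos R_k \to \clos(\act{F}R_k)$ over $\act{F}$, and take the sum of these maps.

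The substantive cases, and where the real work lies, are the \emph{substitution} and \emph{equality-substitution rules}, since these are given directly as families of closure rules rather than via raw rules. Here I would send the rule indexed by a raw substitution $f : \Delta \to \Gamma$, a complemented $K \subseteq \position{\Gamma}$, and a judgement $\Gamma \types J$ to the rule indexed by $\act{F}f : \act{F}\Delta \to \act{F}\Gamma$, the same $K$ (now a subset of $\position{\act{F}\Gamma} = \position{\Gamma}$), and $\act{F}\Gamma \types \act{F}J$. The key point is that $\act{F}f$ still acts trivially on $K$: from $f(i) = \synvar j$ and $\Delta_j = \tca f \Gamma_i$ one gets $(\act{F}f)(i) = \act{F}(\synvar j) = \synvar j$ and, using the naturality identity $\act{F}(\tca f e) = \tca{(\act{F}f)}(\act{F}e)$, also $(\act{F}\Delta)_j = \tca{(\act{F}f)}(\act{F}\Gamma)_i$; complementedness of $K$ survives because the index set $\position{\Gamma}$ is unchanged. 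The same naturality identity shows that the premises $\isterm{\Delta}{f(i)}{\tca f \Gamma_i}$ and the conclusion $\Delta \types \tca f J$ translate precisely to the corresponding data of the substitution rule over $\Sigma'$. The equality-substitution case is entirely parallel, replacing trivial action by joint trivial action of the pair $(\act{F}f, \act{F}g)$ and applying the same naturality on each of the three equality slots. The main obstacle is thus purely bookkeeping: confirming that the trivial-action side conditions persist under translation, for which the functoriality of $\act{F}$ on substitutions and its commutation with $\tca{(-)}$ are exactly the tools needed.
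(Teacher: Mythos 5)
Your proposal is correct and takes essentially the same route as the paper, whose proof is a one-line remark that for each instance of a structural rule over $\Sigma$, the map $F$ acts on the indexing data to give an instance of the same structural rule over $\Sigma'$, with the resulting closure condition being the $\act{F}$-translation of the original. You simply spell out the checks the paper leaves implicit --- in particular the preservation of the trivial-action side condition via $\act{F}(\tca{f} e) = \tca{(\act{F}f)}(\act{F}e)$, and the observation that the equivalence and conversion raw rules contain no $\Sigma$-symbols so that \cref{prop:closure-system-of-raw-rule-under-signature-map} lands in the correct summand --- all of which is accurate.
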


\begin{proof}
  This is straightforward, amounting to checking that for each instance of a structural rule over $\Sigma$, $F$ acts on the data to give an instance of the same structural rule over $\Sigma'$, and the resulting closure condition is the translation along $\act{F}$ of the original closure condition over $\Sigma$.
\end{proof}

Before giving a similar statement about instantiations of structural rules, we must first tie up the loose end from above about instantiation of closure systems of raw rules.

\begin{proposition} \label{prop:instantiation-of-closure-system-of-raw-rule}
  Let $R$ be a raw rule over $\Sigma$, $\mvextend{R}{\beta}$ its translation to an extension $\mvextend{\Sigma}{\beta}$, and $I$ an instantiation of~$\beta$ in some context~$\Gamma$.
  Then there is a closure system map $\clos (\mvextend{R}{\beta}) \to \clos R + \StructuralRules \Sigma$, over $\act{(I,\Gamma)} : \Judg{(\mvextend{\Sigma}{\beta})} \to \Judg \Sigma$.
\end{proposition}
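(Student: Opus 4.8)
Following \cref{def:derivation-grafting}, a closure system map $\clos(\mvextend{R}{\beta}) \to \clos R + \StructuralRules \Sigma$ over $\act{(I,\Gamma)}$ is the assignment, to each rule of $\clos(\mvextend{R}{\beta})$, of a derivation of its $\act{(I,\Gamma)}$-image over $\clos R + \StructuralRules \Sigma$. By \cref{def:associated-closure-system} a rule of $\clos(\mvextend{R}{\beta})$ is an instantiation $\act{(J,\Delta)}(\mvextend{R}{\beta})$, indexed by a raw context $\Delta$ over $\mvextend{\Sigma}{\beta}$ and an instantiation $J \in \Inst{\mvextend{\Sigma}{\beta}}{\position{\Delta}}{\arity R}$ (note $\arity{\mvextend{R}{\beta}} = \arity R$). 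My plan is to match this, up to a renaming, with the \emph{single} rule
\[
  \act{(\act{I}J,\ \act{(I,\Gamma)}\Delta)} R \ \in\ \clos R,
\]
the instantiation of $R$ by $\act{I}J \in \Inst{\Sigma}{\sumscope{\position{\Gamma}}{\position{\Delta}}}{\arity R}$ in the $\Sigma$-context $\act{(I,\Gamma)}\Delta$, whose scope is exactly $\sumscope{\position{\Gamma}}{\position{\Delta}} = \position{\act{(I,\Gamma)}\Delta}$.

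The central step is to verify that this target rule agrees with the source rule's image $\act{(I,\Gamma)}(\act{(J,\Delta)}(\mvextend{R}{\beta}))$ premise-by-premise and in its conclusion, modulo the canonical associativity renaming $\rho$ between $\sumscope{\position{\Gamma}}{(\sumscope{\position{\Delta}}{\cdot})}$ and $\sumscope{(\sumscope{\position{\Gamma}}{\position{\Delta}})}{\cdot}$. This is exactly the associativity of instantiation on judgements: applying \cref{prop:instantiate-instantiate-judgement} (with its $\alpha$ taken to be $\beta$, its $K$ taken to be $J$, and its inner judgement the translation along $\mvextend{\Sigma}{\beta}$ of the corresponding premise or conclusion of $R$) yields
\[
  \act{(I,\Gamma)}(\act{(J,\Delta)}(\mvextend{R}{\beta}))
    = \act{\rho}\bigl(\act{(\act{I}J,\ \act{(I,\Gamma)}\Delta)} R\bigr)
\]
rule-component by rule-component. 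Here one uses that re-embedding a judgement of $R$ into $\mvextend{(\mvextend{\Sigma}{\beta})}{\arity R}$ via $\mvextend{R}{\beta}$ and then instantiating the $\arity R$-metavariables by $\act{I}J$ (an instantiation over $\Sigma$) collapses back to the plain instantiation of the original judgement of $R$, by functoriality of translation (\cref{def:raw-rule-fmap}) together with \cref{prop:instantiation-boilerplate}. This bookkeeping --- tracking the three instantiations $I$, $J$, $\act{I}J$ and the re-embedding carried by $\mvextend{R}{\beta}$ --- is the main obstacle; once it is in place the rest is mechanical.

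It remains to bridge the renaming $\rho$, and this is where $\StructuralRules \Sigma$ enters. Since $\rho$ is an isomorphism of scopes it is in particular a renaming, so by \cref{def:variable-renaming} it induces variable-to-variable substitutions $\bar{\rho}$ and $\overline{\rho^{-1}}$; and because the source and target judgements carry contexts related by $\rho$, these substitutions act trivially (\cref{def:substitution-rule}) at every position. Taking the complemented subset $K$ to be the whole scope therefore makes the substitution rule premise-free: it transports a judgement along $\rho$ or $\rho^{-1}$ in a single step. The derivation assigned to the source rule is then assembled as follows: its hypotheses are the source premises $\act{\rho}P_k$; from each, one premise-free substitution step along $\overline{\rho^{-1}}$ derives the target premise $P_k$; the single $\clos R$-rule $\act{(\act{I}J,\ \act{(I,\Gamma)}\Delta)} R$ applied to these derives the target conclusion $c$; and one final premise-free substitution step along $\bar{\rho}$ derives the source conclusion $\act{\rho}c$. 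No conversion, equivalence, or equality-substitution rules are needed, only the substitution rules.

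Finally, it is worth noting that over a strict scope system the renaming $\rho$ is the identity: the source rule then lies literally in $\clos R$, the substitution steps disappear, and the map is just the evident inclusion. The structural rules are present in the target solely to absorb the non-strict associativity isomorphism, confirming that they contribute no genuine derivational content here.
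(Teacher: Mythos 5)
Your proposal is correct and follows essentially the same route as the paper's proof: identify the target rule as the instantiation of $R$ by $\act{I}J$ in the instantiated context, invoke \cref{prop:instantiate-instantiate-judgement} to see that the two rules agree only up to the associativity renaming, and absorb that renaming with premise-free instances of the substitution rule (acting trivially everywhere) before each premise and after the conclusion. Your closing remark about strict scope systems is a nice addition not made explicit in the paper's proof, but the substance is identical.
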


\begin{proof}
  We need to show that for each instantiation $K \in \Inst{\mvextend{\Sigma}{\beta}}{\position \Delta}{\arity{R}}$ in some context~$\Delta$, the closure condition $\act{I} (\act{(K,\Gamma)} R)$ is derivable from $\clos R + \StructuralRules \Sigma$.

  Given such $K$ and $\Gamma$, we can instantiate both under $I$ to get an instantiation of~$R$ over $\Sigma$.
  We might hope that $\act{(\act{I}K,\act{I}\Gamma)}R = \act{I}\left(\act{(K,\Gamma)}(\mvextend{R}{\beta}\right)$;
  by \cref{prop:instantiate-instantiate-judgement}, we see that this does not strictly, but only up to an associativity renaming in each judgement.

  The substitution structural rule comes to our rescue here.
  For each judgement $J$ involved in $R$, with context $\Theta$, the associativity renamings give substitutions between $\act{I}(\act{K}\Theta)$ and $\act{(\act{I}K)}\Theta$ acting trivially at every position, so the substitution rule lets us derive $\act{I}(\act{K}J)$ from $\act{(\act{I}K)}J$ (with no further premises), and vice versa.

  The desired derivation of $\act{I}\left(\act{(K,\Gamma)}(\mvextend{R}{\beta}\right)$ from $\clos R + \StructuralRules \Sigma$ therefore consists of $\act{(\act{I}K,\act{I}\Gamma)}R$, together with an instance of the substitution rule after the conclusion and before each premise, implementing the associativity renamings.
\end{proof}

\begin{proposition}  \label{prop:instantiation-of-structural-rules}%
  Let $I \in \Inst{\Sigma}{\position \Gamma}{\alpha}$ be an instantiation in context $\Gamma$.
  Then there is a closure system map $\StructuralRules (\mvextend{\Sigma}{\alpha}) \to \StructuralRules \Sigma$, over $\act{(I,\Gamma)} : \Judg {(\mvextend{\Sigma}{\alpha})} \to \Judg{\Sigma}$.
\end{proposition}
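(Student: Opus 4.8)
The plan is to exhibit, for each closure rule of $\StructuralRules(\mvextend{\Sigma}{\alpha})$, a derivation of its image under $\act{(I,\Gamma)}$ over $\StructuralRules\Sigma$, as required by \cref{def:derivation-grafting}, and then assemble these by the universal property of the sum of closure systems. I would treat the five families of \cref{def:structural-rules} separately, and they fall into two kinds.

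First, the variable, substitution, and equality-substitution rules are given directly as families of closure rules, and I expect each to instantiate to a \emph{single} structural rule of the same kind over $\Sigma$, so the required derivation is just that one rule with no grafting. For the variable rule attached to $(\Delta, i)$ over $\mvextend{\Sigma}{\alpha}$, instantiating under $(I,\Gamma)$ sends the context to $\act{(I,\Gamma)}\Delta$ and the head $\synvar{i}$ to $\synvar{\inrscope(i)}$; by the definition of context instantiation the displayed type $\act{I}\Delta_i$ is exactly $(\act{(I,\Gamma)}\Delta)_{\inrscope(i)}$, so the image is precisely the variable rule over $\Sigma$ for $(\act{(I,\Gamma)}\Delta, \inrscope(i))$. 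For a substitution rule indexed by $f\colon \Delta' \to \Delta$, a complemented $K \subseteq \position{\Delta}$, and a judgement $\Delta \typesjudgement J$, I would use the instantiated substitution $\act{I}f\colon \act{(I,\Gamma)}\Delta' \to \act{(I,\Gamma)}\Delta$ of \cref{def:instantiation-actions}. The key point is that $\act{I}f$ acts as the identity substitution on the $\Gamma$-component, and at each $j \in K$ the naturality equation $\act{I}(\tca{f}e) = \tca{(\act{I}f)}(\act{I}e)$ of \cref{prop:instantiation-boilerplate} shows it still acts trivially; hence $\act{I}f$ acts trivially on the complemented set $K' \defeq \{\inlscope(i) \such i \in \position{\Gamma}\} \cup \{\inrscope(j) \such j \in K\}$. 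The same naturality equation, applied pointwise, matches both the remaining premises and the conclusion of the instantiated rule with those of the substitution rule over $\Sigma$ for $(\act{I}f, K', \act{(I,\Gamma)}(\Delta \typesjudgement J))$. The equality-substitution rules are handled identically, using that $\act{I}f$ and $\act{I}g$ act jointly trivially on $K'$.

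Second, the equivalence-relation and conversion rules are each presented as the closure system $\clos E$ of a raw rule $E$, and here I would invoke \cref{prop:instantiation-of-closure-system-of-raw-rule} with $\beta \defeq \alpha$. Since $E$ uses no symbols of $\Sigma$, its version over $\mvextend{\Sigma}{\alpha}$ is the translation $\mvextend{E}{\alpha}$, so that proposition yields a closure-system map $\clos(\mvextend{E}{\alpha}) \to \clos E + \StructuralRules\Sigma$ over $\act{(I,\Gamma)}$. Composing with the inclusion $\clos E + \StructuralRules\Sigma \to \StructuralRules\Sigma$ (valid because $\clos E$ is one of the summands of $\StructuralRules\Sigma$) gives the required map on these families.

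The genuinely substantive step is the one already isolated in \cref{prop:instantiation-of-closure-system-of-raw-rule}: for the equivalence and conversion rules the double metavariable extension forces an associativity renaming between $\act{I}(\act{K}{-})$ and $\act{(\act{I}K)}{(-)}$, so their images are not literally single structural rules but must be bridged by instances of the substitution rule. For the variable and (equality-)substitution families there is only a single extension, hence no associativity mismatch, so the care there is purely bookkeeping: checking that trivial action and complementedness are preserved under $\act{I}{(-)}$, which I expect to be routine given \cref{prop:instantiation-boilerplate} and \cref{def:instantiation-actions}.
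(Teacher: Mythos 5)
Your proposal is correct and follows essentially the same route as the paper: the equivalence and conversion rules are dispatched through \cref{prop:instantiation-of-closure-system-of-raw-rule}, and the variable, substitution, and equality-substitution rules are handled by instantiating their indexing data to obtain instances of the same structural rules over $\Sigma$. The one divergence is that the paper uniformly wraps these latter instances in associativity renamings derived via the substitution rule, whereas you argue---plausibly, given the strict naturality equations of \cref{prop:instantiation-boilerplate} and the fact that only a single scope sum arises here---that no such renaming is needed, making your account of this case slightly sharper without changing the substance of the argument.
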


\begin{proof}
  For the structural rules given as raw rules, the required derivations are given by \cref{prop:instantiation-of-closure-system-of-raw-rule}.
  
  For the other structural rules, we start as in \cref{prop:structural-rules-under-signature-map}.
  Given an instance of a structural rule over $\mvextend{\Sigma}{\alpha}$, we instantiate its data under $I$ to get an instance of the same structural rule over $\Sigma$.
  Wrapping this instance in associativity renamings, derived by the substitution rule as in \cref{prop:instantiation-of-closure-system-of-raw-rule}, gives the required derivation of the instantiation of the original instance.
\end{proof}

\subsection{Congruence rules}
\label{sec:congruence-rules}

Congruence rules, which state that judgemental equality commutes with type and term symbols, are peculiar enough to demand special attention.

They are present in almost all type theories, but rarely explicitly written out, and are often classified as structural rules.
We reserve that term for the rules of the preceding section, which are independent of the specific theory under consideration.
Congruence rules, by contrast, depend on the specific rules of a theory; for instance, the congruence rule for~$\symPi$ is determined by the formation rule for~$\symPi$.

In this section we define how any object rule determines an associated congruence rule.
We first set up an auxiliary definition, associating equality judgements to object judgements.

\begin{definition}
  \label{def:judgement-associated-congruence}
  For signature maps $\ell, r : \Sigma \to \Sigma'$ and an object judgement $J$ over~$\Sigma$, we define the equality judgment $\tca{(\ell,r)}J$ over~$\Sigma'$ by
  \begin{align*}
     \tca{(\ell,r)}(\istype{\Gamma}{A})
    \ &\defeq \ 
    (\eqtype{\act{\ell} \Gamma}{\act{\ell} A}{\act{r} A}),
    \\
    \tca{(\ell,r)}(\isterm{\Gamma}{t}{A})
    \  &\defeq \ 
    (\eqterm{\act{\ell} \Gamma}{\act{\ell} t}{\act r t}{\act{\ell} A}).
  \end{align*}
\end{definition}

\begin{definition}
  \label{def:congruence-rule}%
  Suppose $R$ is a raw object rule over a signature $\Sigma$, with premises $\famtuple{P_i}{i \in I}$ and conclusion $C$.
  Let $\phi_i$ be the judgement form of $P_i$, and take $I_{\ob} \defeq \set{i \in I \such \phi_i \in \set{\Ty, \Tm}}$, the set of object premises of~$R$.
  The \defemph{associated congruence rule} $\congrule{R}$ is a raw rule with arity
  $\arity{\congrule{R}} \defeq \arity{R} + \arity{R}$, defined as follows, where
  $\ell, r : \mvextend{\Sigma}{\arity{R}} \to \mvextend{\Sigma}{\arity{\congrule{R}}}$ are signature maps
  \begin{align*}
    \ell(\inl(S)) &\defeq \inl(S), &
    r(\inl(S)) &\defeq \inl(S), \\
    \ell(\inr(M)) &\defeq \inr(\inl(M)), &
    r(\inr(M)) &\defeq \inr(\inr(M)):
  \end{align*}
  \begin{enumerate}
  \item The premises of $\congrule{R}$ are indexed by the set $I + I + I_{\ob}$, and are given by:
    \begin{enumerate}
    \item the $\inl(i)$-th premise is $\act{\ell} P_i$,
    \item the $\inr(j)$-th premise is $\act{r} P_j$,
    \item the $\iota_2(k)$-th premise is the equality $\tca{(\ell,r)} P_k$, cf.\ \cref{def:judgement-associated-congruence}.
    \end{enumerate}

  \item The conclusion of $\congrule{R}$ is $\tca{(\ell,r)}C$.
  \end{enumerate}
\end{definition}

\begin{example}
  \label{ex:pi-congruence-rule}%
  \Cref{def:congruence-rule} works as expected. For example, the congruence rule
  associated with the usual product formation rule
  \begin{equation*}
    \inferrule{
      \istype{}{\symA} \\
      \istype{x \of \symA}{\symB(x)}
    }{
      \istype{}{\symPi(\symA, \symB(x))}
    }
  \end{equation*}
  comes out to be
  \begin{equation*}
    \inferrule{
      \istype{}{\symA'} \\
      \istype{x \of \symA'}{\symB'(x)} \\\\
      \istype{}{\symA''} \\
      \istype{x \of \symA''}{\symB''(x)} \\
      \\\\
      \eqtype{}{\symA'}{\symA''}
      \\
      \eqtype{x \of \symA'}{\symB'(x)}{\symB''(x)}
    }{
      \eqtype{}{\symPi(\symA', \symB''(x))}{\symPi(\symA'', \symB''(x))}
    }
  \end{equation*}
\end{example}

\subsection{Raw type theories}

After a considerable amount of preparation, we are finally in position to formulate what a rudimentary general type theory is.

\begin{definition}
  \label{def:raw-type-theory}%
  A \defemph{raw type theory}~$T$ over a signature $\Sigma$ is a family of raw rules over~$\Sigma$.
\end{definition}

\begin{definition} \label{def:closure-system-of-type-theory}
  The \defemph{associated closure system} of a raw type theory~$T$ over~$\Sigma$ is the closure system $\clos T \defeq \StructuralRules \Sigma + \coprod_{R \in T} \clos R$ on $\Judg{\Sigma}$; that is, it consists of the structural rules for~$\Sigma$, and the closure rules generated by the instantiations of the rules of~$T$.
  A \defemph{derivation in $T$} is a derivation over the closure system $\clos T$, in the sense of \cref{def:closure-system-derivation}.
\end{definition}

Note that we have not included the congruence rules into the closure system associated with a raw type theory. Instead, the presence of congruence rules will be required separately as a well-behavedness condition in \cref{sec:acceptable-type-theories}.
Derivability and admissibility of rules may now be defined as follows.

\begin{definition} \label{def:derivable-raw-rule}
  Let $T$ be a raw type theory over~$\Sigma$, and $R$ a raw rule over~$\Sigma$.
  \begin{enumerate}
  \item $R$ is \defemph{derivable} from~$T$ if its conclusion is derivable from its premises, over $\mvextend{T}{\arity{R}}$.
  \item $R$ is \defemph{admissible} for~$T$ if for every instance $\act{(I,\Gamma)}R$,
    its conclusion is derivable if its premises are derivable, all over~$T$.
  \end{enumerate}
\end{definition}

We record the basic category-theoretic structure of raw type theories.

\begin{definition}
  Given a signature map $F : \Sigma \to \Sigma'$, and raw type theories $T$, $T'$ over $\Sigma$ and $\Sigma'$ respectively, a \defemph{simple map $\bar{F} : T \to T'$ over $F$} is a family map $T \to T'$ over $\act{F} : \RawRule{\Sigma} \to \RawRule{\Sigma'}$.
  Such $\bar{F}$ is thus a map giving for each rule~$R$ of~$T$ a rule~$\bar{F}(R)$ of~$T'$, whose premises and conclusion are the translations along~$F$ of those of~$R$.
  There are evident identity simple maps, and composites over composites of signature maps, forming a category over the category of signatures.

  Furthermore, a signature map $F : \Sigma \to \Sigma'$ \defemph{acts on a raw type theory} $T$ over $\Sigma$, to give a raw type theory $\act{F}(T)$
  over $\Sigma'$, which consists of the translations $\act{F} R$ of the rules~$R$ of~$T$.
  As with family maps, maps $T \to T'$ over $F$ correspond precisely to maps $\act{F} T \to T'$ over $\idmap[\Sigma']$.
  In the case of the inclusion to a metavariable extension $\inl : \Sigma \to \mvextend{\Sigma}{\alpha}$, we write $\mvextend{T}{\alpha}$ for the translation $\act{{\inl}} T$ of $T$ to $\mvextend{\Sigma}{\alpha}$. 
\end{definition}

\begin{proposition}%
  \label{prop:cl-functorial-simple-maps} \label{prop:derivations-functorial-simple-maps}
  The construction $\clos{}$ is functorial in simple maps:
  a simple map of raw type theories $\bar{F} : T \to T'$ over $F : \Sigma \to \Sigma'$ induces a map $\act{\bar{F}} : \clos{T} \to \clos{T'}$ over $\act{F} : \Judg{\Sigma} \to \Judg{\Sigma'}$,
  and hence provides a translation of any derivation~$D \in \derivation{T}{H}{(\Gamma \typesjudgement J)}$ to a derivation $\act{\bar{F}} D \in \derivation{T'}{\act{F} H}{(\act{F} \Gamma \typesjudgement \act{F} J)}$.
\end{proposition}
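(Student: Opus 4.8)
The plan is to realise $\act{\bar F}$ as a \emph{simple} map of closure systems $\clos T \to \clos {T'}$ over $\act F$, and then read off both the action on derivations and its functoriality from the already-established action of simple maps of closure systems on derivations. By \cref{def:closure-system-of-type-theory} the source decomposes as $\clos T = \StructuralRules \Sigma + \coprod_{R \in T} \clos R$ and the target as $\clos {T'} = \StructuralRules {\Sigma'} + \coprod_{R' \in T'} \clos {R'}$, so by the universal property of the sum of families it suffices to give a simple map over $\act F$ out of each summand of the source.

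First I would handle the structural summand using the simple map $\StructuralRules \Sigma \to \StructuralRules {\Sigma'}$ of \cref{prop:structural-rules-under-signature-map}, followed by the inclusion into $\clos {T'}$. Next, for each rule $R \in T$, I would invoke the lax naturality of \cref{prop:closure-system-of-raw-rule-under-signature-map} to obtain a simple map $\clos R \to \clos {\act F R}$ over $\act F$, which on an instantiation acts by $\act{(I,\Gamma)} R \mapsto \act{(\act F I, \act F \Gamma)} R = \act F(\act{(I,\Gamma)} R)$. Because $\bar F$ is a simple map of raw type theories over $F$, the rule of $T'$ indexed by $\bar F(R)$ is exactly $\act F R$, so $\clos {\act F R}$ is the $\bar F(R)$-component of $\coprod_{R'} \clos {R'}$; composing with that coproduct inclusion lands in $\clos {T'}$. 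Assembling these componentwise produces the desired $\act{\bar F} : \clos T \to \clos {T'}$ over $\act F$.

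Having built the simple map, the translation of derivations is immediate: the action of a simple map of closure systems (established earlier in this subsection) sends $D \in \derivation{\clos T}{H}{c}$ to $\act{\bar F} D \in \derivation{\clos {T'}}{\act F H}{\act F c}$, and specialising to $c = (\Gamma \typesjudgement J)$ with $\act F(\Gamma \typesjudgement J) = (\act F \Gamma \typesjudgement \act F J)$ (\cref{def:judgements-functorial}) yields the stated conclusion. The functoriality equations $\act{\idmap} = \idmap$ and $\act{(\bar F \circ \bar G)} = \act{\bar F} \circ \act{\bar G}$ would then follow from the functoriality in the signature map of the two ingredient constructions, combined with the functoriality of the simple-map action on derivations established earlier.

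I expect the only delicate point to be the index bookkeeping of the middle paragraph: confirming that $\act F R$ is precisely the summand of $\coprod_{R'} \clos {R'}$ picked out by $\bar F(R)$, and that both summand maps genuinely lie over the single base map $\act F : \Judg \Sigma \to \Judg {\Sigma'}$. This is routine once the definitions are unwound; nothing mathematically substantive remains, since the naturality propositions already in hand do all the real work.
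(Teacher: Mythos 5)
Your proposal is correct and follows essentially the same route as the paper, which proves the proposition by citing exactly the two ingredients you use: the simple map of structural rules from \cref{prop:structural-rules-under-signature-map} and the lax naturality of $\clos$ from \cref{prop:closure-system-of-raw-rule-under-signature-map}, assembled summand-by-summand and then fed into the action of simple closure-system maps on derivations. You have merely spelled out the bookkeeping that the paper leaves implicit.
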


\begin{proof}
  Direct from the functoriality and naturality properties of structural rules (\cref{prop:structural-rules-under-signature-map}) and of the closure systems associated to raw rules (\cref{prop:closure-system-of-raw-rule-under-signature-map}).
\end{proof}

\begin{corollary} \label{cor:derivations-functorial-signature-maps}
  A signature map $F : \Sigma \to \Sigma'$ acts on
  $D \in \derivation{T}{H}{(\Gamma \typesjudgement J)}$ to give a derivation
  $\act{F} D \in \derivation{\act{F} T}{\act{F} H}{\act{F}(\Gamma \typesjudgement J)}$, functorially so.
\end{corollary}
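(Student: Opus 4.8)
The plan is to obtain this as an immediate instance of \cref{prop:cl-functorial-simple-maps}, specialised to the canonical simple map from $T$ into its own translation $\act{F}T$. All the substantive work --- grafting derivations and transporting derivability --- is already packaged into that proposition, so the corollary only needs the right map fed into it, plus a check of functoriality.

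First I would isolate the relevant simple map. By the correspondence recorded just above \cref{prop:cl-functorial-simple-maps} --- that maps $T \to T'$ over $F$ correspond to maps $\act{F}T \to T'$ over $\idmap[\Sigma']$ --- the identity $\idmap : \act{F}T \to \act{F}T$ transposes to a canonical simple map $\bar{F} : T \to \act{F}T$ over $F$. Concretely, $\bar{F}$ sends each rule $R$ of $T$ to its translation $\act{F}R$, which is by definition the corresponding rule of $\act{F}T$. I would then \emph{define} $\act{F}D \defeq \act{\bar{F}}D$, where $\act{\bar{F}}$ is the translation of derivations supplied by \cref{prop:cl-functorial-simple-maps}. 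Because $\bar{F}$ lies over $F$, that proposition carries $D \in \derivation{T}{H}{(\Gamma \typesjudgement J)}$ to a derivation in $\derivation{\act{F}T}{\act{F}H}{(\act{F}\Gamma \typesjudgement \act{F}J)}$, and $\act{F}\Gamma \typesjudgement \act{F}J$ is exactly $\act{F}(\Gamma \typesjudgement J)$ by \cref{def:judgements-functorial}; so the target is as claimed.

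It then remains to verify $\act{\idmap[\Sigma]}D = D$ and $\act{(G \circ F)}D = \act{G}(\act{F}D)$ for composable $F$, $G$. The key observation is that the canonical simple maps are compatible with composition: $\bar{\idmap[\Sigma]}$ is the identity simple map on $T$, and the composite $\bar{G} \circ \bar{F} : T \to \act{G}(\act{F}T)$ coincides with $\bar{G \circ F}$ once one uses the (functorial) identification $\act{G}(\act{F}T) = \act{(G \circ F)}T$ of the action of signature maps on raw type theories. Granting this, the functoriality of $\clos{}$ in simple maps (\cref{prop:cl-functorial-simple-maps}) combined with the functoriality of the action of simple maps of closure systems on derivations (the earlier proposition of that kind) yields $\act{\bar{G \circ F}} = \act{\bar{G}} \circ \act{\bar{F}}$, whence $\act{(G \circ F)}D = \act{G}(\act{F}D)$; the identity case is analogous.

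I expect no genuine obstacle, since this is truly a corollary. The only point needing care is the bureaucratic bookkeeping: confirming that $\bar{F}$ transports hypotheses and conclusion precisely along $\act{F}$, and that the canonical maps $\bar{F}$ compose strictly after invoking functoriality of the action on raw type theories. Both are routine unwindings of the definitions of $\act{F}$ on rules, contexts, and judgements, so I would state them as such rather than belabour the calculation.
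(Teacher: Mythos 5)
Your proposal is correct and takes exactly the route of the paper, which proves the corollary in one line by applying \cref{prop:derivations-functorial-simple-maps} to the canonical simple map $T \to \act{F}T$ over $F$. Your additional remarks on checking functoriality via compatibility of the canonical simple maps with composition are a sound elaboration of the bookkeeping the paper leaves implicit.
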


\begin{proof}
 By \cref{prop:derivations-functorial-simple-maps}, using the canonical simple map $T \to \act{F}T$ over~$F$.
\end{proof}

We use the previously corollary quite frequently to translate a derivation over a raw type theory to its extension. We mostly leave such applications implicit, as they are easily detected.

Instantiations also preserve derivability, but this is a significantly more involved construction --- more so than one might expect --- bringing together many earlier constructions and lemmas, and relying in particular on almost all the properties of \cref{prop:instantiation-boilerplate}.

\begin{proposition} \label{cor:instantiation-acts-on-flattening}
  Given a raw type theory $T$ over $\Sigma$, an instantiation $I \in \Inst{\Sigma}{\Gamma}{\alpha}$ induces a closure system map $\act{(I,\Gamma)} : \clos{\mvextend{T}{\alpha}} \to \clos{T}$ over $\act{(I,\Gamma)} : \Judg{\mvextend{\Sigma}{\alpha}} \to \Judg{\Sigma}$, where $\mvextend{T}{\alpha}$ is the translation of~$T$ by the inclusion $\Sigma \to \mvextend{\Sigma}{\alpha}$.
\end{proposition}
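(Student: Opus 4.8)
The plan is to assemble the desired map from the two immediately preceding propositions, exploiting that both $\clos{\mvextend{T}{\alpha}}$ and $\clos{T}$ are \emph{sums} of closure systems, so that a map out of the source is determined summand by summand. First I would unfold the definitions (\cref{def:closure-system-of-type-theory}): since $\mvextend{T}{\alpha}$ is the translation of $T$ along $\Sigma \to \mvextend{\Sigma}{\alpha}$, it has the same index set as $T$, with rules $\mvextend{R}{\alpha}$ for $R \in T$. Hence
\begin{equation*}
  \clos{\mvextend{T}{\alpha}}
  = \StructuralRules (\mvextend{\Sigma}{\alpha}) + \coprod_{R \in T} \clos(\mvextend{R}{\alpha}),
  \qquad
  \clos{T} = \StructuralRules \Sigma + \coprod_{R \in T} \clos R .
\end{equation*}
By \cref{def:derivation-grafting}, to give a map $\clos{\mvextend{T}{\alpha}} \to \clos{T}$ over $\act{(I,\Gamma)}$ is to supply, for each rule of the source, a derivation of its image under $\act{(I,\Gamma)}$ over the target. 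As the source is a sum, it suffices to treat each summand separately, provided all the resulting derivations land in $\clos{T}$ over the common base map $\act{(I,\Gamma)}$.

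For the structural summand $\StructuralRules (\mvextend{\Sigma}{\alpha})$, \cref{prop:instantiation-of-structural-rules} directly furnishes a closure system map to $\StructuralRules \Sigma$ over $\act{(I,\Gamma)}$; post-composing with the inclusion of $\StructuralRules \Sigma$ as a summand of $\clos{T}$ (a simple map over the identity) gives the required derivations. For each summand $\clos(\mvextend{R}{\alpha})$, \cref{prop:instantiation-of-closure-system-of-raw-rule}, applied with $\beta = \alpha$ and the same $I$, $\Gamma$, furnishes a closure system map $\clos(\mvextend{R}{\alpha}) \to \clos R + \StructuralRules \Sigma$ over $\act{(I,\Gamma)}$; both target summands sit inside $\clos{T}$, so composing again with the inclusion yields derivations in $\clos{T}$. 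Collecting these componentwise maps produces the single map $\act{(I,\Gamma)} : \clos{\mvextend{T}{\alpha}} \to \clos{T}$ over $\act{(I,\Gamma)}$, as required.

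The genuine mathematical content has already been discharged in the two cited propositions --- in particular the handling of the associativity renamings via the substitution structural rule --- so the only real obstacle here is bookkeeping: checking that the two component maps share the base map $\act{(I,\Gamma)}$ (they do, by inspection of the respective statements) and that their targets both embed as summands of $\clos{T}$. The one point deserving a word of care is that maps of closure systems, being Kleisli-type maps, compose with the simple inclusion maps of summands; this is exactly the composition recorded after the grafting constructions in \cref{sec:preliminaries}, and it is what lets the per-summand derivations be packaged uniformly into a single map landing in $\clos{T}$.
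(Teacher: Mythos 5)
Your proposal is correct and takes essentially the same route as the paper, whose own proof simply states that the result is direct from \cref{prop:instantiation-of-structural-rules} and \cref{prop:instantiation-of-closure-system-of-raw-rule}; you have merely spelled out the summand-by-summand bookkeeping that the paper leaves implicit.
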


\begin{proof}
  Again, direct from similar properties of structural rules (\cref{prop:instantiation-of-structural-rules}) and closure systems of raw rules (\cref{prop:instantiation-of-closure-system-of-raw-rule}).
\end{proof}

\begin{corollary} \label{cor:instantiation-of-derivations}
  Let $T$ be a raw type theory over~$\Sigma$.
  An instantiation $I \in \Inst{\Sigma}{\Gamma}{\alpha}$ acts on a derivation $D \in \derivation{\mvextend{T}{\alpha}}{H}{(\Delta \typesjudgement J)}$ to give the \defemph{instantiation} $\act{(I, \Gamma)} D \in \derivation{T}{\act{(I, \Gamma)} H}{\act{I}(\Delta \typesjudgement J)}$.
\end{corollary}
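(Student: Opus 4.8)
The plan is to read off this corollary directly from Proposition~\ref{cor:instantiation-acts-on-flattening} by invoking the general principle that a map of closure systems acts on derivations. First I would unfold the definitions of both sides: by \cref{def:closure-system-of-type-theory}, a derivation $D \in \derivation{\mvextend{T}{\alpha}}{H}{(\Delta \typesjudgement J)}$ is precisely a derivation over the closure system $\clos{\mvextend{T}{\alpha}}$ in the sense of \cref{def:closure-system-derivation}, and the sought output $\act{(I,\Gamma)}D$ is to be a derivation over $\clos{T}$. So the task reduces to transporting a derivation along a map of closure systems.

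Next, Proposition~\ref{cor:instantiation-acts-on-flattening} supplies exactly such a map, namely the closure system map $\act{(I,\Gamma)} : \clos{\mvextend{T}{\alpha}} \to \clos{T}$ lying over the underlying function $\act{(I,\Gamma)} : \Judg{\mvextend{\Sigma}{\alpha}} \to \Judg{\Sigma}$ on judgements. By the clause establishing that closure system maps act on derivations (the construction immediately following \cref{lem:hypotheses-grafting}, which proceeds by grafting the supplied derived-rule derivations in at each rule node of $D$), this map carries $D$ to a derivation of $\act{(I,\Gamma)}(\Delta \typesjudgement J)$ from the hypotheses $\act{(I,\Gamma)}H = \famtuple{\act{(I,\Gamma)}h}{h \in H}$, over $\clos{T}$. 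We take this to be $\act{(I,\Gamma)}D$, and reading it back as a derivation in $T$ gives an element of $\derivation{T}{\act{(I,\Gamma)}H}{\act{(I,\Gamma)}(\Delta \typesjudgement J)}$.

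I expect no real obstacle here, since all the substantive content --- reconstructing every structural-rule instance and every specific-rule instance over $\mvextend{\Sigma}{\alpha}$ as a rule derivable over $\Sigma$, together with the associativity-renaming substitution steps wrapping each judgement --- is already discharged in Proposition~\ref{cor:instantiation-acts-on-flattening} and the results it cites. The only point needing a moment's bookkeeping is the identification of $\act{(I,\Gamma)}(\Delta \typesjudgement J)$ with the judgement $\act{I}(\Delta \typesjudgement J)$ as written in the statement, which is immediate from \cref{def:instantiate-judgement} (the context component being $\ctxextend{\Gamma}{\act{I}\Delta}$ and the thesis $\act{I}J$). Thus the corollary is obtained simply by specialising the ``acts on derivations'' construction to the closure system map of the preceding proposition.
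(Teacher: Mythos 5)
Your proposal is correct and matches the paper's intended argument exactly: the corollary is left without an explicit proof precisely because it follows by applying the ``closure system maps act on derivations'' construction (the grafting-based definition after \cref{lem:hypotheses-grafting}) to the map supplied by \cref{cor:instantiation-acts-on-flattening}. Your bookkeeping remark identifying $\act{(I,\Gamma)}(\Delta \typesjudgement J)$ with $\act{I}(\Delta \typesjudgement J)$ via \cref{def:instantiate-judgement} is also the right way to reconcile the notation in the statement.
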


Note that the hypotheses $H$ and the judgement $\Delta \typesjudgement J$ in the statement reside in the translation $\mvextend{T}{\alpha}$ by the inclusion $\Sigma \to \mvextend{\Sigma}{\alpha}$.

\subsection{Summary}

\emph{Raw type theories} give a conceptually minimal way to make precise what is meant by traditional specifications of type theories, and a similarly minimal amount of data from which to define \emph{derivability} on judgements.

As the name suggests, raw type theories are not a finished product.
Type theories in nature almost always satisfy further well-formedness properties, and are rejected by audiences if they do not.
In the next two sections, we will discuss these well-formedness properties.

In some ways, raw type theories may therefore be viewed as an unnatural or undesirable notion.
However, most of the well-behavedness properties --- or rather, the conditions on rules implying well-behavedness --- themselves involve checking derivability of certain judgements.
So raw type theories, as the minimal data for defining derivability, give a natural intermediate stage on the way to our main definition of “reasonable” type theories.


\section{Well-behavedness properties}
\label{sec:well-behavedness}

In this section we identify easily-checked syntactic properties of the rules specifying a type theory, and prove basic fitness-for-purpose meta-theorems, which together articulate the rules-of-thumb that researchers habitually use to verify that some collection of inference rules defines a “reasonable” type theory.

\subsection{Acceptable rules}
\label{sec:acceptable-rules}

Not all raw rules are deemed reasonable from a type-theoretic point of view.
But what standard of “reasonable” are we aiming to delineate?
Essentially, the same as for the axioms of a theory in first-order logic: the axioms must be well-formed enough to be given some meaning, although that meaning may be “false”, “wrong”, or otherwise unexpected.

Consider for instance the following possible modifications of the rule for $\symapp$, all written as raw rules:
\begin{gather}
\label{eq:example-app-1}
\inferrule{
  \istype {} {\symA} \\ 
  \istype {x \of \symA} {\symB(x)} \\
  \isterm {} {\symf} {\synPi[\symA,\symB(x)]} \\
  \isterm {} {\syma} {\symA}
}{
  \isterm {} {\symapp{(\symA,\symB(x),\symf,\syma)}} {\symB(\syma)}
}
\\[1ex]
\label{eq:example-app-2}
\inferrule{
  \istype {} {\symA} \\ 
  \istype {x \of \symA} {\symB(x)} \\
  \isterm {} {\symf} {\symA} \\
  \isterm {} {\syma} {\symA}
}{
  \isterm {} {\symapp{(\symA,\symB(x),\symf,\syma)}} {\symB(\syma)}
}
\\[1ex]
\label{eq:example-app-3}
\inferrule{
  \istype {} {\symA} \\ 
  \istype {x \of \symA} {\symB(x)} \\
  \isterm {} {\symf} {\synPi[\symA,\symB(x)]} \\
  \isterm {} {\syma} {\synPi[\symA,\symB(x)]}
}{
  \isterm {} {\symapp{(\symA,\symB(x),\symf,\syma)}} {\symB(\syma)}
}
\end{gather}

The first is the usual rule for $\symapp$, and should certainly be considered acceptable.

The second asks the argument $\symf$ to be of type $\symA$.
This is “wrong” under the usual reading of $\symPi$ and $\symapp$, but not entirely meaningless: one can introduce $\symapp$ with this typing rule, and obtain a well-behaved (if bizarre) type theory.
So this should be accepted as a type-theoretic rule.

The third asks the argument $\syma$ to be of type $\synPi[\symA,\symB(x)]$.
This is “not even wrong”: the conclusion purports to introduce a term of type $\symB(\syma)$, but that is not a well-formed type, since $\symB$ expects an argument of type $\symA$, so $\syma$ is not suitable (at least in the absence of other rules implying that $\eqtype{}{\symA}{\synPi[\symA,\symB(x)]}$).
This will therefore \emph{not} be an acceptable rule.

Another unacceptable rule would be:
\begin{gather}
  \label{eq:example-app-4}
  \inferrule{
    \istype {} {\symA} \\ 
    \istype {x \of \symA} {\symB(x)} \\
    \isterm {} {\symf} {\synPi[\symA,\symB(x)]} \\
    \isterm {} {\syma} {\symA}  \\
    \isterm {} {\syma} {\synPi[\symA,\symB(x)]} \\
  }{
    \isterm {} {\symapp{(\symA,\symB(x),\symf,\syma)}} {\symB(\syma)}
  }
\end{gather}
This is again clearly nonsense: it introduces $\syma$ twice, with two different types.

There are rules which are not uncommon in practice, but which we will not accept directly, such as:
\begin{gather}
\label{eq:example-app-5}
\inferrule{
   \isterm {} {\symf} {\synPi[\symA,\symB(x)]} \\
   \isterm {} {\syma} {\symA}
}{
   \isterm {} {\symapp{(\symA,\symB(x),\symf,\syma)}} {\symB(\syma)}
}
\end{gather}
While the rule is completely reasonable, making sense of it is rather subtle: checking, for instance, that $\symB(\syma)$ in the conclusion is well-formed requires applying some kind of inversion principle, to the type $\synPi[\symA,\symB(x)]$ from the premises.
Whether such an inversion principle is available depends on the particularities of the type theory under consideration.
In general, we want acceptable rules to be more straightforwardly and robustly well-behaved, so we expect that every metavariable used by the rule is explicitly introduced by some (unique) premise.

Finally, some rules have variant forms given by moving simple premises into the context of the conclusion.
For example, the rule for application is sometimes given as
\begin{equation}
\label{eq:example-app-6}
\inferrule{
  \istype {} {\symA} \\ 
  \istype {x \of \symA} {\symB(x)}
}{
 \isterm
   {x \of \symA, y \of \synPi[\symA, \symB(x)]}
   {\symapp{(\symA,\symB(x),y,x)}}
   {\symB(x)}
}
\end{equation}

This variant has been called the \emph{hypothetical} form, in contrast to the \emph{universal} form~\eqref{eq:example-app-1}.
With substitution included as a structural rule, the two forms are equivalent: each is derivable from the other.
In the absence of a substitution rule, they are not equivalent; the hypothetical form is too weak.
We have also heard it argued that the universal form should be seen as conceptually prior.
So both forms are arguably reasonable; but the universal form \eqref{eq:example-app-1}, with empty conclusion context, has the clearer claim, and no generality is lost by restricting to such forms.

Summarising the above discussion, there are several simple syntactic criteria commonly used as rules-of-thumb to determine “reasonability” of rules.
We now formally define these criteria, and collect them into a definition of \emph{acceptability} of rules.

\begin{definition}
  \label{def:tight-rule}%
  Suppose $R$ is a raw rule with arity $\arity{R}$ over a signature $\Sigma$. We say that $R$ is \defemph{tight} when
  there exists a bijection $\beta$ between the arguments of $\arity{R}$ and the object premises of $R$,
  such that for each argument $i$ of $\arity{R}$,
  \begin{enumerate}
    \item\label{item:tight-rule-ctx} the context of the premise $\beta(i)$ has the scope $\argbinder{\arity{R}}{i}$;
    \item\label{item:tight-rule-jf} the judgement form of the premise $\beta(i)$ is $\argclass{\arity{R}}{i}$;
    \item\label{item:tight-rule-hd} the head expression of the premise $\beta(i)$ is $\synmeta{i}(\fammap{\synvar{j}}{j \in \argbinder{\arity{R}}{i}})$.
  \end{enumerate}
\end{definition}

Note that the bijection~$\beta$ is unique, if it exists.
The definition of tightness is admittedly a bit technical, but it captures a well-formedness condition of rules which is familiar but infrequently discussed explicitly. Namely, a rule is tight if its object premises provide the ``typing'' of its metavariable symbols.

Tightness alone does not suffice to make a rule reasonable, e.g., the rule~\eqref{eq:example-app-3} is tight but still broken because the type expression $\symB(\syma)$ is senseless. We need another condition which ensures that the type and term expressions appearing in the rule make sense.

\begin{definition}
  To each judgement $\Gamma \types J$, we associate the family of \defemph{presuppositions} $\Presup {(\Gamma \types J)}$, defined as the judgements formed over $\Gamma$ by placing the boundary slots of $J$ in the head position as follows:
  \begin{align*}
  \Presup {(\istype \Gamma A)} &\defeq [ \; ], \\
  \Presup {(\isterm \Gamma s A)} &\defeq [ \istype \Gamma A ], \\
  \Presup {(\eqtype \Gamma A B)} &\defeq [ \istype \Gamma A, \istype \Gamma B ], \\
  \Presup {(\eqterm \Gamma s t A)} &\defeq [ \istype \Gamma A, \isterm \Gamma s A, \isterm \Gamma t A ].
  \end{align*}
\end{definition}

We shall need to know later on that presuppositions are natural with respect to the action of signature maps, instantiations, and raw substitutions.

\begin{proposition}
  \label{prop:presuppositions-action-signature-map}
  Let $\Gamma \types J$ be a judgement over~$\Sigma$, and $F : \Sigma \to \Sigma'$ a signature map. Then $\Presup {(\act{F} \Gamma \types \act{F} J)} = \act{F}(\Presup {(\Gamma \types J)})$.
\end{proposition}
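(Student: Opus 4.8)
The plan is to prove the identity by a direct case analysis on the judgement form $\phi$ of $J$, since both $\Presup{}$ and the action $\act{F}$ are defined by precisely such a case split. The one conceptual observation that makes every case immediate is that $\act{F}$ alters a judgement only by replacing each of its slot-expressions $e$ with $\act{F}e$, leaving the underlying context scope and the slot-structure untouched (\cref{def:judgements-functorial}), whereas $\Presup{}$ builds its output judgements purely by copying the boundary expressions of $J$ into head position over the same context $\Gamma$, without otherwise modifying them. The two operations thus touch disjoint aspects of the data—$\act{F}$ transforms expressions pointwise, while $\Presup{}$ merely re-slots them—and so they commute.

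Concretely, I would unfold both sides in each of the four cases. For the term case $J = (\isterm{\Gamma}{s}{A})$, the left-hand side is $\Presup{(\isterm{\act{F}\Gamma}{\act{F}s}{\act{F}A})} = [\,\istype{\act{F}\Gamma}{\act{F}A}\,]$, using that $\act{F}(\isterm{\Gamma}{s}{A}) = \isterm{\act{F}\Gamma}{\act{F}s}{\act{F}A}$; the right-hand side is $\act{F}(\Presup{(\isterm{\Gamma}{s}{A})}) = \act{F}([\,\istype{\Gamma}{A}\,]) = [\,\act{F}(\istype{\Gamma}{A})\,] = [\,\istype{\act{F}\Gamma}{\act{F}A}\,]$, where the middle equality is just that the action on a family is applied pointwise. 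The two equality cases, $\eqtype{\Gamma}{A}{B}$ and $\eqterm{\Gamma}{s}{t}{A}$, run identically in spirit, matching term-by-term in the respective families; and the type case $\istype{\Gamma}{A}$ is the degenerate one, where both sides are the empty family—here it matters only that $\act{F}$ of the empty family is again empty.

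I do not anticipate any genuine obstacle. The only point requiring a moment's care is keeping the bookkeeping of slots straight: verifying that the boundary expression placed in head position on the left (after applying $\act{F}$ to $J$) is literally $\act{F}$ of the boundary expression placed in head position on the right. This holds because, in each judgement form, the boundary slots of $\act{F}J$ are by definition the $\act{F}$-images of the boundary slots of $J$, taken over the context $\act{F}\Gamma$; hence forming a presupposition and then applying $\act{F}$ genuinely produce the same expression in the same slot over the same context. An entirely analogous argument, which I would only mention in passing, establishes the corresponding naturality statement for boundaries.
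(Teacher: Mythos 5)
Your proposal is correct and follows essentially the same route as the paper, which simply declares the identity clear and illustrates it with the term-judgement case ($\Presup{(\isterm{\act{F}\Gamma}{\act{F}s}{\act{F}A})} = [\istype{\act{F}\Gamma}{\act{F}A}] = \act{F}[\istype{\Gamma}{A}]$); you merely spell out the remaining cases, which unfold identically.
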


\begin{proof}
  This is clear, for instance the presupposition of $\isterm{\act{F} \Gamma}{\act{F} s}{\act{F} A}$ is $\istype{\act{F} \Gamma}{\act{F} A}$, which is precisely what we get when $F$ acts on $\istype{\Gamma}{A}$, the presupposition of $\isterm{\Gamma}{s}{A}$.
\end{proof}

The reasoning that established the analogous statements about the actions of instantiations and raw substitutions is similarly easy.

\begin{propositionwithqed}
  \label{prop:presuppositions-action-instantiation}
  Let $\Gamma \types J$ be a judgement over a metavariable extension $\mvextend{\Sigma}{\alpha}$ and $I \in \Inst{\Sigma}{\gamma}{\alpha}$ an instantiation.
  Then $\Presup {(\act{I} \Gamma \types \act{I} J)} = \act{I}(\Presup {(\Gamma \types J)})$.
\end{propositionwithqed}

\begin{propositionwithqed}
  \label{prop:presuppositions-action-substitution}
  Let $\Gamma \types J$ be a judgement and $f : \Delta \to \Gamma$ a raw substitution. Every presupposition of $\Delta \types \tca{f} J$
  has the form $\Delta \types \tca{f} J'$, where $\Gamma \vdash J'$ is a presupposition of $\Gamma \types J'$.
\end{propositionwithqed}

There is a weaker and a stronger condition that we can impose on a rule with regards to the presuppositions of its conclusion.

\begin{definition}%
  \label{def:weakly-presuppositive-rule}%
  Let $T$ be a raw type theory over a signature $\Sigma$ and $R$ a raw rule over~$\Sigma$:
  \begin{enumerate}
  \item a raw rule $R$ is \defemph{weakly presuppositive over~$T$} when every presupposition of the conclusion of~$R$ is derivable in $T$ (translated from $\Sigma$ to $\mvextend{\Sigma}{\arity{R}}$) from the premises of~$R$ and the presuppositions of the premises of~$R$,

  \item a raw rule~$R$ is \defemph{presuppositive over~$T$} when all presuppositions of the conclusion and of the premises of~$R$ are derivable in $T$ (translated from $\Sigma$ to $\mvextend{\Sigma}{\arity{R}}$) from the premises of~$R$.
  \end{enumerate}
\end{definition}

As far as derivability is concerned, weakly presuppositive rules are good enough, for a rule cannot be applied unless its premises have already been derived, in which case their presuppositions will be derivable as well --- which is the gist of the proof of \cref{thm:presuppositions}.
However, if we were to give a meaning to a raw rule on its own, we would be hard-pressed to explain what the premises are about, unless their presuppositions were derivable as well, hence we take the stronger variant as the standard one.

\begin{definition}
  \label{def:acceptable-rule}%
  A raw rule $R$ is \defemph{acceptable} for a raw type theory $T$ if it is tight,
  presuppositive over~$T$, and has empty conclusion context.
\end{definition}

\begin{example}
  \parbox{0pt}{}
  \begin{enumerate}

  \item The above rules~\eqref{eq:example-app-1},~\eqref{eq:example-app-2},~\eqref{eq:example-app-3}, and~\eqref{eq:example-app-6} are tight.

  \item The above rules~\eqref{eq:example-app-1},~\eqref{eq:example-app-2},~\eqref{eq:example-app-4}, and~\eqref{eq:example-app-6} are presuppositive.

  \item The rule
    \begin{equation*}
      \infer{ }{\istype{}{\symA}}
    \end{equation*}
    which allows us to infer that every type expression is a type, is not tight.

  \item If $S$ is a type symbol with the empty arity, the rule
    \begin{equation*}
      \infer { } {\istype {} {S ()}}
    \end{equation*}
    is presuppositive and tight.

  \item Symmetry of type equality comes in two versions:
  \begin{equation*}
    \infer{
      \eqtype{}{\symA}{\symB}
    }{
      \eqtype{}{\symB}{\symA}
    }
    \qquad\qquad
    \infer{
      \istype{}{\symA}
      \\
      \istype{}{\symB}
      \\
      \eqtype{}{\symA}{\symB}
    }{
      \eqtype{}{\symB}{\symA}
    }
  \end{equation*}
  The left-hand one is not tight and is presuppositive, and the right-hand one is tight and presuppositive.
  \end{enumerate}
\end{example}

\begin{proposition}
  The congruence rule associated to an acceptable object rule is acceptable.
\end{proposition}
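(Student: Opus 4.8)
The plan is to verify the three defining conditions of \cref{def:acceptable-rule} for $\congrule{R}$ in turn, disposing of empty conclusion context and tightness quickly and concentrating the effort on presuppositivity. Empty conclusion context is immediate: the conclusion of $\congrule{R}$ is $\tca{(\ell,r)}C$, whose context is $\act{\ell}\Gamma_C$ for $\Gamma_C$ the context of $C$; since $R$ is acceptable $\Gamma_C$ is empty, and signature maps preserve scopes, so $\act{\ell}\Gamma_C$ is empty too. For tightness, I would first note that the object premises of $\congrule{R}$ are exactly the left and right copies of the object premises of $R$: translation along a signature map preserves judgement form, so $\act{\ell}P_i$ and $\act{r}P_i$ are object judgements precisely when $P_i$ is, whereas the third family of premises $\tca{(\ell,r)}P_k$ are equality judgements by construction. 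Since $R$ is tight there is a bijection $\beta$ between $\args \arity{R}$ and the object premises $I_{\ob}$, and I would take $\beta + \beta$ as the bijection between $\args(\arity{\congrule{R}}) = \args \arity{R} + \args \arity{R}$ and the object premises of $\congrule{R}$, sending $\inl i$ to the left copy of $\beta(i)$ and $\inr i$ to the right copy. Conditions (i)--(iii) of \cref{def:tight-rule} then transfer from $R$, using that $\argbinder{\arity{\congrule{R}}}{\inl i} = \argbinder{\arity{R}}{i}$ (and symmetrically on the right), that $\ell$ and $r$ preserve context scopes and judgement forms and act as the identity on variables, and that $\ell$ carries the metavariable symbol $\synmeta{i}$ to $\synmeta{\inl i}$ while $r$ carries it to $\synmeta{\inr i}$.

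For presuppositivity, the easy half is the presuppositions of the left and right premises. By \cref{prop:presuppositions-action-signature-map} the presuppositions of $\act{\ell}P_i$ are $\act{\ell}$ applied to the presuppositions of $P_i$; presuppositivity of $R$ derives the latter from the premises of $R$ over $\mvextend{T}{\arity{R}}$, and applying \cref{cor:derivations-functorial-signature-maps} to $\ell$ transports these to derivations from the $\act{\ell}P_{i'}$, which are premises of $\congrule{R}$. Here I would use the computation $\act{\ell}(\mvextend{T}{\arity{R}}) = \mvextend{T}{\arity{\congrule{R}}}$, valid because $\ell$ fixes $\Sigma$. The right premises are symmetric via $r$.

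The real work lies in the presuppositions of the equality premises $\tca{(\ell,r)}P_k$ and of the conclusion $\tca{(\ell,r)}C$. Unwinding \cref{def:judgement-associated-congruence}, each such presupposition is one of three things: an $\act{\ell}$-translate of a presupposition of the underlying object judgement (handled as above); a full copy such as $\act{\ell}P_k$, $\act{\ell}C$ or $\act{r}C$ of an object judgement; or a ``cross'' judgement placing the $r$-copy head and type in the $\ell$-copy context, for instance $\isterm{\act{\ell}\Gamma_k}{\act{r}t_k}{\act{\ell}A_k}$. To produce the $\act{r}$-copies $\act{r}C$ I would use that $R$ is a rule of $T$ (the setting in which congruence rules are introduced), so that applying $R$ to its own premises derives $C$ from $\{P_i\}$ over $\mvextend{T}{\arity{R}}$, whence $\act{r}C$ is derivable from the premises of $\congrule{R}$ after translating by $r$. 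The cross judgements I would obtain by transporting the $r$-copy premises into the $\ell$-context: form the identity-on-variables substitution $\act{\ell}\Gamma_k \to \act{r}\Gamma_k$, prove it well-typed using the variable rule together with conversion along the pointwise equalities $\eqtype{\act{\ell}\Gamma_k}{\act{\ell}(\Gamma_k)_p}{\act{r}(\Gamma_k)_p}$, apply the substitution structural rule, and finally convert the underlying type from $\act{r}A_k$ to $\act{\ell}A_k$.

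Everything above rests on a doubling lemma, which I expect to be the main obstacle: for every object judgement $\Delta \types J$ derivable from the premises of $R$, the associated equality $\tca{(\ell,r)}(\Delta \types J)$ is derivable from the premises of $\congrule{R}$. I would prove it by induction on the derivation — the hypothesis case yields exactly the equality premises; the equivalence, conversion, and substitution cases are discharged by the corresponding structural rules (in particular the equality-substitution rules); and a use of some symbol rule $R'$ of $T$ is doubled by invoking the congruence rule of $R'$, which is the point at which one must assume that $T$ is congruous. The delicate issue is circularity: the derivation witnessing $\istype{\Gamma_k}{A_k}$, or the derivability of $C$, might in principle appeal to $R$ itself, which one cannot double without $\congrule{R}$. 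Controlling this is precisely the role of the well-foundedness of premises and rules, which guarantees that the types $A_k$ and $(\Gamma_k)_p$ are formed strictly before $R$ is invoked, keeping the induction well-founded.
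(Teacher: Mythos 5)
Your treatment of the empty conclusion context, of tightness, and of the presuppositions of the $\act{\ell}$- and $\act{r}$-copies of the premises coincides with the paper's proof: the same doubled bijection $\beta_{\congrule{R}}(\inl(i)) = \inl(\beta_R(i))$, $\beta_{\congrule{R}}(\inr(j)) = \inr(\beta_R(j))$, the same appeal to \cref{prop:presuppositions-action-signature-map} and \cref{cor:derivations-functorial-signature-maps}, and the same computation $\act{\ell}(\mvextend{T}{\arity{R}}) = \mvextend{T}{\arity{\congrule{R}}}$. You diverge on the equality premises and the conclusion. The paper disposes of these in two sentences (the presuppositions of $\tca{(\ell,r)}P_k$ ``are derived by the corresponding object premises'', the conclusion's ``by appeal to the rule $R$ itself for left and right hand side''), whereas you observe that the third presupposition of a term-equality premise, $\isterm{\act{\ell}\Gamma_k}{\act{r}t_k}{\act{\ell}A_k}$, sits in the $\ell$-context with the $\ell$-type, so the $\inr(k)$ premise only yields it after a context change and a conversion along $\eqtype{}{\act{\ell}A_k}{\act{r}A_k}$; and that deriving such equalities for compound boundary expressions like $\symPi(\symA,\symB(x))$ needs congruence rules for the symbols of $\Sigma$ --- hence your doubling lemma and the hypothesis that $T$ is congruous. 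This is a genuinely more elaborate route than the paper's, and it is the honest way to fill in what the paper's two sentences elide; the price is that congruousness of $T$ and membership of $R$ in $T$ become hypotheses, harmless in the intended setting (acceptable theories are congruous, and the paper's own appeal to ``the rule $R$ itself'' likewise presumes $R$ is available in $T$) but not literally in the statement.

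One element of your proposal is off: the closing worry about circularity and well-foundedness. The doubling lemma is proved by structural induction on a given derivation of $\Delta \types J$ from the premises of $R$; derivations are well-founded trees by construction (\cref{def:closure-system-derivation}), so the induction terminates irrespective of any order on the rules of $T$, and each rule application --- including applications of $R$ itself --- is simply replaced by an application of the corresponding congruence rule, available in $T$ by congruousness. Well-foundedness of premises and rules is not a hypothesis here, and \cref{ex:cyclic-quantifier,ex:type-in-type} show that acceptable theories need not admit one; fortunately nothing in your argument actually requires it.
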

\begin{proof}
  Let~$R$ be a tight and presuppositive raw object rule over a raw type theory~$T$ with premises $\famtuple{\Gamma_i \types J_i}{i \in I}$.
  There exists a bijection~$\beta_R$ between object premises of~$R$ and the arguments of~$\arity R$.

  \cref{def:congruence-rule} lays out the associated congruence rule~$\congrule{R}$. Its arity is $\arity{\congrule{R}} = \arity R + \arity R$ and its premises are indexed by $I + I + I_{\ob}$, where $I_{\ob}$ is the set of object premises of~$R$.
  The bijection $\beta_{\congrule{R}}$ witnessing tightness of~$\congrule{R}$ is given by
  \begin{equation*}
    \beta_{\congrule{R}} (\inl(i)) \defeq \inl(\beta_R(i))
    \qquad\text{and}\qquad
    \beta_{\congrule{R}} (\inr(j)) \defeq \inr(\beta_R(j)).
  \end{equation*}
  Let us verify that the properties for tightness of~$\congrule{R}$ required in \cref{def:tight-rule} follow directly from the tightness of~$R$.
  For any $\inl(i) \in  \args {\arity{\congrule{R}}}$:
  \begin{enumerate}

  \item[\eqref{item:tight-rule-ctx}] The context of the premise $\beta_{\congrule{R}}(\inl(i)) = \inl(\beta_R(i))$ is $\act \ell \Gamma_i$. The signature map~$\ell : \mvextend{\Sigma}{\arity R} \to \mvextend{\Sigma}{\arity{\congrule{R}}}$ does not change the underlying scope of~$\Gamma_i$, and thus~$\act \ell \Gamma_i$ has the same underlying scope as~$\Gamma_i$, which equals $\argbinder {\arity R} i$ because~$R$ is tight. Furthermore, $\argbinder {\arity R} i = \argbinder {\arity{\congrule{R}}} (\inl(i))$, as required.

  \item[\eqref{item:tight-rule-jf}] The premise~$\beta_{\congrule{R}}(\inl(i))$ has judgement form~$\argclass{\arity{\congrule{R}}}{\inl(i)}$ by the analogous reasoning.

  \item[\eqref{item:tight-rule-hd}] The head of the premise~$\beta_{\congrule{R}}(\iota_i(i))$ is~$\act \ell e$ where $e = \synmeta{i}(\fammap{\synvar{j}}{j \in \argbinder{\arity{R}}{i}})$ by tightness of~$R$. We need to show that $\act \ell e = \synmeta{\inl(i)}(\fammap{\synvar{j}}{j \in \argbinder{\arity{\congrule{R}}}{\inl(i)}})$, but this equation holds by the definitions of~$\ell$ and of~$\arity{\congrule{R}}$.
  \end{enumerate}
  The case of $\inr(j) \in \args {\arity{\congrule{R}}}$ is symmetric.

  We also need to show that all presuppositions of the premises and the conclusion of~$\congrule{R}$ are derivable in~$T_{\congrule{R}}$ from the premises of~$\congrule{R}$, where~$T_{\congrule{R}} = \act {\inl{}} \circ T$ is the translation of~$T$ along $\inl : \Sigma \to \mvextend \Sigma {\arity{\congrule{R}}}$.

  Consider the premise $\Gamma_{\inl(i)} \types J_{\inl(i)}$ at index $\inl(i)$ for some $i \in I$. By \cref{prop:presuppositions-action-signature-map}, a presupposition of this premise is a presupposition $P = (\Gamma_i \types J')$ of the corresponding premise in~$R$, translated along the signature map~$\ell$.
  By presuppositivity of~$R$, the judgement~$P$ is derivable from~$T_R = \act {\inl{}} \circ T$, the translation of~$T$ along $\inl{} : \Sigma \to \mvextend \Sigma {\arity R}$. By \cref{cor:derivations-functorial-signature-maps}, we can translate such a derivation of~$P$ along~$\act \ell$, yielding a derivation in~$T' = \act \ell \circ T_R$, where~$T'$ is~$T_R$ translated along~$\ell$. But $T' = \act \ell \circ T_R = \act \ell \circ \act {\inl{}} \circ T = \act {\inl{}} T = T_{\congrule{R}}$, so we obtain a derivation in the correct theory.

  The case of a premise indexed by $\inr(j)$ with $j \in I$ is similar, but the last step requires translation along the signature map $r = id_{\mvextend \Sigma {\arity R}} + \inr$ instead, mapping the metavariable symbols of $\arity R$ to the right-hand side metavariables of~$\congrule{R}$.

  A premise~$P$ indexed by $\iota_2(k)$ is an equality associated to the $k$-th object premise of~$R$. The presuppositions of~$P$ are derived by the corresponding object premises~$\inl(k)$ and~$\inr(k)$, and in the case of a term equation, the presupposition of the left-hand side.

  A presupposition of the conclusion is derivable by appeal to the rule~$R$ itself for left and right hand side of the equation. In case~$\congrule{R}$ is a term equation, the type judgement arising as presupposition of the conclusion of~$R$ is derivable in~$T_R$ by presuppositivity of~$R$, and can be translated along~$\act \ell$ in the same way that we treated the left-hand copies of the premises.
\end{proof}

\begin{proposition}%
  \label{prop:structural-rules-acceptable}
  The raw structural rules, i.e., the equivalence relation rules and the conversion rules are acceptable for any type theory.
\end{proposition}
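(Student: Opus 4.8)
The plan is to verify the three clauses of \cref{def:acceptable-rule} — tightness, presuppositivity, and empty conclusion context — for each of the equivalence relation rules and conversion rules in turn. The third clause is immediate: each of these rules is displayed with an empty conclusion context, so there is nothing to check there.

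For tightness I would exhibit the bijection $\beta$ of \cref{def:tight-rule} directly. In every one of these rules the metavariable symbols are precisely the heads of the object premises. For instance, term transitivity has arity $[(\Ty,0),(\Tm,0),(\Tm,0),(\Tm,0)]$ with metavariables $\symA,\symb{s},\symb{t},\symb{u}$, and its four object premises $\istype{}{\symA}$, $\isterm{}{\symb{s}}{\symA}$, $\isterm{}{\symb{t}}{\symA}$, $\isterm{}{\symb{u}}{\symA}$ type them one apiece; the equality premises $\eqterm{}{\symb{s}}{\symb{t}}{\symA}$ and $\eqterm{}{\symb{t}}{\symb{u}}{\symA}$ are not object premises and so play no role. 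Since every metavariable here has binder scope $\emptyscope$, each corresponding object premise sits in the empty context, carries the syntactic class prescribed by the arity, and has head exactly $\synmeta{i}(\emptyfam)$; thus the three conditions of \cref{def:tight-rule} hold by construction. The same inspection dispatches every rule in both families.

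The substantive clause is presuppositivity. The key observation is that these rules were deliberately written with all their boundary premises present, so that the presuppositions of each premise already occur among the premises: e.g.\ the presuppositions of $\eqterm{}{\symb{s}}{\symb{t}}{\symA}$ are $\istype{}{\symA}$, $\isterm{}{\symb{s}}{\symA}$, $\isterm{}{\symb{t}}{\symA}$, all three of which are premises of term symmetry and term transitivity. Hence for every premise its presuppositions are derivable as hypotheses. For the conclusion, the same holds for all the equivalence relation rules and for term conversion: the sole presupposition $\istype{}{\symB}$ of the conclusion $\isterm{}{\symb{s}}{\symB}$ of term conversion is itself a premise, and analogously throughout.

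The one case that requires an actual derivation — and the only place where the proof is not mere inspection — is the conclusion of the term-equation conversion rule. Its conclusion $\eqterm{}{\symb{s}}{\symb{t}}{\symB}$ has presuppositions $\istype{}{\symB}$, $\isterm{}{\symb{s}}{\symB}$, $\isterm{}{\symb{t}}{\symB}$; the first is a premise, but the premises supply only $\isterm{}{\symb{s}}{\symA}$ and $\isterm{}{\symb{t}}{\symA}$. I would derive $\isterm{}{\symb{s}}{\symB}$ by a single application of the term conversion structural rule to the premises $\istype{}{\symA}$, $\istype{}{\symB}$, $\isterm{}{\symb{s}}{\symA}$, $\eqtype{}{\symA}{\symB}$, and likewise for $\isterm{}{\symb{t}}{\symB}$. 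This is legitimate because presuppositivity asks for derivability in $T$, whose closure system $\clos T$ always contains $\StructuralRules\Sigma$, and the conversion rules are among the structural rules. This lone use of conversion is the only genuine step; everything else reduces to reading presuppositions off the premises.
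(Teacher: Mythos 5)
Your proof is correct and follows essentially the same route as the paper's: tightness and the empty conclusion context are checked by inspection, presuppositivity of every premise and conclusion is read off directly from the premises, and the single nontrivial case — the presuppositions $\isterm{}{\symb{s}}{\symB}$ and $\isterm{}{\symb{t}}{\symB}$ of the conclusion of the term-equation conversion rule — is discharged by an application of the ordinary term conversion rule, exactly as in the paper. Your version merely spells out the details that the paper declares obvious.
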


\begin{proof}
  Tightness is obvious. Presuppositivity is obvious for all but the conclusion of the equality conversion rule $\eqterm{}{\symb{s}}{\symb{t}}{\symB}$, which immediately follows from the ordinary conversion rule for term judgements.
\end{proof}

\subsection{Acceptable type theories}
\label{sec:acceptable-type-theories}

It may happen that a raw type theory is flawed, even though each of its rules is acceptable. For instance, we might simply forget to state a rule governing one of the symbols, or provide two contradicting rules for the same symbol. Thus we also need a notion of acceptability of a raw type theory.

\begin{definition}%
  \label{def:symbol-rule}%
  Suppose $\Sigma$ is a signature and $S \in \Sigma$ has arity $\arity{S}$.
  The \defemph{generic application of~$S$} is the expression
  \begin{equation*}
     \genapp{S} \defeq
     S(\fammap
         {\synmeta{i}(\tuple{\synvar{j}}{j \in \argbinder {\alpha_S} i})}
         {i \in \args \arity{S}}
     ).
  \end{equation*}
  We say that an inference rule~$R$ is a \defemph{symbol rule for~$S$} when its arity is $\arity{S}$, the judgement form of the conclusion is the syntactic class of~$S$, and its head is $\genapp{S}$.
\end{definition}

\begin{definition}
  \label{def:theory-good-properties}%
  A raw type theory $T$ over~$\Sigma$ is:
  \begin{enumerate}
  \item \defemph{tight} if its rules are tight and there is a bijection $\beta$ from the index set of~$\Sigma$ to the object rules of~$T$ such that, for every symbol $S$ of~$\Sigma$, $\beta(S)$ is a symbol rule for~$S$;
  \item \defemph{presuppositive} if all of its rules are presuppositive over $T$;
  \item \defemph{substitutive} if all its rules have empty conclusion context; and
  \item \defemph{congruous} if for every object rule of~$T$ the associated congruence rule (cf.~\cref{def:congruence-rule}) is a rule of~$T$.
  \end{enumerate}
  A raw type theory is \defemph{acceptable} if it enjoys all of these properties.
\end{definition}


The definition omits a common criterion for being ``reasonable'', namely there being a well-founded order that prevents cyclic references between parts of the theory. We address well-foundedness separately in \cref{sec:well-founded-type-theories}, and provide a couple of examples showing how cyclic references may appear in an acceptable type theory.

\begin{example}
  \label{ex:cyclic-quantifier}%
  Let $\symb{Q}$ be a quantifier-like type symbol which takes a type and a term, and binds one variable in the term, with the raw rule
  \begin{equation*}
    \infer{
      \istype{}{\symA}
      \\
      \isterm
        {\synvar{0} \of \symb{Q}(\symA, \symb{t}(\synvar{0}))}
        {\symb{t}(\synvar{0})}
        {\symA}
    }{
      \istype{}{\symb{Q}(\symA, \symb{t}(\synvar{0}))}
    }
  \end{equation*}
  The context in the second premise is \emph{not} cyclic because $\symb{Q}$ binds $\synvar{0}$, but the premise itself is cyclic because the term metavariable $\symb{t}$ is introduced in a context that mentions it, and the rule is only presuppositive thanks to itself.
  Even so, the rule can still be used to derive judgements. For example, for any
  $\isterm{}{t}{A}$ we can form the type $\symb{Q}(A, t)$.
  It is not clear what one would do with such rules, but we have no reason to banish them outright.
\end{example}

\begin{example}
  \label{ex:type-in-type}%
  The second example of cyclic references is a Tarski-style universe that contains itself, formulated as follows.
  Let $\symb{u}$ be a term constant and $\symb{El}$ a type symbol taking one term argument, with the raw rules
  \begin{mathpar}
    \infer{
    }{
      \isterm{}{\symb{u}}{\symb{El}(\symb{u})}
    }

    \infer{
      \isterm{}{\symb{a}}{\symb{El}(\symb{u})}
    }{
      \istype{}{{\symb{El}(\symb{a})}}
    }
  \end{mathpar}
  Think of $\symb{u}$ as the code of the universe $\symb{El}(\symb{u})$ that contains itself, and $\symb{El}$ as the constructor taking codes to types. The rules themselves are not cyclic,
  and the type theory comprising them and the associated congruence rules is acceptable. However, in order to derive $\istype{}{\symb{El}(\symb{u})}$, which is a presupposition for both rules, we need both rules.
  In this case the cycles can be broken easily enough: introduce a type constant $\istype{}{\symb{U}}$ and the equation $\eqtype{}{\symb{U}}{\symb{El}(\symb{u})}$, then use $\symb{U}$ in place of $\symb{El}(\symb{u})$ in the above rules.
  In \cref{sec:well-founded-replacement} we shall provide a general method for removing cyclic dependencies between rules by introduction of new symbols.
\end{example}

\subsection{Derivability of presuppositions}

Our first meta-theorem is a fairly easy one, giving a property that is always desired but not often explicitly discussed.

\begin{theorem}[Presuppositions theorem]
  \label{thm:presuppositions}%
  Let $T$ be a raw type theory with all rules weakly presuppositive.
  If a judgement is derivable over $T$, then so are all its presuppositions.
\end{theorem}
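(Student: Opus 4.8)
The plan is to argue by structural induction on a derivation $D$ of a judgement $\Gamma \types J$ over the closure system $\clos T$, taken with \emph{empty} hypotheses; the empty hypothesis set is essential, since there is no reason the presuppositions of an arbitrary hypothesis should be derivable, whereas with no hypotheses every node of $D$ is an application of some closure rule. The inductive hypothesis then provides, for each premise of the final rule of $D$ — each being the conclusion of a strictly smaller subderivation — that all of \emph{its} presuppositions are again derivable over $T$. I would then case-split according to which family of $\clos T$ the final rule belongs to.

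The specific rules of $T$, together with the equivalence-relation and conversion rules (which are weakly presuppositive by \cref{prop:structural-rules-acceptable}), can all be treated uniformly, since each is an instantiation $\act{(I,\Gamma)}R$ of a weakly presuppositive raw rule $R$ with conclusion $C$. By weak presuppositivity, each element of $\Presup{C}$ has a derivation over $\mvextend{T}{\arity{R}}$ from the premises of $R$ and the presuppositions of those premises. Instantiating this derivation under $(I,\Gamma)$ by \cref{cor:instantiation-of-derivations} yields a derivation over $T$ whose hypotheses are the instantiated premises of $R$ and the instantiated presuppositions of those premises. The former are exactly the premises of $\act{(I,\Gamma)}R$, hence derivable as subderivations of $D$; and by naturality of presuppositions under instantiation (\cref{prop:presuppositions-action-instantiation}) the latter coincide with the presuppositions of the instantiated premises, which are derivable by the inductive hypothesis. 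Grafting these derivations in (\cref{lem:hypotheses-grafting}), and invoking \cref{prop:presuppositions-action-instantiation} once more to identify $\act{(I,\Gamma)}(\Presup{C})$ with the presuppositions of $\act{(I,\Gamma)}C$, delivers precisely the presuppositions of the conclusion $\Gamma \types J$.

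The structural rules given \emph{directly} as families of closure rules I would check by hand. For a variable rule, the unique presupposition of the conclusion $\isterm{\Gamma}{\synvar{i}}{\Gamma_i}$ is its premise $\istype{\Gamma}{\Gamma_i}$, hence derivable. For a substitution rule concluding $\Delta \types \tca{f}J$, \cref{prop:presuppositions-action-substitution} says each presupposition has the shape $\Delta \types \tca{f}J'$ with $\Gamma \types J'$ a presupposition of the premise $\Gamma \types J$; the latter is derivable by the inductive hypothesis, and re-applying the same substitution rule — reusing the substitution premises $\isterm{\Delta}{f(i)}{\tca{f}\Gamma_i}$ already present in $D$ — produces $\Delta \types \tca{f}J'$. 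The equality-substitution rules are analogous: the required presuppositions are object judgements on $\tca{f}A$, $\tca{g}A$, $\tca{f}t$, $\tca{g}t$, obtained by substituting the rule's object premise along $f$ or along $g$; the single mismatch — needing $\tca{g}t$ at type $\tca{f}A$ rather than $\tca{g}A$ — is repaired by a conversion along $\eqtype{\Delta}{\tca{g}A}{\tca{f}A}$, which is derivable from the type equality-substitution rule together with symmetry.

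The main obstacle is the uniform case for instantiated raw rules: one must line up the hypotheses of the instantiated weak-presuppositivity derivation with the data actually available in $D$. This is exactly where \cref{prop:presuppositions-action-instantiation} (naturality of $\Presup$) and \cref{cor:instantiation-of-derivations} (the action of instantiations on derivations) do the real work; once those bookkeeping facts are invoked, the grafting step is routine, and the directly-given structural rules require only elementary manipulation of the structural rules themselves.
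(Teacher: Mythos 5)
Your proposal is correct and follows essentially the same route as the paper's own proof: induction on the derivation, a uniform treatment of specific rules together with the equivalence and conversion rules via weak presuppositivity, \cref{prop:presuppositions-action-instantiation}, and \cref{cor:instantiation-of-derivations}, and direct checks for the variable, substitution, and equality-substitution rules using \cref{prop:presuppositions-action-substitution}. If anything, you are slightly more explicit than the paper about the grafting step and about the conversion needed to place $\tca{g}t$ at type $\tca{f}A$ in the equality-substitution case, both of which the paper leaves implicit.
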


\begin{proof}
  We proceed by induction on derivations $D$ over~$T$.

  If $D$ ends with a variable rule (\cref{def:variable-rule}), then the only presupposition appears directly as the premise of the rule, so we may re-use its subderivation.

  If $D$ ends with a substitution rule (\cref{def:substitution-rule}), then its conclusion must be $\Delta \types \tca{f} J$ for some substitution~$f : \Delta \to \Gamma$ and judgement~$\Gamma \types J$.
  By \cref{prop:presuppositions-action-substitution}, each presupposition of the conclusion is $\Delta \types \tca{f} J'$ for some presupposition $\Gamma \types J'$ of $\Gamma \types J$.
  But $\Gamma \types J$ is a premise of the last rule of $D$, so by induction we have a derivation $D'$ of $\Gamma \types J'$.
  So we  can apply the substitution rule with $\Gamma \types J'$ (derived by $D'$) and the same substitution $f$ (with its premises derived as in $D$) to get the desired derivation of $\Delta \types \tca{f} J'$.
  
  Similarly, if $D$ ends with an equality substitution rule (\cref{def:equality-substitution-rule}), substituting an pair $f,g : \Delta \to \Gamma$ into a judgement $\Gamma \types J$, each presupposition of the conclusion can be derived by either a substitution (along $f$ or $g$ individually) or an equality substitution (along $f,g$) into some presupposition of $\Gamma \types J$.

  The equivalence and conversion rules (\cref{def:equivalence-relation-rule,def:conversion-rule}) are presuppositive by \cref{prop:structural-rules-acceptable}, so we treat them together with the specific raw rules of~$T$.

  If $D$ ends with an instance $\act{(I,\Gamma)} R$ of a raw rule $R$ (either specific or structural), then its conclusion is of the form $\act{(I,\Gamma)} \Delta \types \act{I} J$, where $\Delta \types J$ is the conclusion of~$R$.
  Now \cref{prop:presuppositions-action-instantiation} tells us that each presupposition of the conclusion is an instantiation $\act{(I,\Gamma)} \Delta \types \act{I} J'$ of some presupposition $\Delta \types J'$ of $\Delta \types J$.
  Since $R$ is weakly presuppositive, $\Delta \types J'$ is derivable from the premises of~$R$ plus their presuppositions.
  So by \cref{cor:instantiation-of-derivations}, $\act{(I,\Gamma)} \Delta \types \act{I} J'$ is derivable from the premises of $\act{(I,\Gamma)} R$ plus their presuppositions, which in turn are derivable by induction.
\end{proof}

\subsection{Elimination of substitution}
\label{sec:elimination-substitution}

In this section we show that over an acceptable type theory, the substitution rules (\cref{def:substitution-rule,def:equality-substitution-rule}) can be eliminated: anything derivable with them is derivable without.
At least, this will hold over a strict scope system;
for a general scope system, it can be almost eliminated but not quite entirely.

\begin{definition}
  An instance of the substitution rule (\cref{def:substitution-rule}) is a \defemph{trivial renaming}, or just \defemph{trivial}, if its substitution $f : \Delta \to \Gamma$ corresponds to a renaming on underlying scopes of the form $\inlscope^{-1} : \position{\Gamma} = \sumscope{\position{\Delta}}{\emptyscope} \to \position{\Delta}$, acting trivially at all positions.
\end{definition}

Typically these arise with $\Gamma = \act{(I,\Delta)}\emptycxt$, an instantiation of the empty context;
a trivial renaming is therefore of the form
\[
  \infer{
      \act{(I,\Delta)}\emptycxt \typesjudgement J
    }{
      \Delta \typesjudgement \act{(\inlscope^{-1})} J.
    }
\]
In a \emph{strict} scope system, trivial renamings are identities and hence redundant.

To avoid ambiguity with variance, we will in this section distinguish more carefully than usual between a renaming function $r : \position{\Gamma} \to \position{\Delta}$ and its associated substitution $\bar{r} : \Delta \to \Gamma$.

\begin{definition}
  Call a derivation over a raw type theory~$T$ \defemph{substitution-free} if it uses only trivial instances of the substitution rule, and does not use the equality substitution rule.
  Equivalently, it uses just the variable rule, equality rules, conversion rules, trivial renamings, and the specific rules of~$T$.
\end{definition}

The core of this section, \cref{lem:admissibility-substitution}, will be that substitution is admissible for substitution-free derivations; this can be seen as defining an action of substitution on such derivations.
We first need an analogous action of renaming, paralleling how substitution on expressions needed renaming to be defined first.

\begin{lemma}[Admissibility of renaming]
  \label{lem:admissibility-renaming}%
  Let $T$ be a substitutive type theory, with signature $\Sigma$.
  Let $\Gamma$ and $\Gamma'$ be contexts over $\Sigma$, and $r : \position{\Gamma} \to \position{\Gamma'}$ a renaming acting trivially at all positions in the sense of~\cref{def:substitution-rule}, i.e.\ such that $\Gamma'_{r(i)} = \act{r}\Gamma_i$ for all $i \in \Gamma$.
  Then given a substitution-free derivation $D$ of $\Gamma \types J$ in $T$, there is a substitution-free derivation $\act{r}D$ of $\Gamma' \types \act{r} J$.
\end{lemma}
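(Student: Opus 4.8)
The plan is to construct $\rename{r}D$ by structural recursion on $D$, proving simultaneously that it is a substitution-free derivation of $\Gamma' \typesjudgement \rename{r}J$; the induction hypothesis is taken uniformly over all triples $(\Gamma,\Gamma',r)$ with $r$ trivial, so that it can be reapplied to extended renamings when we pass under binders. Since $D$ is substitution-free, there are three kinds of final step to treat: the variable rule, an instance of a raw rule, and a trivial renaming. If $D$ ends with the variable rule at position $i$, its conclusion $\isterm{\Gamma}{\synvar{i}}{\Gamma_i}$ renames to $\isterm{\Gamma'}{\synvar{r(i)}}{\rename{r}\Gamma_i}$, and the triviality hypothesis $\Gamma'_{r(i)} = \rename{r}\Gamma_i$ makes this exactly the variable rule for $\Gamma'$ at $r(i)$, whose one premise $\istype{\Gamma'}{\Gamma'_{r(i)}}$ comes from recursively renaming the subderivation of $\istype{\Gamma}{\Gamma_i}$. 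If $D$ ends with an instance $\act{(I,\Gamma)}R$ of a raw rule $R$ --- an equivalence rule, a conversion rule, or a specific rule of $T$ --- the idea is to reapply the \emph{same} rule $R$ with the renamed instantiation $\tca{\bar{r}}I \in \Inst{\Sigma}{\position{\Gamma'}}{\arity{R}}$ over the context $\Gamma'$.

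To see that this produces the correctly renamed judgements I would invoke the naturality of instantiation with respect to substitution, \cref{prop:instantiation-boilerplate}(4): for an expression $e$ over $\mvextend{\Sigma}{\arity{R}}$ in a premise scope $\delta$, $\act{(\tca{\bar{r}}I)}e = \tca{(\sumscope{\bar{r}}{\delta})}(\act{I}e)$. The key small observation is that the extended substitution $\sumscope{\bar{r}}{\delta}$ is exactly the substitution induced by the extended renaming $\sumscope{r}{\idmap[\delta]}$, so that $\act{(\tca{\bar{r}}I)}e = \rename{(\sumscope{r}{\idmap[\delta]})}(\act{I}e)$. Because $T$ is substitutive, the conclusion of $R$ has empty context, i.e.\ $\delta = \emptyscope$, and the conclusion of the new instance is precisely $\Gamma' \typesjudgement \rename{r}J$. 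For the $k$-th premise, whose scope is the binder $\delta$ of the corresponding argument, the same computation identifies the premise of $\act{(\tca{\bar{r}}I,\Gamma')}R$ with the renaming along $\sumscope{r}{\idmap[\delta]}$ of the premise of $\act{(I,\Gamma)}R$; a direct check from the context-extension formula together with $\Gamma'_{r(i)} = \rename{r}\Gamma_i$ shows that $\sumscope{r}{\idmap[\delta]}$ still acts trivially on these extended contexts, so the inductive hypothesis applies and yields the renamed premise derivations, which are then reassembled under $R$.

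The remaining case is a trivial renaming, which is vacuous over a strict scope system and only mildly delicate in general. Here $D$ ends by renaming along the $\emptyscope$-summand isomorphism $\mathrm{inl}^{-1} : \sumscope{\position{\Gamma}}{\emptyscope} \to \position{\Gamma}$; to commute it past $r$ I would transport the target context back along the corresponding isomorphism for $\Gamma'$, recursively rename the subderivation along the composite (again a trivial renaming), and finish with one more trivial renaming, checking that the two composite renaming functions coincide so that their actions agree. I expect the main obstacle to be bookkeeping rather than anything conceptual: pinning down the variances in $\sumscope{\bar{r}}{\delta} = \overline{\sumscope{r}{\idmap[\delta]}}$ and verifying that $\sumscope{r}{\idmap[\delta]}$ genuinely acts trivially on $\act{(\tca{\bar{r}}I,\Gamma')}\Delta_k$ relative to $\act{(I,\Gamma)}\Delta_k$ --- precisely the step that lets the induction fire under binders. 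Everything else follows from the naturality and functoriality identities already collected in \cref{prop:instantiation-boilerplate} and from the fact that substitution generalises renaming.
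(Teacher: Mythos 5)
Your proposal is correct and follows essentially the same route as the paper's proof: induction over the derivation with the hypothesis quantified over all type-respecting renamings, the same three cases, and reapplication of the same raw rule under the renamed instantiation, justified by the naturality of instantiation under substitution/renaming and the observation that the extended renaming still acts trivially on the extended premise contexts. The only (cosmetic) difference is that the paper uses the renaming action $\act{(r \circ \inlscope)}I$ where you use $\tca{\bar r}I$, and it is explicit that in a non-strict scope system the conclusion of the new rule instance matches $\Gamma' \types \act{r}J$ only up to one further trivial renaming, which must be appended — a point you gloss as ``precisely'' but correctly flag as strict-scope bookkeeping elsewhere.
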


\begin{proof}
  For this proof, we say a renaming \defemph{respects types} when it acts trivially at all positions; and say a derivation $D$ with conclusion $\Gamma \types J$ is \defemph{renameable} if we have an operation giving, for every $\Gamma'$ and renaming $r : \position{\Gamma} \to \position{\Gamma'}$ respecting types, a derivation $\act{r}D$ of $\Gamma' \types \act{r} J$.
  
  We show by induction that every derivation is renameable.
  Call the derivation under consideration $D$, and suppose given in each case suitable $\Gamma'$, $r$.
  
  If $D$ concludes with a variable rule, giving $\isterm{\Gamma}{\synvar{i}}{\Gamma_i}$, then by induction, we can rename the derivation of the premise ${\istype{\Gamma}{\Gamma_i}}$ to a derivation of $\istype{\Gamma'}{\Gamma'_{r(i)}}$,
  and then apply the variable rule to derive $\isterm{\Gamma'}{\synvar{r(i)}}{\Gamma'_{r(i)}}$, which is the desired judgement since~$r$ respects types.

  If $D$ concludes with a trivial renaming $s : \Gamma \to \Delta$,
  then by induction, the derivation of the premise is renameable; so renaming it along $r \circ s$, we are done.

  Otherwise, $D$ concludes with an instantiation $\act{(I,\Theta)} R$, where $I \in \Inst{\Sigma}{\Theta}{\arity{R}}$ is an instantiation, and~$R$ is either an equality rule, a conversion rule, or a specific rule of~$T$.
  %
  In each cases the conclusion of~$R$ is of the form $\typesjudgement J'$, with empty context; so $\Gamma$ is exactly $\act{(I,\Theta)}\emptycxt$, and $J$ is has the form $\act{I} J'$.
  %
  %
  So now to derive $\Gamma' \types \act{r}{(\act{I} J')}$, we will apply the same raw rule~$R$ with the instantiation $I' \defeq \act{(r \circ \inlscope)}{I}$.
  Computing with renamings and instantiations according to (\cref{prop:instantiation-boilerplate}) shows that the conclusion of $\act{(I',\Gamma')} R$ is not quite $\Gamma' \typesjudgement \act{r}J$, but is the same modulo a trivial renaming, according to the following commutative square:
  \[
    \xymatrix@C=3em{
      \Gamma' \ar[r]^{\overline{\inlscope} \circ {\bar{r}}} \ar[dr]^{\bar{r}} & \Theta \\
      \act{(I',\Gamma')} \emptycxt \ar[r]  \ar[u]^{\overline{\inlscope}} & \Gamma \mathrlap{{} = \act{(I,\Theta)} \emptycxt} \ar[u]_{\overline{\inlscope}}
    }
    \phantom{\act{(\Theta)} } 
  \]
  We can therefore conclude the derivation of $\act{r}D$ by the rule $\act{(I',\Gamma')} R$ followed by a trivial renaming.

  It remains to derive the premises of $\act{(I',\Gamma')} R$.
  Each such premise is linked to a corresponding premise of $\act{(I,\Theta)} R$ by a renaming repecting types --- specifically, a context extension of $r \circ \inlscope$.
  But by induction, we have renameable derivations of the premises of $\act{(I,\Theta)} R$; so we are done.
\end{proof}

It is worthwhile to record a special case of admissibility of renaming.

\begin{corollary}[Admissibility of weakening]
  \label{cor:admissibility-weakening}%
  If a substitutive raw type theory derives $\Gamma \types J$ substitution-free, then it also derives $\Delta \types \act{w} J$ substitution-free for any \defemph{weakening} $w : \Gamma \to \Delta$, i.e., an injective variable renaming such that $\Delta_{w(i)} = \act{w} \Gamma_i$ for all $i \in \position{\Gamma}$. \qed
\end{corollary}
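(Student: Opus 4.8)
The plan is to recognise the corollary as an immediate instance of the admissibility of renaming, \cref{lem:admissibility-renaming}. A weakening $w : \Gamma \to \Delta$ is by definition an injective variable renaming $w : \position{\Gamma} \to \position{\Delta}$ satisfying $\Delta_{w(i)} = \act{w}\Gamma_i$ for all $i \in \position{\Gamma}$. But this defining condition is word-for-word the requirement that $w$ \emph{act trivially at all positions}, i.e.\ \emph{respect types} in the sense used in the statement and proof of \cref{lem:admissibility-renaming} (there phrased as $\Gamma'_{r(i)} = \act{r}\Gamma_i$). Thus I would simply invoke that lemma with $\Gamma' \defeq \Delta$ and $r \defeq w$, obtaining from the given substitution-free derivation $D$ of $\Gamma \types J$ the substitution-free derivation $\act{w}D$ of $\Delta \types \act{w}J$, which is exactly what is required.

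There is essentially no obstacle to overcome: the only hypothesis of the corollary not appearing in \cref{lem:admissibility-renaming} is the injectivity of $w$, and this is an additional constraint rather than a weakening of the lemma's assumptions, so it plays no role in the argument. In particular, the substitution-free derivation produced is valid for \emph{any} renaming respecting types, injective or not, so the conclusion follows \emph{a fortiori} for injective ones. The entire content of the corollary is therefore carried by the preceding lemma.

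Accordingly, the proof amounts to a single specialisation, and the purpose of stating it separately is purely to isolate the weakening case, which is the form in which the result is most frequently applied in the sequel. I would keep the write-up to one line, citing \cref{lem:admissibility-renaming} and noting that the defining property of a weakening is precisely the trivial-action condition, with the injectivity hypothesis explicitly flagged as unused.
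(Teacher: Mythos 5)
Your proposal is correct and coincides with the paper's own (implicit) argument: the corollary is introduced there precisely as ``a special case of admissibility of renaming'' and carries only a \qed, the intended proof being exactly the specialisation of \cref{lem:admissibility-renaming} you describe. Your observation that injectivity is an unused extra hypothesis is also accurate.
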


We can now give the action of substitution on derivations.

\begin{lemma}[Admissibility of substitution]%
  \label{lem:admissibility-substitution}%
  Let $T$ be a substitutive raw type theory over signature~$\Sigma$.
  Let $f : \Delta \to \Gamma$ be a raw substitution over~$\Sigma$, and $K \subseteq \position{\Gamma}$ a complemented subset such that:
  \begin{enumerate}
  \item \label{item:subst-trivial-case} $f$ acts trivially at each $i \in K$ in the sense of~\cref{def:substitution-rule}, i.e., for some $j \in \Delta$, $f(i) = \synvar{j}$ and $\Delta_j = f^*\Gamma_i$
  \item \label{item:subst-nontrivial-case} for each $i \in \position{\Gamma} \setminus K$, $T$ derives $\isterm{\Delta}{f(i)}{f^*\Gamma_i}$ without substitutions.
  \end{enumerate}
  Given a substifution-free derivation $D$ of $\Gamma \types J$ over $T$, there is a substitution-free derivation $\tca{f}D$ of $\Delta \types \tca{f} J$.
\end{lemma}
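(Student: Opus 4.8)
The plan is to prove the statement by structural induction on the substitution-free derivation~$D$, mirroring the proof of \cref{lem:admissibility-renaming}. I would say that a substitution-free derivation~$D$ of $\Gamma \types J$ is \emph{substitutable} if for every~$\Delta$, every raw substitution $f : \Delta \to \Gamma$, and every complemented $K \subseteq \position{\Gamma}$ satisfying hypotheses \eqref{item:subst-trivial-case} and \eqref{item:subst-nontrivial-case}, there is a substitution-free derivation $\tca{f}D$ of $\Delta \types \tca{f}J$; then show every substitution-free derivation is substitutable. Because $D$ is substitution-free and $T$ is substitutive, only three kinds of final step can occur: a variable rule, a trivial renaming, and an instantiation $\act{(I,\Theta)}R$ of an equality rule, a conversion rule, or a specific rule of~$T$, each of which has empty conclusion context.

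For the variable case the conclusion is $\isterm{\Gamma}{\synvar{i}}{\Gamma_i}$ and we must produce $\isterm{\Delta}{f(i)}{\tca{f}\Gamma_i}$. Since $K$ is complemented, $i$ lies in $K$ or in $\position{\Gamma} \setminus K$. If $i \in \position{\Gamma} \setminus K$, hypothesis \eqref{item:subst-nontrivial-case} supplies the required substitution-free derivation directly. If $i \in K$, then $f$ acts trivially, so $f(i) = \synvar{j}$ with $\Delta_j = \tca{f}\Gamma_i$; applying the induction hypothesis to the subderivation of the premise $\istype{\Gamma}{\Gamma_i}$ gives $\istype{\Delta}{\tca{f}\Gamma_i} = \istype{\Delta}{\Delta_j}$, after which the variable rule yields $\isterm{\Delta}{\synvar{j}}{\Delta_j}$, as wanted. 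The trivial-renaming case reduces to the induction hypothesis on the premise after composing $f$ with the renaming: using that substitution commutes with renaming, $\tca{f}(\act{r}J'')$ coincides with $\tca{g}J''$ for the composite substitution $g$, which still satisfies the hypotheses for a suitably transported $K$, possibly followed by a trivial renaming to repair scopes in the non-strict case.

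The instantiation case is the crux. Here $\Gamma = \act{(I,\Theta)}\emptycxt$ and $J = \act{I}J'$ for the conclusion thesis $J'$ of~$R$, and I would re-apply the \emph{same} rule $R$ under the substituted instantiation $I' \defeq \tca{f}I \in \Inst{\Sigma}{\position{\Delta}}{\arity{R}}$. By the naturality of instantiation with respect to substitution (\cref{prop:instantiation-boilerplate}, item 4), the conclusion of $\act{(I',\Delta)}R$ is $\tca{f}J$ up to the canonical associativity renaming, hence matches the target modulo a trivial renaming. For each premise of~$R$, whose context has scope $\eta$, the corresponding premises of $\act{(I,\Theta)}R$ and $\act{(I',\Delta)}R$ are related by the extended substitution $\sumscope{f}{\eta}$, again by the same naturality equation; I would apply the induction hypothesis to the available subderivation $D_k$ of that premise, with substitution $\sumscope{f}{\eta}$ and the enlarged complemented subset $K' = \inlscope(K) \cup \inrscope(\position{\eta})$.

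The bookkeeping required to feed the induction hypothesis is the main obstacle. On $\inrscope(\position{\eta})$ the extended substitution acts trivially by construction of $\sumscope{f}{\eta}$; on $\inlscope(K)$ it acts trivially because $f$ does, and the matching of context types on both parts is once more exactly the naturality equation. For positions in $\inlscope(\position{\Gamma} \setminus K)$, hypothesis \eqref{item:subst-nontrivial-case} provides substitution-free derivations of the relevant typings over~$\Delta$, which I lift to the extended context $\ctxextend{\Delta}{\act{I'}(\cdot)}$ using admissibility of weakening (\cref{cor:admissibility-weakening}); this discharges hypothesis \eqref{item:subst-nontrivial-case} for $\sumscope{f}{\eta}$ and $K'$. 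Having thus obtained substitution-free derivations of all premises of $\act{(I',\Delta)}R$, I conclude by applying $R$ under $(I',\Delta)$ followed by a trivial renaming absorbing the associativity isomorphism. The delicate points throughout are keeping the variance of~$f$ straight, choosing $K'$ so that the trivial-action hypothesis propagates correctly under binders, and discharging the non-trivial positions by weakening rather than re-deriving them.
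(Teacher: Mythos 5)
Your proposal is correct and follows essentially the same route as the paper's proof: the same ``substitutable'' induction predicate, the same three-way case split, the substituted instantiation $\tca{f}I$ with the extended substitution and enlarged trivial set $K'$ on the premises, and admissibility of renaming/weakening to lift the hypothesis-\eqref{item:subst-nontrivial-case} derivations into the extended premise contexts. The only cosmetic difference is that the paper threads the trivial renaming $\overline{\inlscope}$ through explicitly (writing $I' = \tca{(\overline{\inlscope}\circ f)}I$ and $g = \sumscope{(\overline{\inlscope}\circ f)}{\position{\Psi}}$) where you absorb it into the final trivial-renaming step, which amounts to the same bookkeeping.
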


Before proceeding with the proof, we take a moment to comment on the condition the lemma assumes on~$f$. It matches the condition used in the premises of the substitution rule, skipping type-checking on a set of indices~$K$ on which~$f$ acts trivially, and requiring derivability of $\isterm{\Delta}{f(i)}{\tca{f} \Gamma_i}$ only for $i \in \position{\Gamma} \setminus K$.
An alternative, maybe more conventional condition would be to require $\isterm{\Delta}{f(i)}{\tca{f} \Gamma_i}$ for all $i \in \position{\Gamma}$.

There are a couple of reasons to weaken the condition as we do, thus strengthening the statement of the lemma. The superficial one is its applicability to raw substitutions that potentially contain ill-formed type expressions.
The more essential one is that strengthened formulation is needed to keep the proof structurally inductive, allowing us to descend under a premise with a non-empty context, without needing to check that in the process the domain of $\tca{f}$ is extended with well-formed types.
Even if they are in fact well formed, we cannot show this by appealing to an induction hypothesis, because the derivations involved are not structural subderivations of the one we are recursing over.
What happens instead is that verification of well-formedness of types in contexts is deferred until their variables are accessed, at which point the variable rule provides the desired structural subderivations.
This phenomenon seems to be a genuine consequence of spelling out the proof for a \emph{general} class of type theories.
For any specific type theory, only certain concrete type-schemes will occur in contexts of premises of rules; and
these specific type-schemes are always designed by their authors in such a way that they can be shown well-formed individually, so that the inductive arguments do not break.

\begin{proof}[Proof of \cref{lem:admissibility-substitution}]
  Within this proof, all derivations are assumed substitution-free, and a (substitution-free) derivation~$D$ of a judgement $\Gamma \types J$ is called \defemph{substitutable} when, for all~$\Delta$, $f$ and $K$ satisfying the condition of the lemma, we have a (substitution-free) derivation of $\Delta \types f^*J$.
  We prove by induction that every derivation $D$ is substitutable.
  Much of the proof parallels that of \cref{lem:admissibility-renaming}.

  Suppose $D$ concludes with a variable rule showing $\isterm{\Gamma}{\synvar{i}}{\Gamma_i}$, and $f : \Delta \to \Gamma$ is a suitable substitution, acting trivially on $K \subseteq \position{\Gamma}$.
  When $i \in K$, we work just as in \cref{lem:admissibility-renaming}:
  given a suitable substitution into the conclusion, we inductively substitute the premise derivation along the same substitution, and then conclude with the variable rule.
  Otherwise, for $i \in \position{\Gamma}\setminus K$, we use the derivation of $\isterm{\Delta}{f(i)}{\tca{f}\Gamma_i}$ given by assumption. 
  
  Next, if $D$ concludes with a trivial renaming $\overline{\inlscope^{-1}} : \Gamma \to \Gamma'$, to conclude $\Gamma \typesjudgement J$, then suppose $f : \Delta \to \Gamma$ is a substitution acting trivially on $K$ and with derivations of $\isterm{\Delta}{f(i)}{\tca{f}\Gamma_i}$ for $i \in \position{\Gamma} \setminus K$.
  Then the substitution $\overline{\inlscope^{-1}} \circ f : \Delta \to \Gamma'$ acts trivially on $\inlscope(K) \subseteq \position{\Gamma'}$, and the same derivations witness that $\isterm{\Delta}{f(\inlscope^{-1}i)}{\tca{f}\Gamma'_i}$ for $i \in \position{\Gamma'} \setminus \inlscope{(K)}$.
  So by induction, we can substitute the derivation of the premise along $\overline{\inlscope^{-1}} \circ f $ to derive $\Delta \types \tca{f}J$as required.
  
  Otherwise, $D$ must conclude with an instantiation $\act{(I,\Theta)} R$, for some instantiation $I \in \Inst{\Sigma}{\Theta}{\arity{R}}$ and~$R$ a flat rule (structural or specific) with empty-context conclusion $\typesjudgement J$.
  So, suppose given a suitable substitution $f : \Delta \to \act{(I,\Theta)}\emptycxt$, acting trivially on $K \subseteq \position{\act{(I,\Theta)}\emptycxt} = \sumscope{\Theta}{\emptyscope}$; we need to derive $\Delta \typesjudgement f^*\act{I}J$.

  Just as in \cref{lem:admissibility-renaming}, we substitute $I$ along $\overline{\inlscope} \circ f : \Delta \to \Theta$ to get another instantiation $I'$ of $\arity{R}$ over $\Delta$, such that the conclusion of $\act{(I',\Delta)}R$ is just a trivial renaming away from $\Delta \typesjudgement f^*\act{I}J$.
  So it remains just to derive the premises of $\act{(I',\Delta)}R$.
  
  Again as in \cref{lem:admissibility-renaming}, by induction we have substitutable derivations of all premises of $\act{(I,\Theta)}R$.
  So it suffices to give, for each premise $\act{(I',\Delta)} \Psi \typesjudgement \act{I'}J'$ of $\act{(I',\Delta)}R$, a substitution $g : \act{(I',\Delta)} \Psi  \to \act{(I,\Theta)} \Psi$ to the corresponding premise of $\act{(I,\Theta)}R$, with $g^* \act{I}J' = \act{I'}J'$, and with $g$ satisfying the conditions of the lemma.
  (Recall that $\act{(I,\Theta)} \Psi$ is the context extension of $\Theta$ by the instantiations of types from $\Psi$, with positions $\sumscope{\position{\Theta}}{\position{\Psi}}$, and $\act{(I',\Delta)}$ similarly.)
  We define:
  \[ g \defeq \sumscope{(\overline{\inlscope} \circ f)}{\position{\Psi}} : \act{(I',\Delta)} \to \act{(I,\Theta)} \Psi. \]
  
  Now $g^* \act{I}J' = \act{I'}J'$ follows directly from \cref{prop:instantiation-boilerplate} (which we will continue to use without further comment), the definitions of $I'$ and $g$, and the following commuting diagram.
  \[
    \xymatrix@C=3em{
      \act{(I',\Delta)} \Psi \ar[d]_{\overline{\inlscope}} \ar[r]^{g} & \act{(I,\Theta)} \Psi \ar[d]^{\overline{\inlscope}} \\   
      \Delta \ar[r]^{\overline{\inlscope} \circ f} \ar[dr]^{f} & \Theta \\
      \act{(I',\Delta)} \emptycxt \ar[r] \ar[u]^{\overline{\inlscope}} & \act{(I,\Theta)} \emptycxt \ar[u]_{\overline{\inlscope}} 
    }
  \]
 
  Next, $g$ clearly acts trivially on $\act{\inrscope}\position{\Psi} \subseteq \position{\act{(I,\Theta)} \Psi}$.
  It also acts trivially on the subset of $\act{\inlscope}{\position{\Theta}}$ corresponding to the given $K \subseteq \sumscope{\position{\Theta}}{\emptyscope}$ on which $f$ acts trivially.
  
  Taking the union of these as the trivial set for $g$, it remains to show that for $i$ in the subset of $\act{\inlscope}{\position{\Theta}}$ corresponding to $\sumscope{\position{\Theta}}{\emptyscope} \setminus K$, we have $\isterm{\act{(I',\Delta)}\Psi}{g(i)}{\tca{g}(\act{(I,\Theta)} \Psi)_i}$.
  But this judgement is just the renaming of $\isterm{\Delta}{f(j)}{\tca{f}(\act{(I,\Theta)}\emptyset)_i}$ along the evident map $\sumscope{\position{\Delta}}{\emptyscope} \to \sumscope{\position{\Delta}}{\position{\Psi}}$. 
  So using \cref{lem:admissibility-renaming} to rename the derivation of $\isterm{\Delta}{f(j)}{\tca{f}(\act{(I,\Theta)}\emptycxt)_i}$ supplied with $f$, we are done.
\end{proof}

Next, we show that substitution respects judgemental equality of raw substitutions.
For this, we introduce a handy notation: for raw substitutions $f, g : \Gamma' \to \Gamma$ and an object judgement $J$, with head expression $e$ and boundary $B$, we write $\tca{(f \equiv g)} J$ for the equality judgement asserting that $\tca{f} e$ and $\tca{g} e$ are equal over the boundary $\tca{f} B$. Thus $\Gamma' \types \tca{(f \equiv g)} (A \type)$ stands for $\eqtype{\Gamma'}{\tca{f} A}{\tca{g} A}$ and $\Gamma' \types \tca{(f \equiv g)} (e : A)$ stands for $\eqterm{\Gamma'}{\tca{f} e}{\tca{g} e}{\tca{f} A}$.

\begin{lemma}[Admissibility of equality substitution]
  \label{lem:admissibility-equality-substitution}
  Let $T$ be a substitutive and congruous raw type theory over~$\Sigma$.
  Let $f, g : \Gamma' \to \Gamma$ be raw substitutions over~$\Sigma$,
  and $K \subseteq \position{\Gamma}$ a complemented subset such that:
  \begin{enumerate}

  \item \label{item:adm-subst-eq-trivial-case}%
    for each $i \in K$ there exists some (necessarily unique) $j \in \position{\Gamma'}$ such that
    $f(i) = g(i) = \synvar{j}$, and $\Gamma'_j = \tca{f} \Gamma_i$ or $\Gamma'_j = \tca{g} \Gamma_i$,

  \item \label{item:adm-subst-eq-nontrivial-case}%
    for each $i \in \position{\Gamma} \setminus K$,
    $T$ derives $\isterm{\Gamma'}{f(i)}{ \tca{f} \Gamma_i}$ and $\isterm{\Gamma'}{g(i)}{\tca{g} \Gamma_i}$ and $\eqterm{\Gamma'}{f(i)}{g(i)}{\tca{f} \Gamma_i}$ without substitutions.
  \end{enumerate}
  If $T$ derives $\Gamma \types J$ without substitutions, then $T$ derives $\Gamma' \types \tca{f} J$, $\Gamma' \types \tca{g} J$, and (if $J$ is an object judgement) $\Gamma' \types \tca{(f \equiv g)} J$, still without substitutions.
\end{lemma}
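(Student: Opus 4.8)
The plan is to prove the statement by structural induction on the substitution-free derivation $D$ of $\Gamma \types J$, establishing all three conclusions \emph{simultaneously}, in the style of \cref{lem:admissibility-renaming,lem:admissibility-substitution}. Concretely, I would call a derivation $D$ of $\Gamma \types J$ \emph{equality-substitutable} when, for every $\Gamma'$, every pair $f,g : \Gamma' \to \Gamma$, and every complemented $K$ satisfying the two listed hypotheses, there are substitution-free derivations of $\Gamma' \types \tca{f}J$, of $\Gamma' \types \tca{g}J$, and, if $J$ is an object judgement, of $\Gamma' \types \tca{(f \equiv g)}J$; and I would show by induction that every derivation is equality-substitutable. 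The first two conclusions run closely parallel to \cref{lem:admissibility-substitution}, but they cannot simply be quoted from it, because the first hypothesis only guarantees that $\Gamma'_j$ equals \emph{one} of $\tca{f}\Gamma_i$, $\tca{g}\Gamma_i$ for $i \in K$, which is weaker than the trivial-action hypothesis there. This weaker ``or'' is exactly what forces a single simultaneous induction: the missing type conversions are supplied by the equality part of the induction hypothesis.

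In the base case where $D$ is a variable rule concluding $\isterm{\Gamma}{\synvar{i}}{\Gamma_i}$ from a subderivation of $\istype{\Gamma}{\Gamma_i}$, I would argue as follows. If $i \notin K$, all three conclusions are given outright by the second hypothesis. If $i \in K$, then $f(i) = g(i) = \synvar{j}$, and the variable rule over $\Gamma'$ yields $\isterm{\Gamma'}{\synvar{j}}{\Gamma'_j}$; since $\Gamma'_j$ equals $\tca{f}\Gamma_i$ or $\tca{g}\Gamma_i$, a single conversion along $\eqtype{\Gamma'}{\tca{f}\Gamma_i}{\tca{g}\Gamma_i}$ — obtained by applying the equality part of the induction hypothesis to the subderivation of $\istype{\Gamma}{\Gamma_i}$ — delivers both $\isterm{\Gamma'}{\synvar{j}}{\tca{f}\Gamma_i}$ and $\isterm{\Gamma'}{\synvar{j}}{\tca{g}\Gamma_i}$, with the term equality following by reflexivity. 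This is precisely where the ``or'' hypothesis is discharged. The trivial-renaming case is handled exactly as in \cref{lem:admissibility-substitution}, by composing $f$ and $g$ with the renaming and checking the hypotheses are preserved.

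For the inductive step where $D$ concludes with an instantiation $\act{(I,\Theta)}R$ of a rule $R$ (equivalence, conversion, or a specific rule of $T$) whose conclusion has empty context, I would first form $I'_f \defeq \tca{(\overline{\inlscope} \circ f)}I$ and $I'_g \defeq \tca{(\overline{\inlscope} \circ g)}I$ over $\Gamma'$, as in \cref{lem:admissibility-substitution}. Re-applying $R$ with $I'_f$ and deriving its premises through the induction hypothesis — using the substitutions $\sumscope{(\overline{\inlscope} \circ f)}{\position{\Psi}}$ into the premise contexts, with the needed term judgements renamed via \cref{lem:admissibility-renaming} from those supplied by the second hypothesis — yields $\Gamma' \types \tca{f}J$ up to a trivial renaming, and symmetrically for $\tca{g}J$. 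When $R$ is an equivalence or conversion rule, $J$ is an equality judgement, so only these two parts are required and no congruence rule is needed.

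The genuinely new ingredient is the equality conclusion when $R$ is an object rule, and this is where congruence of $T$ enters. I would invoke the associated congruence rule $\congrule{R}$, which \cref{def:theory-good-properties} guarantees is a rule of $T$, instantiated by the combined instantiation of $\arity{\congrule R} = \arity R + \arity R$ whose left and right halves are $I'_f$ and $I'_g$. Its left premises are discharged by the $\tca{f}$-part, its right premises by the $\tca{g}$-part, and its equality premises — one for each object premise of $R$ — by the equality part of the induction hypothesis applied to the corresponding premise subderivations of $\act{(I,\Theta)}R$. I expect the main obstacle to be bookkeeping rather than anything conceptual: one must check, using the naturality and associativity identities of \cref{prop:instantiation-boilerplate}, that the instantiated conclusion $\tca{(\ell,r)}C$ of $\congrule{R}$ coincides — modulo the associativity and trivial renamings already used for the $\tca{f}$ and $\tca{g}$ parts — with the target $\tca{(f \equiv g)}(\act{I}J)$. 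Matching the boundary of this equality, in particular that its type component is $\tca{f}$ of the conclusion's type, against what $\tca{(\ell,r)}$ produces is the one place demanding care; once verified, a final trivial renaming completes the derivation.
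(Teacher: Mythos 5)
Your proposal follows essentially the same route as the paper's proof: a simultaneous induction establishing all three conclusions at once, with the variable case resolved by a conversion along $\eqtype{\Gamma'}{\tca{f}\Gamma_i}{\tca{g}\Gamma_i}$ obtained from the induction hypothesis, the rule-instantiation case handled by re-instantiating with $\tca{f}I$ and $\tca{g}I$, and the equality conclusion supplied by the associated congruence rule instantiated by $\tca{f}I + \tca{g}I$. The only detail you gloss over that the paper spells out is the verification that both extended substitution pairs (into the contexts extended by $\act{(\tca{f}I)}\Delta$ and by $\act{(\tca{g}I)}\Delta$ respectively) satisfy the lemma's hypotheses, which is the second place the ``or'' in hypothesis~\eqref{item:adm-subst-eq-trivial-case} is exploited; this is routine and consistent with your outline.
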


The assumption on~$f$ and~$g$ is perhaps a little surprising, especially the last ``or'' in case~\eqref{item:adm-subst-eq-trivial-case}. Another peculiarity is the fact that we include $\Gamma' \types \tca{f} J$ and $\Gamma' \types \tca{g} J$ in the conclusion rather than obtaining them by elimination of substitution.
The point is that the induction arguments need to work when we pass into extended contexts of premises, whereby types of the form $\tca{f} A$ \emph{or} $\tca{g} A$ are introduced; so we cannot assume either~$f$ or~$g$ satisfying the conditions of \cref{thm:elimination-substitution} individually, but need to give a condition on them together that is preserved.
And since this condition is too weak for applying elimination of substitution to~$f$ or~$g$, we carry the conclusion of that along as well.

\begin{proof}[Proof of \cref{lem:admissibility-equality-substitution}]
  The proof proceeds by induction on the derivation $D$ of $\Gamma \types J$.
  The details are closely analogous to elimination of substitution, so we spell out fewer.

  Consider the case when $D$ ends with a variable rule
  \begin{equation*}
    \infer{
      \istype{\Gamma}{\Gamma_i}
    }{
      \isterm{\Gamma}{\synvar{i}}{\Gamma_i}
    }
  \end{equation*}
  If case~\eqref{item:adm-subst-eq-nontrivial-case} applies for~$i$, we are done immediately.
  If case~\eqref{item:adm-subst-eq-trivial-case} applies, we obtain derivations of $\istype{\Gamma'}{\act{f} \Gamma_i}$, $\istype{\Gamma'}{\act{g} \Gamma_i}$, and $\eqtype{\Gamma'}{\act{f} \Gamma_i}{\act{g} \Gamma_i}$ by induction hypothesis.
  If $\Gamma'_j = \act{f} \Gamma_i$ then $\isterm{\Gamma'}{\synvar{j}}{\act{f} \Gamma_i}$ follows by the variable rule and $\isterm{\Gamma'}{\synvar{j}}{\act{g} \Gamma_i}$ from it by conversion. And of course, $\eqterm{\Gamma'}{\synvar{j}}{\synvar{j}}{\Gamma'_j}$ is derivable by reflexivity. If $\Gamma'_j = \act{g} \Gamma_i$, the situation is symmetric.

  Otherwise, $D$ concludes with an instantiation $\act{I} R$ where $I \in \Inst{\Sigma}{\Gamma}{\arity{R}}$ and~$R$ is either an equality rule, a conversion rule, or a specific rule of~$T$. The conclusion of $\act{I} R$ has the form $\Gamma \types \act{I} J'$.
  We need to derive $\Gamma' \types \tca{f} (\act{I} J')$, $\Gamma' \types \tca{g} (\act{I} J')$, and if $R$ is an object rule then also $\Gamma' \types \tca{(f \equiv g)} (\act{I} J')$.

  Let us first verify that the raw substitutions $f' \defeq \sumscope f {\position{\Delta}} : \ctxextend {\Gamma'}{\act{(\tca{f} I)} \Delta} \to \ctxextend \Gamma {\act{I} \Delta}$ and $g' \defeq \sumscope g {\position{\Delta}} : \ctxextend {\Gamma'}{\act{(\tca{f} I)} \Delta} \to \ctxextend \Gamma {\act{I} \Delta}$ satisfy the conditions~\eqref{item:adm-subst-eq-trivial-case} and~\eqref{item:adm-subst-eq-nontrivial-case} of the lemma for the complemented subset $K' \subseteq \position{\ctxextend{\Gamma}{\act{I} \Delta}} = \position{\Gamma} + \position{\Delta}$, given by
  $
    K' = \set{\inl(i) \such i \in K} \cup \set{\inr(k) \such k \in \position{\Delta}}
  $:
  \begin{enumerate}

  \item
    For $\inl(i) \in K'$, there is $j \in \Gamma'$ such that $f(i) = g(i) = \synvar{j}$ and either $\Gamma'_j = \tca{f} \Gamma_i$ or $\Gamma'_j = \tca{g} \Gamma_i$.
    Now $f'$ and $g'$ satisfy condition~\eqref{item:subst-trivial-case} because $f'(\inl(i)) = \synvar{\inl(j)} = g'(\inl(i))$ and
    $(\ctxextend{\Gamma'}{\act{(\tca{f} I) \Delta}})_{\inl(j)} = \act{{\inl}} \Gamma'_j$, which is equal either to $\act{{\inl}} (\tca{f} \Gamma_i) = \tca{f'} (\ctxextend \Gamma {\act{I} \Delta})$ or to $\act{{\inl}} (\tca{g} \Gamma_i) = \tca{g'} (\ctxextend \Gamma {\act{I} \Delta})$, as the case may be.

  \item
    For $\inr(k) \in K'$, condition~\eqref{item:subst-trivial-case} is satisfied by $f'$ and $g'$: it is clear that
    $
    f'(\inr(k)) = \synvar{\inr(k)} = g'(\inr(k))
    $,
    while
    $
    \tca{f'} (\ctxextend \Gamma {\act{I} \Delta})_{\inr(k)}
    = \tca{f'} (\act{I} \Delta_k)
    = \act{(\tca{f} I)} \Delta_k
    = (\ctxextend {\Gamma'}{\act{(\tca{f} I)} \Delta})_{\inr(k)}
    $
    holds, where we used \cref{prop:instantiation-boilerplate} in the second step.

  \item
    For $\inl(j) \in (\position{\Gamma} + \position{\Delta}) \setminus K'$, $f'$ and $g'$ satisfy \eqref{item:subst-nontrivial-case} because the desired judgements
    \begin{align*}
      \isterm
        {\ctxextend {\Gamma'}{\act{(\tca{f} I)} \Delta} &}
        {f'(\inl(j))}
        {\tca{f'} (\ctxextend \Gamma {\act{I} \Delta})_{\inl(j)}}
      \\
      \isterm
        {\ctxextend {\Gamma'}{\act{(\tca{f} I)} \Delta} &}
        {g'(\inl(j))}
        {\tca{g'} (\ctxextend \Gamma {\act{I} \Delta})_{\inl(j)}}
      \\
      \eqterm
        {\ctxextend {\Gamma'}{\act{(\tca{f} I)} \Delta} &}
        {f'(\inl(j))}
        {g'(\inl(j))}
        {\tca{f'} (\ctxextend \Gamma {\act{I} \Delta})_{\inl(j)}}
      \\
      \intertext{are respectively equal to}
      \isterm
        {\ctxextend {\Gamma'}{\act{(\tca{f} I)} \Delta} &}
        {\act{{\inl}} (f(j))}
        {\act{{\inl}} (\tca{f} \Gamma_j)}
      \\
      \isterm
        {\ctxextend {\Gamma'}{\act{(\tca{f} I)} \Delta} &}
        {\act{{\inl}} (g(j))}
        {\act{{\inl}} (\tca{g} \Gamma_j)}
      \\
      \eqterm
        {\ctxextend {\Gamma'}{\act{(\tca{f} I)} \Delta} &}
        {\act{{\inl}} (f(j))}
        {\act{{\inl}} (g(j))}
        {\act{{\inl}} (\tca{f} \Gamma_j)}
    \end{align*}
    and these are derivable by \cref{lem:admissibility-renaming} applied to the renaming $\inl$ and the assumptions~\eqref{item:subst-nontrivial-case} for~$f$ and~$g$.
  \end{enumerate}

  We similarly check that the raw substitutions $f'' \defeq \sumscope f {\position{\Delta}} : \ctxextend {\Gamma'} {\act{(\tca{g} I)} \Delta} \to \ctxextend \Gamma {\act{I} \Delta}$ and $g'' \defeq \sumscope g {\position{\Delta}} : \ctxextend {\Gamma'} {\act{(\tca{g} I)} \Delta} \to \ctxextend \Gamma {\act{I} \Delta}$ satisfy the conditions of the lemma with the same set $K'$, too. The verification is similar to the case of $f'$ and $g'$ above, and at this point the disjunction in~\eqref{item:subst-trivial-case} lets us exchange the role of~$g$ and~$f$.

  We may now derive $\Gamma' \types \tca{f} (\act{I} J')$ by the closure rule $\act{(\tca{f}I)} R$, as it has the correct conclusion by \cref{prop:instantiation-boilerplate}. To see that its premises are derivable, we verify that for any premise $\Delta \types J''$ of~$R$, the instantiation by $\tca{f} I$, namely
  \begin{equation}
    \label{eq:adm-subst-eq-1}
    \ctxextend {\Gamma'}{\act{(\tca{f} I)} \Delta} \types \act{(\tca{f} I)} J''
  \end{equation}
  is derivable. The corresponding premise of $\act{I} R$, which is
  \begin{equation}
    \label{eq:adm-subst-eq-2}
    \ctxextend{\Gamma} {\act{I} \Delta} \types \act{I} J'',
  \end{equation}
  is derivable by assumption, and so we obtain~\eqref{eq:adm-subst-eq-1} by the induction hypothesis for~\eqref{eq:adm-subst-eq-2} applied to $f'$ and $g'$.

  By a similar argument $\Gamma' \types \tca{g} (\act{I} J')$ is derivable by the closure rule $\act{(\tca{g} I)} R$, we only need to use $f''$ and $g''$ instead of $f'$ and $g'$ to derive the premise
  \begin{equation}
    \label{eq:adm-subst-eq-3}
    \ctxextend {\Gamma'} {\act{(\tca{g} I)} \Delta} \types \act{(\tca{g} I)} J''.
  \end{equation}

  It remains to be checked that $\Gamma' \types \tca{(f \equiv g)} (\act{I} J')$ is derivable when~$R$ is an object rule. Because $T$ is congruous, the congruence rule~$C$ associated with~$R$ is a specific rule of~$T$. Let $\ell, r : \mvextend{\Sigma}{\arity{R}} \to \mvextend{\Sigma}{\arity{C}}$ be the signature maps from \cref{def:congruence-rule}
  and $I' \defeq \tca{f} I + \tca{g} I \in \Inst{\Sigma}{\Gamma'}{\arity{R} + \arity{R}}$. Note that $\act{I'} \circ \act{\ell} = \tca{f} I$ and $\act{I'} \circ \act{r} = \tca{g} I$.
  The instantiation $\act{I'} C$ is a closure rule whose conclusion is precisely $\Gamma' \types \tca{(f \equiv g)} (\act{I} J')$, so we only have to establish that its premises are derivable, of which there are three kinds:
  \begin{enumerate}
  \item For each premise $\Delta \types J''$ of $R$, there is a corresponding premise $\act{I'}(\act{\ell}(\Delta \types J''))$, which is equal to~\eqref{eq:adm-subst-eq-1}.
    We have already seen that it is derivable.
  \item For each premise $\Delta \types J''$ of $R$, there is a corresponding premise $\act{I'}(\act{r}(\Delta \types J''))$, which is equal to~\eqref{eq:adm-subst-eq-3}.
    Its derivability has been established, too.
  \item For each object premise $\Delta \types J''$ of $R$, there is a corresponding premise, namely the associated equality judgement (\cref{def:judgement-associated-congruence}) instantiated by~$I'$. A short calculation relying on \cref{prop:instantiation-boilerplate} shows that the judgement is
    \begin{equation*}
      \ctxextend {\Gamma'}{\act{(\tca{f} I)} \Delta} \types \tca{(f' \equiv g')} J'',
    \end{equation*}
    which is one of the consequences of the induction hypothesis for~\eqref{eq:adm-subst-eq-2} applied to $f'$ and $g'$. \qedhere
  \end{enumerate}
\end{proof}

We can now put these together into the main theorem of this section.

\begin{theorem}[Elimination of substitution]
  \label{thm:elimination-substitution}%
  Let $T$ be a substitutive and congruous raw type theory; then every derivable judgement over $T$ has a substitution-free derivation.
\end{theorem}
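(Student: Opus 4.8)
The plan is to proceed by structural induction on a derivation $D$ of a judgement $\Gamma \types J$ over the closure system $\clos T$, and to produce from it a substitution-free derivation of the very same judgement. The induction splits according to the closure rule at the root of $D$. The two substitution families (\cref{def:substitution-rule,def:equality-substitution-rule}) are exactly the cases that must be removed, and they are discharged by the heavy lifting already carried out in \cref{lem:admissibility-substitution,lem:admissibility-equality-substitution}; every other case is routine.

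First I would dispose of the cases where $D$ ends with a rule that is itself permitted in substitution-free derivations: a variable rule (\cref{def:variable-rule}), an instance of an equivalence relation rule (\cref{def:equivalence-relation-rule}) or a conversion rule (\cref{def:conversion-rule}), or an instance of a specific rule of~$T$. In each of these the induction hypothesis supplies substitution-free derivations of the premises, and we simply reapply the same closure rule to obtain a substitution-free derivation of the conclusion.

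The interesting cases are when $D$ ends with a substitution rule or with an equality-substitution rule. Suppose $D$ ends with an instance of the substitution rule, so its conclusion is $\Delta \types \tca{f} J$ for some $f : \Delta \to \Gamma$ and complemented $K \subseteq \position{\Gamma}$ on which $f$ acts trivially, its premises being $\Gamma \types J$ together with $\isterm{\Delta}{f(i)}{\tca{f} \Gamma_i}$ for each $i \in \position{\Gamma} \setminus K$. By the induction hypothesis all these premises have substitution-free derivations, and these are precisely the data required by \cref{lem:admissibility-substitution} (using that $T$ is substitutive), whose output is a substitution-free derivation of $\Delta \types \tca{f} J$. The equality-substitution case is analogous: the premises of the rule (\cref{def:equality-substitution-rule}) give, by induction, substitution-free derivations of $\Gamma \types J$ and, for each $i \in \position{\Gamma} \setminus K$, of the three judgements $\isterm{\Delta}{f(i)}{\tca{f}\Gamma_i}$, $\isterm{\Delta}{g(i)}{\tca{g}\Gamma_i}$, and $\eqterm{\Delta}{f(i)}{g(i)}{\tca{f}\Gamma_i}$. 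Since the rule's joint-trivial action on $K$ implies the weaker disjunctive condition in the hypothesis of \cref{lem:admissibility-equality-substitution}, that lemma applies (here we need $T$ congruous as well as substitutive), yielding a substitution-free derivation of $\Delta \types \tca{(f \equiv g)} J$, which is exactly the conclusion of the rule.

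I do not expect any genuine obstacle at this level: the real difficulties --- descending under binders while maintaining the trivial-action bookkeeping, and threading the congruence rules through the equality case --- have already been confronted in \cref{lem:admissibility-substitution,lem:admissibility-equality-substitution}, and the present theorem is merely their clean packaging. The only points needing care are to note that the hypothesis that $T$ is substitutive (\cref{def:theory-good-properties}) is what lets those lemmas treat every specific-rule conclusion as having empty context, and that the induction hypothesis must be applied to \emph{all} premises of the two substitution families, not merely the principal one.
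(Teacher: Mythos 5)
Your proposal is correct and follows essentially the same route as the paper: induction on the derivation, re-applying non-substitution rules directly and discharging the substitution and equality-substitution cases via \cref{lem:admissibility-substitution} and \cref{lem:admissibility-equality-substitution} respectively. The paper states this in two lines; your elaboration, including the observation that joint-trivial action implies the disjunctive hypothesis of \cref{lem:admissibility-equality-substitution}, is an accurate unpacking of the same argument.
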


\begin{proof}
  Work by induction over the original derivation.
  At substitution rules, apply \cref{lem:admissibility-substitution}; and at equality substitution rules, \cref{lem:admissibility-equality-substitution}.
\end{proof}



\subsection{Uniqueness of typing}

Whether it is desirable for a term to have many types depends on one's motivations, but certainly in our setting, where the terms record detailed information about premises, we should expect a term to have at most one type, which we prove here.

\begin{theorem}
  \label{thm:tight-uniqueness-of-typing}
  If a tight, substitutive raw type theory $T$ derives $\istype \Gamma A$, $\istype \Gamma B$, $\isterm \Gamma t A$ and $\isterm \Gamma t B$ then it also derives $\eqtype \Gamma A B$.
\end{theorem}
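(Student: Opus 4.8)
The plan is to prove that the type of a derivable term judgement is pinned down, up to judgemental equality, by the \emph{head} of the term, and then to read off $\eqtype{\Gamma}{A}{B}$ by comparing both $A$ and $B$ against this canonical type. For a term $t$ in a context $\Gamma$, I would define $\natty{\Gamma}{t}$ by cases on the head: if $t = \synvar{i}$, put $\natty{\Gamma}{t} \defeq \Gamma_i$; if $t = S(e)$, then $S$ is a term symbol and, because $T$ is tight (\cref{def:theory-good-properties}), there is a \emph{unique} symbol rule $R_S$ for $S$ (\cref{def:symbol-rule}), say with conclusion $\isterm{}{\genapp{S}}{C}$; put $\natty{\Gamma}{t} \defeq \act{I}C$, where $I$ is the instantiation determined by $I_k \defeq e_k$. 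The key computation, using the shape of the generic application $\genapp{S}$, is that $\act{I}\genapp{S} = S(e)$ exactly when $I_k = e_k$, so $t$ alone determines $I$ and hence $\natty{\Gamma}{t}$.

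First I would prove the central lemma: whenever $T$ derives $\isterm{\Gamma}{t}{A}$, it derives $\eqtype{\Gamma}{A}{\natty{\Gamma}{t}}$. This goes by induction on the derivation $D$, with a case for each rule that can conclude a term judgement. If $D$ ends with the variable rule (\cref{def:variable-rule}) then $A = \Gamma_i = \natty{\Gamma}{t}$, and I conclude by reflexivity, the needed $\istype{\Gamma}{\Gamma_i}$ being exactly the premise of that rule. If $D$ ends with an instance of a symbol rule, substitutivity forces its conclusion context to be empty, so the instance is $\isterm{\Gamma}{S(e)}{\act{I}C}$ with $A = \act{I}C = \natty{\Gamma}{t}$ syntactically, and again reflexivity applies. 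If $D$ ends with the term conversion rule (\cref{def:conversion-rule}), obtaining $A$ from $A'$ via $\eqtype{\Gamma}{A'}{A}$, the induction hypothesis gives $\eqtype{\Gamma}{A'}{\natty{\Gamma}{t}}$, and the symmetry and transitivity rules (\cref{def:equivalence-relation-rule}) yield $\eqtype{\Gamma}{A}{\natty{\Gamma}{t}}$.

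The crux is the substitution rule (\cref{def:substitution-rule}), which does not preserve the head of the term: here $t = \tca{f} t_0$ arises from a premise $\isterm{\Gamma_0}{t_0}{A_0}$ with $A = \tca{f} A_0$. The induction hypothesis supplies $\eqtype{\Gamma_0}{A_0}{\natty{\Gamma_0}{t_0}}$, which I transport along $f$ by applying the plain substitution rule to this equality judgement, obtaining $\eqtype{\Gamma}{A}{\tca{f}\natty{\Gamma_0}{t_0}}$. If $t_0$ is compound then $\tca{f}\natty{\Gamma_0}{t_0} = \natty{\Gamma}{t}$ by the naturality identities of \cref{prop:instantiation-boilerplate}; if $t_0 = \synvar{j}$, then $t = f(j)$ and its head is that of $f(j)$, so instead I feed the substitution rule's own well-typedness premise $\isterm{\Gamma}{f(j)}{\tca{f}\Gamma_{0,j}}$ --- a proper subderivation of $D$ --- back into the induction hypothesis to get $\eqtype{\Gamma}{\tca{f}\Gamma_{0,j}}{\natty{\Gamma}{t}}$, and close the gap by transitivity. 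With the lemma in hand, the theorem follows: applying it to the two given term judgements produces $\eqtype{\Gamma}{A}{\natty{\Gamma}{t}}$ and $\eqtype{\Gamma}{B}{\natty{\Gamma}{t}}$, and symmetry and transitivity give $\eqtype{\Gamma}{A}{B}$, the side type-formation premises being precisely the hypotheses $\istype{\Gamma}{A}$ and $\istype{\Gamma}{B}$.

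The main obstacle is exactly this substitution case, both because substitution rewrites the head and because the equivalence rules are stated in tight form and so demand auxiliary type-formation judgements (for instance $\istype{\Gamma}{\natty{\Gamma}{t}}$ in the symbol case, and $\istype{\Gamma}{\tca{f}\Gamma_{0,j}}$ in the substitution case) that are not themselves premises of the rules being inverted. I would supply these as presuppositions, via derivability of presuppositions (\cref{thm:presuppositions}); equivalently, one may sidestep the substitution case altogether by first passing to substitution-free derivations through elimination of substitution (\cref{thm:elimination-substitution}), after which only the variable, conversion, and symbol cases remain, modulo the trivial renamings that disappear for a strict scope system.
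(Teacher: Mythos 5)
Your proposal is correct in substance but takes a genuinely different route from the paper's. The paper proves the theorem by a \emph{double} induction on the two derivations of $\isterm{\Gamma}{t}{A}$ and $\isterm{\Gamma}{t}{B}$ (after first invoking \cref{thm:elimination-substitution} to discard substitution rules), comparing their final steps case by case: conversion against anything, variable against variable, and symbol rule against symbol rule, with tightness forcing the two instantiations --- and hence the two types --- to coincide syntactically. You instead factor the statement through a single-induction lemma, that every derivable type of $t$ is judgementally equal to the natural type $\natty{\Gamma}{t}$, and then compare $A$ and $B$ against that canonical type. This is essentially the paper's own later inversion principle (\cref{thm:inversion-principle}) and its corollary that a typeable term has its natural type, pulled forward and used to derive uniqueness; it buys a reusable lemma and avoids the double induction, at the cost of defining $\natty{\Gamma}{-}$ and transporting it through the cases. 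Your direct handling of the substitution rule is also worth noting: the paper's appeal to \cref{thm:elimination-substitution} formally requires congruousness, which is not among the hypotheses of the present theorem, whereas your in-line treatment needs only tightness and substitutivity.

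Two loose ends in the execution. First, the tight equivalence rules demand an $\istype$ premise for every type they mention, and you propose to discharge $\istype{\Gamma}{\natty{\Gamma}{t}}$ via \cref{thm:presuppositions}; but that theorem assumes all rules are weakly presuppositive, which is not a hypothesis here (a tight, substitutive theory may contain a rule like \eqref{eq:example-app-3} whose conclusion type is not derivable at all). The repair is to strengthen the lemma so that it returns either a syntactic identity $A = \natty{\Gamma}{t}$ or a judgemental equality together with a derivation of $\istype{\Gamma}{\natty{\Gamma}{t}}$, harvesting the needed $\istype$ witnesses from the premises of the variable and conversion rules and, in the final application, from the given hypotheses $\istype{\Gamma}{A}$ and $\istype{\Gamma}{B}$. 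Second, in the substitution case with $t_0 = \synvar{j}$, the well-typedness premise $\isterm{\Gamma}{f(j)}{\tca{f}\Gamma_{0,j}}$ that you feed back into the induction hypothesis exists only for $j \notin K$; for $j \in K$ the substitution acts trivially, so $f(j)$ is itself a variable with $\natty{\Gamma}{f(j)} = \tca{f}\Gamma_{0,j}$ holding syntactically, which closes that subcase without the premise. Neither point breaks the argument, but both need to be said.
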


\begin{proof}
  By \cref{thm:elimination-substitution} it suffices to prove the claim for substitution-free derivations.
  %
  Suppose we have derivations $D_A$, $D_B$, $D_1$ and $D_2$:
  \begin{mathpar}
    \infer
    {D_A}
    {\istype{\Gamma}{A}}

    \infer
    {D_B}
    {\istype{\Gamma}{B}}

    \infer
    {D_1}
    {\isterm{\Gamma}{t}{A}}

    \infer
    {D_2}
    {\isterm{\Gamma}{t}{B}}
  \end{mathpar}
  The proof proceeds by a double induction on the derivations $D_1$
  and $D_2$.

  Consider the case where $D_1$ ends with a conversion:
  \begin{equation*}
  \infer
    {    \infer {D_{1,A'}} {\istype \Gamma {A'}}
    \and \infer {D_{1,A}} {\istype \Gamma A}
    \and \infer {D_{1,t}} {\isterm \Gamma t {A'}}
    \and \infer {D_{1,\mathrm{eq}}} {\eqtype \Gamma {A'} A}}
    {\isterm \Gamma t A}
  \end{equation*}
  We apply the induction hypothesis to $D_{1,t}$ and $D_2$ to derive $\eqtype \Gamma {A'} {B}$. The desired $\eqtype \Gamma A B$ now follows from $D_{1,\mathrm{eq}}$ by symmetry and transitivity of equality.
  The case where $D_2$ ends with a conversion is symmetric, except that it does not require the use of symmetry.

  Consider the case where $D_1$ ends with a variable rule:
  \[ \infer {D_1'}
    {\infer {\istype \Gamma {\Gamma_j}} {\isterm \Gamma {\synvar j}{\Gamma_j}}}
  \]
  Because $T$ is tight $D_2$ must end with a variable or a conversion rule. We have already dealt with the latter one.
  If~$D_2$ ends with a variable rule, then $A = \Gamma_j = B$, and we may conclude $\eqtype{\Gamma}{A}{B}$ by reflexivity.

  In the remaining case $D_1$ and $D_2$ both end with instantiations of specific rules of~$T$.
  Let $\beta$ be the map which takes each symbol $S \in \Sigma$ to the corresponding symbol rule in~$T$.
  There is a unique symbol $S \in \Sigma$, such that $D_1$ and $D_2$ both end with instantiations of $\beta(S)$:
  \begin{mathpar}
    \inferrule{
      D_1
    }{
      \isterm \Gamma {\act I S(\fammap{\synmeta{i}}{i \in \args S})} {\act I C}
    }

    \inferrule{
      D_2
    }{
      \isterm \Gamma {\act J S(\fammap{\synmeta{j}}{j \in \args S})} {\act J C}
    }
  \end{mathpar}
  Of course, $\act I C$ is just $A$ and $\act J C$ is $B$, and both heads are equal to~$t$, from which it follows that
  \begin{equation*}
    S(\fammap{\act I {\synmeta i}}{i \in \args S}) = S(\fammap{\act J {\synmeta i}}{i \in \args S}),
  \end{equation*}
  and so $I$ and $J$ are equal because they match on every $i \in \args S$.
  Thus $A = \act I C = \act J C = B$, and we may derive $\eqtype \Gamma A B$ by reflexivity.
\end{proof}

We record a more economical version of uniqueness of typing, which one can afford in reasonable situations.

\begin{corollary}
  \label{cor:accaptable-uniqueness}
  If an acceptable type theory derives $\isterm{\Gamma}{e}{A}$ and $\isterm{\Gamma}{e}{B}$ then it also derives
  $\eqtype{\Gamma}{A}{B}$.
\end{corollary}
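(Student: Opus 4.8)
The plan is to recover the two ``missing'' type judgements from the given term judgements by appealing to the Presuppositions theorem, and then to invoke \cref{thm:tight-uniqueness-of-typing} directly. First I would unpack the assumption that $T$ is acceptable (\cref{def:theory-good-properties}): in particular, $T$ is \emph{presuppositive}, \emph{tight}, and \emph{substitutive}. Since every presuppositive rule is in particular weakly presuppositive --- the presuppositions of its conclusion, being derivable from its premises alone, are a fortiori derivable from the premises together with the presuppositions of the premises --- the hypothesis of \cref{thm:presuppositions} is satisfied by $T$.

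Next I would apply \cref{thm:presuppositions} to each of the two given derivable judgements. The sole presupposition of $\isterm{\Gamma}{e}{A}$ is $\istype{\Gamma}{A}$, so this is derivable over $T$; symmetrically, $\istype{\Gamma}{B}$ is derivable as the presupposition of $\isterm{\Gamma}{e}{B}$. At this point I have in hand derivations of all four judgements $\istype{\Gamma}{A}$, $\istype{\Gamma}{B}$, $\isterm{\Gamma}{e}{A}$, and $\isterm{\Gamma}{e}{B}$, which are exactly the hypotheses required by \cref{thm:tight-uniqueness-of-typing}. Since $T$ is moreover tight and substitutive, that theorem applies and yields the desired $\eqtype{\Gamma}{A}{B}$.

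There is no genuine obstacle here, as the corollary is a direct combination of two previously established metatheorems; the only point demanding care is the bookkeeping that ``acceptable'' supplies precisely the hypotheses of both cited results --- presuppositivity (feeding the Presuppositions theorem, via the implication from presuppositive to weakly presuppositive) and tightness together with substitutivity (feeding uniqueness of typing). The economy advertised in the statement is exactly that the two type-formation judgements need not be assumed separately, as they come for free from presuppositivity.
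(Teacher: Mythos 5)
Your proposal is correct and follows exactly the paper's own argument: apply the Presuppositions theorem to each term judgement to obtain $\istype{\Gamma}{A}$ and $\istype{\Gamma}{B}$, then conclude by \cref{thm:tight-uniqueness-of-typing}. The extra bookkeeping you supply (acceptable $\Rightarrow$ presuppositive $\Rightarrow$ weakly presuppositive, and acceptable $\Rightarrow$ tight and substitutive) is precisely what the paper leaves implicit.
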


\begin{proof}
  Apply \cref{thm:presuppositions} to $\isterm{\Gamma}{e}{A}$ and $\isterm{\Gamma}{e}{B$} to obtain
  $\istype{\Gamma}{A}$ and $\istype{\Gamma}{B}$, and conclude by \cref{thm:tight-uniqueness-of-typing}.
\end{proof}

Acceptability also easily gives us uniqueness of typing for term equalities.

\begin{corollary}
  If an acceptable type theory derives $\eqterm{\Gamma}{s}{t}{A}$ and $\eqterm{\Gamma}{s}{t}{B}$ then it also derives $\eqtype{\Gamma}{A}{B}$.
\end{corollary}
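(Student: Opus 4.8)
The plan is to reduce this to the term-level uniqueness of typing already established in \cref{cor:accaptable-uniqueness}, by extracting a single term judgement from each of the two given term-equality judgements via the presuppositions theorem.

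First I would recall that an acceptable type theory is in particular presuppositive, hence weakly presuppositive, so \cref{thm:presuppositions} applies. The presuppositions of $\eqterm{\Gamma}{s}{t}{A}$ are $\istype{\Gamma}{A}$, $\isterm{\Gamma}{s}{A}$, and $\isterm{\Gamma}{t}{A}$; in particular, from the derivability of $\eqterm{\Gamma}{s}{t}{A}$ I would obtain a derivation of $\isterm{\Gamma}{s}{A}$. Symmetrically, applying \cref{thm:presuppositions} to $\eqterm{\Gamma}{s}{t}{B}$ yields a derivation of $\isterm{\Gamma}{s}{B}$.

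Having isolated the single term $s$ carrying two putative types $A$ and $B$, I would invoke \cref{cor:accaptable-uniqueness} directly: since the theory is acceptable and derives both $\isterm{\Gamma}{s}{A}$ and $\isterm{\Gamma}{s}{B}$, it derives $\eqtype{\Gamma}{A}{B}$, which is exactly the desired conclusion. (One could equally use $t$ in place of $s$; the choice is immaterial.)

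I do not expect any real obstacle here: the entire argument is a two-step composition of earlier results, and there is no induction or syntactic bookkeeping to carry out. The only thing to be slightly careful about is citing the correct presupposition slot — namely a \emph{term} presupposition such as $\isterm{\Gamma}{s}{A}$ rather than the type presupposition $\istype{\Gamma}{A}$ — since it is precisely the shared term that lets \cref{cor:accaptable-uniqueness} fire.
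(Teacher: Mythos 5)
Your proof is correct and follows essentially the same route as the paper's: extract presuppositions of the two equality judgements via \cref{thm:presuppositions} and then invoke uniqueness of typing. The only cosmetic difference is that you route through \cref{cor:accaptable-uniqueness} (so presuppositions get applied twice) whereas the paper extracts both the type and term presuppositions at once and applies \cref{thm:tight-uniqueness-of-typing} directly; the substance is identical.
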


\begin{proof}
  Again, apply \cref{thm:presuppositions} to get $\istype{\Gamma}{A}$ and $\istype{\Gamma}{B}$, and conclude by \cref{thm:tight-uniqueness-of-typing}.
\end{proof}

\subsection{An inversion principle}

Given the fact that a judgement $\Gamma \types J$ is derivable, to what extent can a derivation of it be constructed just from the information given in the judgement? We show in this section that, for sufficiently well-behaved type theories, one can read off the proof-relevant part of a derivation from the head of~$J$. The proof-irrelevant parts are the applications of conversion rules, and subderivations of equalities. The former may be arranged to always appear just once after variable and symbol rules, while the latter must be dealt with on a case-by-case basis, as a particular type theory may or may not possess an algorithm that checks derivability of equalities.

When we attempt to reconstruct a derivation from a judgement, the first obstacle we face is what types should be given to the subterms appearing in a judgement. For the variables the answer is clear, while for symbol expressions it is natural to use the types dictated by the corresponding rules, as follows.

\begin{definition}
  Let $T$ be a tight raw type theory over $\Sigma$ and $\beta$ the assignment of rules to the symbols of~$\Sigma$.
  Thus for each term symbol $S \in \Sigma$, the conclusion of $\beta(S)$ takes the form
  $
    \isterm{}
    {\genapp{S}}{A_S}
  $
  for some $A_S \in \Expr{\Ty}{\mvextend{\Sigma}{\arity{R}}}{\emptyscope}$.
  Given a term expression $t \in \Expr{\Tm}{\Sigma}{\Gamma}$, its \defemph{natural type} $\natty{\Gamma}{t} \in \Expr{\Ty}{\Sigma}{\Gamma}$ is defined by
  \begin{equation*}
    \natty{\Gamma}{\synvar{i}} \defeq \Gamma_i
    \qquad\text{and}\qquad
    \natty{\Gamma}{S(e)} \defeq \act{e} A_S,
  \end{equation*}
  %
  %
  where we used $e \in \prod_{i \in \args S} \Expr{\argclass{S}{i}}{\Sigma}{\sumscope{\gamma}{\argbinder{S}{i}}}$ as an instantiation, so that $\act{e} A_S$ is the expression in which each $\synmeta{i}(e')$ is replaced by $\tca{(e')} e_i$.
\end{definition}

To put it more simply, the natural type of $S(e)$ is the type one obtains by applying the symbol rule for~$S$ to the premises determined by~$e$.

\begin{theorem}[Inversion principle]
  \label{thm:inversion-principle}%
  Let $T$ be an acceptable type theory over~$\Sigma$.
  \begin{enumerate}

  \item If $T$ derives $\isterm{\Gamma}{\synvar{i}}{A}$ then it does so by an application of a variable rule, followed by a conversion:
    \begin{equation*}
      \infer{
        \infer{D'}{\isterm{\Gamma}{\synvar{i}}{\Gamma_i}}
        \\
       \infer{D''}{\eqtype{\Gamma}{\Gamma_i}{A}}
      }{
        \isterm{\Gamma}{\synvar{i}}{A}
      }
    \end{equation*}
  \item If $T$ derives $\isterm{\Gamma}{S(e)}{A}$ then it does so by an application of the symbol rule for~$S$, followed by a conversion:
    \begin{equation*}
      \infer{
        \infer{D'}{\isterm{\Gamma}{S(e)}{\natty{\Gamma}{S(e)}}}
        \\
       \infer{D''}{\eqtype{\Gamma}{\natty{\Gamma}{S(e)}}{A}}
      }{
        \isterm{\Gamma}{S(e)}{A}
      }
    \end{equation*}
  \item If $T$ derives $\istype{\Gamma}{S(e)}$ then it does so by an application of the symbol rule for~$S$.
  \end{enumerate}
\end{theorem}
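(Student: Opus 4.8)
The plan is to reduce to derivations of a manageable shape and then induct. Since $T$ is acceptable, it is in particular substitutive and congruous, so by \cref{thm:elimination-substitution} every derivable judgement has a substitution-free derivation; assuming as usual a strict scope system, such a derivation uses only the variable rule, the equivalence and conversion rules, and the specific rules of~$T$ (trivial renamings being identities). I would fix such a derivation $D$ of the given object judgement and proceed by induction on $D$, the key observation being that the head expression of the thesis sharply constrains which rule can occur last. A variable head $\synvar{i}$ can be produced only by the variable rule or by a term conversion; a symbol application $S(e)$ can be produced only by a symbol rule or by a term conversion; and, by tightness (\cref{def:tight-rule}), the only symbol rule whose instantiated head can be $S(e)$ is the one for~$S$. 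All other rules --- the equivalence rules, the congruence rules, and the equations --- conclude equality judgements and so cannot appear last when the thesis is an object judgement. Throughout, \cref{thm:presuppositions} supplies the auxiliary type judgements needed to instantiate reflexivity and transitivity.

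For part~(1), suppose $D$ derives $\isterm{\Gamma}{\synvar{i}}{A}$. If $D$ ends with a variable rule then $A = \Gamma_i$, so $D$ itself serves as $D'$, and $D''$ is a reflexivity instance applied to the premise $\istype{\Gamma}{\Gamma_i}$, giving $\eqtype{\Gamma}{\Gamma_i}{A}$. If $D$ ends with a term conversion from $\isterm{\Gamma}{\synvar{i}}{A'}$ along $\eqtype{\Gamma}{A'}{A}$, the induction hypothesis yields a $D'$ deriving $\isterm{\Gamma}{\synvar{i}}{\Gamma_i}$ together with a derivation of $\eqtype{\Gamma}{\Gamma_i}{A'}$; composing this with $\eqtype{\Gamma}{A'}{A}$ by transitivity (whose type presuppositions are available) produces the required $D''$. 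No other last rule is possible.

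Part~(2) is entirely analogous, with the symbol rule for~$S$ playing the role of the variable rule. If $D$ ends with a symbol rule, then by tightness this rule must be the unique symbol rule $\beta(S)$ for~$S$, and matching its instantiated head against $S(e)$ forces the instantiation to be $e$ itself, exactly as in the proof of \cref{thm:tight-uniqueness-of-typing}. Consequently the type in the conclusion is $\act{e}A_S = \natty{\Gamma}{S(e)}$, so $A = \natty{\Gamma}{S(e)}$, and we take $D' = D$ and $D''$ a reflexivity instance on the presupposition $\istype{\Gamma}{\natty{\Gamma}{S(e)}}$. The term-conversion case is handled by the induction hypothesis and transitivity precisely as in part~(1).

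Part~(3) is the simplest: a type judgement has empty boundary, so there is no type-conversion rule, and the remaining rules available in a substitution-free derivation (the variable rule and the equivalence and conversion rules) all conclude term or equality judgements; hence the only rule that can derive $\istype{\Gamma}{S(e)}$ is a symbol rule, which by tightness is $\beta(S)$, and no trailing conversion is needed. The main obstacle in the whole argument lies in part~(2): showing that matching the instantiated head of $\beta(S)$ against $S(e)$ pins down the instantiation uniquely as $e$, and hence that the conclusion's type is exactly $\act{e}A_S = \natty{\Gamma}{S(e)}$. This is routine given the naturality and associativity properties of \cref{prop:instantiation-boilerplate} and the head-matching argument already used for uniqueness of typing, but it is the step where the definitions must be unwound carefully.
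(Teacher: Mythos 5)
Your proposal is correct and follows essentially the same route as the paper's proof: reduce to a substitution-free derivation via \cref{thm:elimination-substitution}, induct on it, handle the variable/symbol-rule cases by appending a reflexivity conversion (justified via \cref{thm:presuppositions}), and absorb a trailing conversion into the inductive hypothesis by transitivity. The only cosmetic difference is that you spell out the head-matching/tightness argument for part~(2) and explicitly dispatch trivial renamings by assuming a strict scope system, points the paper leaves implicit.
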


\begin{proof}
  Let $T$ be an acceptable type theory over~$\Sigma$, and $\beta$ the assignment of symbol rules to the symbols of~$\Sigma$.
  To establish the first two claims, we proceed by induction on a substitution-free derivation $D$, which exists by \cref{thm:elimination-substitution}.

  If $D$ ends with a variable rule,
  \begin{equation*}
    \infer{
      \infer{D}{\istype{\Gamma}{\Gamma_i}}
    }{
      \isterm{\Gamma}{\synvar{i}}{\Gamma_i}
    }
  \end{equation*}
  then we obtain the desired derivation by attaching a dummy conversion rule:
  \begin{equation*}
    \infer{
      \infer{
        \infer{D}{\istype{\Gamma}{\Gamma_i}}
      }{
        \isterm{\Gamma}{\synvar{i}}{\Gamma_i}
      }
      \\
      \infer{
        \infer{D}{\istype{\Gamma}{\Gamma_i}}
      }{
        \eqtype{\Gamma}{\Gamma_i}{\Gamma_i}
      }
    }{
      \isterm{\Gamma}{\synvar{i}}{\Gamma_i}
    }
  \end{equation*}
  Otherwise, $D$ ends with an application of the conversion rule
  \begin{equation*}
    \infer{
      \infer{D'}{\isterm{\Gamma}{\synvar{i}}{B}}
      \\
      \infer{D''}{\eqtype{\Gamma}{B}{A}}
    }{
      \isterm{\Gamma}{\synvar{i}}{A}
    }
  \end{equation*}
  We apply the induction hypothesis to $D'$ to obtain a derivation of the form
  \begin{equation*}
    \infer{
      \infer{D^*}{\isterm{\Gamma}{\synvar{i}}{\Gamma_i}}
      \\
      \infer{D^{**}}{\eqtype{\Gamma}{\Gamma_i}{B}}
    }{
      \isterm{\Gamma}{\synvar{i}}{B}
    }
  \end{equation*}
  Using the transitivity rule, we combine $D^{**}$ and $D''$ into a derivation of $\eqtype{\Gamma}{\Gamma_i}{A}$, which can then be used together with $D^{*}$ to get the desired form of derivation.

  If $D$ ends with an application of the symbol rule $\beta(S)$,
  \begin{equation*}
    \infer{D'}{\isterm{\Gamma}{S(e)}{\natty{\Gamma}{S(e)}}},
  \end{equation*}
  then by \cref{thm:presuppositions} there is a derivation $D''$ of the presupposition $\istype{\Gamma}{\natty{\Gamma}{S(e)}}$. We apply reflexivity to $D''$ to obtain $\eqtype{\Gamma}{\natty{\Gamma}{S(e)}}{\natty{\Gamma}{S(e)}}$, and then conversion to get the desired derivation.
  Otherwise, $D$ ends with an application of a conversion rule, in which case we proceed as in the variable case.

  The third claim is trivial, because $\beta(S)$ is the only rule which can be instantiated to have the conclusion $\istype{\Gamma}{S(e)}$, apart from substitution rules, which we have dispensed with.
\end{proof}

The above theorem may be applied repeatedly to obtain a canonical form of the proof-relevant part of a derivation. The missing subderivations of equalities must be provided by other means. Also notice that it is easy enough to avoid insertion of unnecessary appeals to conversion rules along reflexivity.

A useful consequence of \cref{thm:inversion-principle} is the fact that a type of a term may be calculated directly from the term (and the symbol rules), as long as it has one.

\begin{corollary}
  In an acceptable type theory, a typeable term has its natural type.
\end{corollary}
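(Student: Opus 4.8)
The plan is to reduce the statement immediately to the inversion principle (\cref{thm:inversion-principle}), which has been arranged precisely so that the type at the root of the reconstructed derivation, before the final conversion, is the natural type. Concretely, suppose $T$ is acceptable and derives $\isterm{\Gamma}{t}{A}$ for some type~$A$; the goal is to exhibit a derivation of $\isterm{\Gamma}{t}{\natty{\Gamma}{t}}$. Since every term expression over~$\Sigma$ is either a variable $\synvar{i}$ or a symbol application $S(e)$, I would split into these two cases and in each case read off the desired derivation as a sub-derivation supplied by the inversion principle.

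In the variable case $t = \synvar{i}$, part~(1) of \cref{thm:inversion-principle} produces a derivation of $\isterm{\Gamma}{\synvar{i}}{A}$ consisting of a variable rule concluding $\isterm{\Gamma}{\synvar{i}}{\Gamma_i}$ followed by a conversion. Its sub-derivation $D'$ derives $\isterm{\Gamma}{\synvar{i}}{\Gamma_i}$, and since $\natty{\Gamma}{\synvar{i}} = \Gamma_i$ by definition of the natural type, $D'$ is already a derivation of $\isterm{\Gamma}{t}{\natty{\Gamma}{t}}$. In the symbol case $t = S(e)$, part~(2) of \cref{thm:inversion-principle} produces a derivation whose proof-relevant part is the symbol rule for~$S$ concluding $\isterm{\Gamma}{S(e)}{\natty{\Gamma}{S(e)}}$ followed by a conversion; the sub-derivation $D'$ then derives exactly $\isterm{\Gamma}{S(e)}{\natty{\Gamma}{S(e)}}$, which is the required judgement.

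I expect essentially no obstacle: the entire content is carried by the inversion principle, and the only bookkeeping to verify is that the type named by the variable or symbol rule at the root coincides with $\natty{\Gamma}{t}$. For variables this is the definitional equation $\natty{\Gamma}{\synvar{i}} = \Gamma_i$, and for symbol applications it is the definition $\natty{\Gamma}{S(e)} = \act{e}A_S$ together with the fact (used in stating \cref{thm:inversion-principle}) that the conclusion of the symbol rule $\beta(S)$, once instantiated at~$e$, has underlying type $\natty{\Gamma}{S(e)}$. Both matches are immediate, so the corollary is a direct corollary in the literal sense.
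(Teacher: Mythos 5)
Your proof is correct and is essentially the paper's own argument: the paper likewise observes that the derivation of $\isterm{\Gamma}{e}{\natty{\Gamma}{e}}$ appears as the subderivation $D'$ in the statement of \cref{thm:inversion-principle}. Your explicit case split on whether $t$ is a variable or a symbol application just spells out the same one-line observation in slightly more detail.
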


\begin{proof}
   Whenever $\isterm{\Gamma}{e}{A}$ is derivable, then so is $\isterm{\Gamma}{e}{\natty{\Gamma}{e}}$ because its derivation appears as a subderivation in the statement of
   \cref{thm:inversion-principle}.
\end{proof}








\section{Well-founded presentations}
\label{sec:well-founded-type-theories}

So far, our type theories have omitted one typical characteristic occurring in practice: the \emph{ordering} of the presentation of the theory.
This ordering appears, implicitly or explicitly, at three levels:
\begin{enumerate}
\item The \emph{positions of a context} usually form a finite sequence, and each type depends only on the preceding part of the context.
\item The \emph{premises of each rule} typically follow some well-founded order, usually simply a finite sequence, and the boundary of each premise depends only on the earlier ones.
\item The \emph{rules of the theory} are themselves well-founded, and each rule depends only on the earlier rules. This order is quite often infinite, in for instance theories with hierarchies of universes, and need not be total, as seen in the example below.
\end{enumerate}

At each of these three levels, the “depends only on” holds in two senses:
\begin{enumerate}
\item \emph{Raw expressions}: each type expression of a context uses only the preceding variables; in a rule, the expressions of each premise boundary only use previously-introduced metavariables; and in a theory, the raw premises and boundary of a rule only use previously-introduced symbols of the theory.
\item \emph{Derivations for presuppositivity}: each type expression in a context is a derivable type over just the preceding part of the context; each premise of a rule can be checked well-formed using just the preceding premises; and so can each rule using just the earlier rules.
\end{enumerate}

\begin{example}
The $\symPi$-formation rule uses no symbols of the signature in its premises or boundary, and relies only on structural rules for its well-formedness.
The rules for $\lambda$-abstraction and function application both use $\symPi$ in their raw expressions, and depend on the $\symPi$-formation rule for their reasonability, but not on any other earlier symbols or rules.
And the $\beta$-reduction rule in turn depends on all three of these.

Similarly, there is a natural order within the rules for $\symSigma$-types.
On the other hand, neither of the $\symSigma$- or $\symPi$-type groups naturally precedes the other, and it would be unnatural to force them into a total order.
\end{example}

Traditionally, well-foundedness is treated in two different ways, depending on the levels.
Since the class of all contexts of a theory is formally defined --- contexts are ``user-definable'' --- their well-foundedness must be explicitly mandated somehow; and so it is, usually by the context judgement $\iscxt{ \Gamma }$.

Rules and theories by contrast are not “user-definable”: each development usually presents a single theory, or a few, with a specific collection of rules.
The ordering on these can therefore be left entirely unstated,
but it is almost always clearly present.
The writer ensures it when setting up the theory; the reader follows it when understanding the theory, and convincing themself of its reasonableness; and it is respected in later proofs and constructions.

It would be jarring, for instance, and often logically impossible, to give the semantics of $\beta$-reduction before that of $\lambda$-abstraction.
On the other hand, it would be unsurprising if a writer introduced the rules for $\symPi$-types before those for $\symSigma$-types, but then gave semantics with $\symSigma$-types first.
Overall, the implicit partial order on rules is always respected, but no particular total extension of it is.

Defining what it means for a presentation to be ordered is a little subtler than one might expect;
we work up to it gradually, considering contexts first, then rules, and finally theories.

\subsection{Sequential contexts}
\label{sec:sequential-contexts}

In our setting, with scoped syntax, and with contexts as maps from positions to types (we henceforth refer to these as \emph{flat} contexts), traditional sequential contexts may be recovered in various ways.
They are all straightforwardly equivalent --- indeed, a sufficiently informal statement of the traditional definition could be read as any of them --- but explicitly comparing them provides a useful warmup for the less straightforward cases with rules and type theories later.

Recall that $\numscope{n}$ denotes the sum of $n \in \N$ copies of $\numscope{1}$.
When working with sequential contexts, we will identify the positions of $\numscope{n}$ with $\{0, \ldots, n-1\}$, and denote the evident “subscope inclusion” maps by $\numscopemap{i}{j} : \numscope{i} \to \numscope{j}$.

\begin{definition}[Sequential context I]
  \label{def:seq-cxt-as-wellpresentedness}
  A \defemph{raw sequential context} over a signature $\Sigma$ is a list $\Gamma
  = [\Gamma_0,\ldots,\Gamma_{n-1}]$, where $\Gamma_i \in \ExprTy{\Sigma}{\numscope{i}}$, for each $i \in \numscope{n}$.
  We write $\Gamma_{<i}$ for the initial segment $[\Gamma_0, \ldots, \Gamma_{i-1}]$.
  The \defemph{flattening} of a raw sequential context is the raw flat context of scope $\numscope{n}$ whose $i$-th type is the weakening $\rename{{\numscopemap{i}{n}}}(\Gamma_i)$.
  We typically leave flattening implicit, writing $\Gamma$ both for a sequential context and its flattening.
  
  Given a signature $\Sigma$ and a raw type theory $T$ over it,
  a raw sequential context $\Gamma$ over $\Sigma$ is \defemph{well-formed over~$T$} if for each $i \in \Gamma$, the judgement $\istype{\Gamma_{<i}}{\Gamma_i}$ is derivable.
\end{definition}

Alternately, we can define sequentiality as a property of flat contexts:

\begin{definition}[Sequential context II]
  \label{def:seq-cxt-by-variable-occurrence}%
  A raw flat context $\Gamma$ of scope $\numscope{n}$ is \defemph{sequential} if for each $i \in \numscope{n}$, all variables $\synvar{j}$ occurring in $\Gamma_i$ have $j < i$.
  Thus each $\Gamma_i$ is uniquely of the form $\rename{{\numscopemap{i}{n}}}(\overline{\Gamma_i})$,
  from which we define the initial segments $\Gamma_{<i}$ as sequential raw contexts of scope $\numscope{i}$.

  A sequential context $\Gamma$ over $\Sigma$ is \defemph{well-formed over $T$} if for each $i \in \Gamma$, the judgement $\istype{\Gamma_{<i}}{\Gamma_i}$ is derivable.
\end{definition}

Finally, we can define well-formed flat contexts via the traditional derivation rules,
without reference to raw sequential contexts.

\begin{definition}[Sequential context III]
  \label{def:seq-cxt-by-rules}%
  The property $\iscxt{\Gamma}$, read as “$\Gamma$ is \defemph{sequentially well-formed}” over a given theory, is the inductive predicate on flat contexts defined by the following closure conditions, the latter for all suitable $\Gamma$, $A$:
\begin{mathpar}
\infer{{}
}{\iscxt{\emptycxt}}
\and
\infer{\iscxt{\Gamma} \\ \istype{\Gamma}{A}}{\iscxt{\ctxextend{\Gamma}{A}}}
\end{mathpar}
\end{definition}

Each of the above definitions moreover has two possible readings: \emph{proof-relevant}, where by derivability of a judgement $\istype{\Gamma}{A}$ we mean that a specific derivation is given, and \emph{proof-irrelevant}, where we merely mean that some derivation exists.
We take the proof-relevant reading in all cases.

\begin{proposition}  \Cref{def:seq-cxt-as-wellpresentedness,def:seq-cxt-by-variable-occurrence,def:seq-cxt-by-rules} are all equivalent, as predicates on flat contexts, in both their proof-relevant and -irrelevant forms.
\end{proposition}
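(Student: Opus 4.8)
The plan is to factor the proposition into two equivalences: first, that \cref{def:seq-cxt-as-wellpresentedness,def:seq-cxt-by-variable-occurrence} describe the same predicate, essentially by unwinding definitions; and second, that \cref{def:seq-cxt-by-variable-occurrence,def:seq-cxt-by-rules} agree, which carries the real content and is proved by induction. Chaining the two gives the full cycle of equivalences.

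For the first equivalence, I would observe that the variable-occurrence condition of \cref{def:seq-cxt-by-variable-occurrence} is precisely the statement that each $\Gamma_i$ factors, necessarily uniquely, through the subscope inclusion $\numscopemap{i}{n}$, that is, $\Gamma_i = \rename{\numscopemap{i}{n}}(\overline{\Gamma_i})$ with $\overline{\Gamma_i} \in \ExprTy{\Sigma}{\numscope{i}}$. Assigning to a sequential flat context the list $[\overline{\Gamma_0},\ldots,\overline{\Gamma_{n-1}}]$ is therefore mutually inverse to the flattening operation of \cref{def:seq-cxt-as-wellpresentedness}, giving a bijection between sequential flat contexts and raw sequential contexts. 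Under this bijection the two notions of well-formedness are literally the same family of judgements $\istype{\Gamma_{<i}}{\Gamma_i}$, since the initial segments $\Gamma_{<i}$ are defined identically on both sides, so they carry the same proof-relevant data and a fortiori the same proof-irrelevant content.

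For the second equivalence I would set up a bijection between derivations of $\iscxt{\Gamma}$ and families of derivations witnessing well-formedness in the sense of \cref{def:seq-cxt-by-variable-occurrence}. In the direction from $\iscxt{\Gamma}$ to sequential-and-well-formed I induct on the $\iscxt{}$-derivation: the base rule yields the empty context $\emptycxt$, which is vacuously sequential and vacuously well-formed, and the extension rule applied to $\iscxt{\Gamma}$ and $\istype{\Gamma}{A}$ produces $\ctxextend{\Gamma}{A}$, which is sequential because $A$ lives over $\Gamma$ and hence uses only earlier variables, and well-formed because its top initial segment is $\Gamma$ with witness $\istype{\Gamma}{A}$, while the lower segments and their witnesses are inherited from the subderivation. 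Conversely I induct on the length $n$: for $n = 0$ apply the base rule; for $n > 0$ decompose a sequential well-formed $\Gamma$ as $\ctxextend{\Gamma_{<n-1}}{\overline{\Gamma_{n-1}}}$, note that $\Gamma_{<n-1}$ is again sequential and well-formed (inheriting its witnesses), invoke the inductive hypothesis to obtain $\iscxt{\Gamma_{<n-1}}$, and apply the extension rule with the top witness $\istype{\Gamma_{<n-1}}{\Gamma_{n-1}}$. Because each extension step of a $\iscxt{}$-derivation carries exactly one derivation of $\istype{\Gamma}{A}$ alongside its recursive premise, these two constructions are visibly mutually inverse, giving the proof-relevant equivalence; passing to mere existence then gives the proof-irrelevant one.

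The main obstacle I anticipate is purely bureaucratic: matching the context-extension operation $\ctxextend{-}{-}$, which weakens the earlier types along the coproduct inclusion $\inlscope$, against the canonical factorisation $\Gamma_i = \rename{\numscopemap{i}{n}}(\overline{\Gamma_i})$, and checking that the initial segments agree on the nose, for example that $(\ctxextend{\Gamma}{A})_{<n} = \Gamma$ and $\numscopemap{j}{n+1} = \inlscope \circ \numscopemap{j}{n}$, so that the judgements $\istype{\Gamma_{<i}}{\Gamma_i}$ produced by the two readings coincide rather than merely agreeing up to a renaming. Over a strict scope system these are literal equalities and the matching is immediate; in the general case they hold up to the canonical coherence isomorphisms between subscopes, which must be threaded through the argument but introduce no essential difficulty.
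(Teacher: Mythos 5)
Your proposal is correct and follows the same route as the paper, whose entire proof is the one-line observation that the variable-occurrence constraint of \cref{def:seq-cxt-by-variable-occurrence} characterises the images of the weakenings $\rename{{\numscopemap{i}{n}}}$ — exactly the key fact you use for your first equivalence — with the rest declared "essentially straightforward." You have simply spelled out the straightforward part (the bijection of un-weakened lists with sequential flat contexts, and the induction matching $\iscxt{}$-derivations with families of well-formedness witnesses), including the strictness caveat the paper elides.
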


\begin{proof}
Essentially straightforward, given the fact, already mentioned in \cref{def:seq-cxt-by-variable-occurrence}, that the variable-occurrence constraint there precisely characterises the images of the weakenings $\rename{{\numscopemap{i}{n}}} : \ExprTy{\Sigma}{\numscope{i}} \injto \ExprTy{\Sigma}{\numscope{n}}$.
\end{proof}

Of these definitions, \cref{def:seq-cxt-by-variable-occurrence} is the simplest to state, especially if one sweeps under the rug the inverse-weakening required for defining initial segments.
However, when spelling out details carefully, this inverse-weakening is tedious to keep track of.
When we bump these definitions up to sequential rules or type theories, therefore, we will focus on approaches based on \cref{def:seq-cxt-as-wellpresentedness,def:seq-cxt-by-rules}.

\subsection{Sequential rules}
\label{sec:sequential-rules}

Next, we wish to define sequential \emph{rules}, in which premises form a finite sequence, and each refers only to the previous ones.
Analogously to \cref{def:seq-cxt-by-variable-occurrence}, the easiest version to state is to start from an ordinary raw rule, and add desirable properties, together with restrictions on how earlier parts can be used in later parts:

\begin{definition}[Sequential rule, provisional]
  Let $R$ be a tight raw rule over a signature~$\Sigma$, with premises indexed by $\numscope{n}$, and $\beta$ the bijection witnessing its tightness.
  We say that $R$ is \defemph{sequential} if for all $i \in \numscope{n}$ and $j \in \arity{R}$, if $\synmeta{j}$ appears in the $i$-th premise, then $\beta(j) < i$.

  Moreover, say that $R$ is \defemph{(sequentially) well-formed}
  over a raw type theory~$T$, also over~$\Sigma$,
  when for all $i \in [n]$, the presuppositions of the $i$-th premise of~$R$
  can be derived from the premises indexed by~$[i]$.
\end{definition}

This definition is adequate, but is in several regards somewhat unsatisfying:
\begin{enumerate}
\item
  We have said here, as in \cref{def:seq-cxt-by-variable-occurrence}, that the premises are formed over the extension by all the metavariables, and their presuppositions derived from all the premises, but use only the preceding ones.
  When applying this condition, one typically wants to consider them as formed, or derived, over the extension by just the preceding initial segment. So rather than restricting them back there, and having to keep track of such restriction, it is simpler to say from the start, as in \cref{def:seq-cxt-as-wellpresentedness}, that they are formed, or derived, over those initial segments.
  
\item “Tightness” gives a redundancy of data in two ways.
  Firstly, the heads of all object-judgement premises are redundant: each head must be the corresponding metavariable, applied to all variables of its scope.
  Besides this, the arity itself is determined by the indexing family of the rule together with the scopes and forms of the premises.
\end{enumerate}

These issues can be remedied by defining sequential presentations inductively, analogously to \cref{def:seq-cxt-by-rules}, and adding each premise not as a full judgement but just its \emph{boundary}, whose head, if any, will be filled in automatically to ensure tightness by construction.

\begin{definition}
  \label{def:sequential-premise-family}%
  Given a signature $\Sigma$ and a raw type theory $T$ over it, we define inductively the \defemph{sequential premise-families} $P$ with arities $\arity{P}$, and simultaneously their \defemph{flattenings} as families of judgements over $\mvextend{\Sigma}{\arity{P}}$, as follows.

  \begin{enumerate}
  \item
    The \defemph{empty sequential premise-family} $\emptyfam$ has empty arity $\arity{\emptyfam}$, and its flattening is the empty family.
    
  \item
    Let $P$ be a sequential premise-family, with arity $\arity{P}$ and flattening $F$.
    Let $\Delta \typesboundary B$ be a boundary over $\mvextend{\Sigma}{\arity{P}}$, such that all presuppositions of $\Delta \typesboundary B$ are derivable over $\mvextend{(T+F)}{\arity{P}}$ (i.e.\ the translation of $T+F$ from $\Sigma$ to $\mvextend{\Sigma}{\arity{P}}$), and $\Delta$ is a well-formed sequential context over the same theory.

    Then there is an \defemph{extension sequential premise-family} $P;(\Delta \typesboundary B)$.
    
    If $\Delta \typesboundary B$ is an object boundary of class~$c$, then the associated arity $\arity{P;(\Delta \typesboundary B)}$ is $\arity{P} + \singletonfamily{(c,\position{\Delta})}$; or if $\Delta \typesboundary B$ is an equality boundary, $\arity{P;(\Delta \typesboundary B)}$ is just $\arity{P}$.
    
    The flattening of $P;(\Delta \typesboundary B)$ is $\famtuple{\act{\iota}(\Gamma_i \typesjudgement J_i)}{i \in I} + \singletonfamily{(\act{\iota}{\Delta}) \typesjudgement J}$, where $\iota$ is the inclusion $\arity{P} \to \arity{P;(\Delta \typesboundary B)}$, and $J$ is $\act{\iota}B$
    with the head, if any, filled by the expression $\synmeta{\star}(\fammap{\synvar{i}}{i \in \delta})$, where $\star$ is the new argument adjoined to the arity.
  \end{enumerate}
\end{definition}

\noindent%
Notationally, we will not distinguish the flattening from the sequential premise-family itself.

\begin{definition}
  \label{def:sequential-rule}%
  A \defemph{sequential rule} $\SequentialRule{P}{J}$ over $\Sigma$ and $T$ is a sequential premise-family~$P$, together with a judgement $ \Gamma \typesjudgement J$ over $\mvextend{\Sigma}{\arity{P}}$,
  the \defemph{conclusion}, whose presuppositions are derivable over $T + P$ (with $T$ translated to the metavariable extension).
  A sequential rule has an evident flattening as a raw rule.
\end{definition}

\noindent%
Again, we do not notate the flattening explicitly.
Note that as in the definition of raw rules the conclusion~$J$ has an empty context.

The addition of a boundary in the extension step of  \cref{def:sequential-premise-family} is precisely analogous to the traditional context extension rule, as in \cref{def:seq-cxt-by-rules}.
There, the extension is \emph{specified} just by a type $A$, but its \emph{effect} is to add a term-of-type judgement $\synvar{i} \of A$,
where the term is automatically determined to be a (fresh) variable, rather than specified as input to the extension.

Reading \cref{def:sequential-premise-family} with an eye towards computer-formalisation, one may note it can be formalized in several ways: as an inductive-recursive definition of a set with functions to arities and families of judgements; as an inductive family of sets indexed over pairs of an arity and a family of judgements; or an $\N$-indexed sequence of sets together with functions to arities and families, by induction on $n \in \N$, the length of the family.
These are all equivalent, by standard generalities about inductive definitions.

\begin{proposition}
  The flattening of a sequential rule with empty conclusion context is acceptable.
\end{proposition}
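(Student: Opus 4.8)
The plan is to verify the three defining conditions of acceptability (\cref{def:acceptable-rule}) for the flattening $R$ of a sequential rule $\SequentialRule{P}{J}$: that $R$ is tight, that it is presuppositive over $T$, and that it has empty conclusion context. The last is immediate, since the conclusion of $R$ is the conclusion $\Gamma \typesjudgement J$ of the sequential rule, assumed by hypothesis to have empty context.

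For tightness, I would induct on the construction of the premise-family $P$ in \cref{def:sequential-premise-family}, building the witnessing bijection $\beta$ of \cref{def:tight-rule} alongside the flattening. The empty premise-family has empty arity and empty flattening, so $\beta$ is vacuous. At an extension step $P;(\Delta \typesboundary B)$: if $B$ is an equality boundary the arity is unchanged and the new flattened premise is an equality judgement, hence not an object premise, so the bijection is simply inherited (after translating the earlier premises along the arity inclusion $\iota$); if $B$ is an object boundary of class $c$, then the arity gains exactly one argument $\star$ whose class is $c$ and whose binder is $\position{\Delta}$, while the flattening gains exactly one object premise, namely $\act{\iota}\Delta \typesjudgement J$ with head $\synmeta{\star}(\fammap{\synvar{i}}{i \in \position{\Delta}})$. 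Setting $\beta(\star)$ to this premise, the three clauses of \cref{def:tight-rule} hold by construction: the context $\act{\iota}\Delta$ has scope $\position{\Delta} = \argbinder{\arity{R}}{\star}$ (signature maps preserve scopes), the judgement form is $c$, and the head is precisely the prescribed metavariable application. It then only remains to check that $\beta$ stays a bijection on the older arguments after translation along $\iota$; this holds because $\iota$ preserves scopes and judgement forms and sends each old head $\synmeta{k}(\cdots)$ to $\synmeta{\iota(k)}(\cdots)$, matching the reindexed argument.

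For presuppositivity (\cref{def:weakly-presuppositive-rule}(2)), I must show that every presupposition of each premise, and of the conclusion, of $R$ is derivable over $\mvextend{T}{\arity{R}}$ from the premises of $R$. The construction supplies such derivations, but over smaller signatures and with only the \emph{earlier} premises available. I would close the gap in two moves. First, since the presuppositions of a completed object judgement $\plug{B}{e}$ coincide with those of its boundary $B$ (presuppositions read off boundary slots only), the derivability of the presuppositions of $\Delta \typesboundary B$ guaranteed at each extension step is exactly derivability of the presuppositions of the corresponding flattened premise. Second, I transport these derivations up the chain of arity inclusions to land over $\mvextend{\Sigma}{\arity{R}}$, using functoriality of derivations along signature maps (\cref{cor:derivations-functorial-signature-maps}) together with naturality of presuppositions (\cref{prop:presuppositions-action-signature-map}). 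The earlier premises appear in the supplied derivations as adjoined axioms (the ``$+F$'' in $\mvextend{(T+F)}{\arity{P}}$), whereas presuppositivity asks for derivability from the premises taken as hypotheses; these are interchangeable by the elementary replacement of each nullary-rule use $\der{R'}{()}$ by a $\hyp$ and conversely, and enlarging the hypothesis family from the earlier premises to all premises only makes derivations easier. The conclusion is handled identically, using that \cref{def:sequential-rule} provides derivability of its presuppositions over $T$ together with all of $P$.

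The routine-but-delicate part, and the main obstacle, is the bookkeeping in the presuppositivity argument: tracking the several arity inclusions and their induced signature maps so that each supplied derivation is transported to the single signature $\mvextend{\Sigma}{\arity{R}}$ with the correct premises available, and confirming that the axiom-versus-hypothesis interchange and the enlargement of the hypothesis family behave as claimed. Tightness, by contrast, is essentially true by construction, once the inductive invariant for $\beta$ is set up and seen to survive translation along the arity inclusions.
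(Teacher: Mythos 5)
Your proposal is correct and follows essentially the same route as the paper, whose own proof is a two-sentence sketch stating that tightness is immediate by construction of the arity and the heads of object premises, and that presuppositivity follows from the well-formedness conditions in the definitions of sequential rules and premise-families, translated inductively along the metavariable extensions. Your write-up is simply a detailed unpacking of that sketch, including the same translation-along-arity-inclusions step the paper alludes to.
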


\begin{proof}
  Tightness is immediate, by construction of the arity of the premise-family and the heads of its object-judgement premises.
  Presuppositivity is similarly by construction, from the well-formedness conditions in the definitions of sequential rules and premise-families, with the latter inductively translated along metavariable extensions as the premise-family is built up.
\end{proof}

As we defined premise-families using just boundaries rather than complete judgements, similarly when we define well-founded type theories we will specify them using sequential rules whose conclusions have no heads.
We will also (for substitutivity) restrict attention to empty conclusion contexts.

\begin{definition}
  \label{def:rule-boundary}%
  A \defemph{sequential rule-boundary} $\RuleBoundary{P}{B}$ over $\Sigma$ and $T$ is a sequential premise-family~$P$, together with a boundary $\Gamma \typesboundary B$ over $\mvextend{\Sigma}{\arity{P}}$ with empty context, whose presuppositions are derivable over $T + P$.
\end{definition}

Rule-boundaries can of course be completed to rules, by filling in a head if required.

\begin{definition}
  \label{def:rule-boundary-realisation}%
  The \defemph{realisation} of a sequential rule-boundary $R = (\RuleBoundary{P}{B})$ as a sequential rule (or, via flattening, a raw rule) is
  defined according to the form of $B$:
  \begin{enumerate}
  \item
    If $B$ is an object boundary of class~$c$, then given a symbol $\symS \in \Sigma$ with arity $\arity{P}$ and class~$c$, the \defemph{realisation $\plug{R}{\symS}$ of $R$ with $\symS$} is the sequential rule
    \begin{equation*}
      \SequentialRule
      {P}
      {\plug{B}{\genapp{S}}}
    \end{equation*}
    given by completing $B$ with the generic application of~$S$.

    \item
    If $B$ is an equality, no further input is required: the \defemph{realisation of $R$} is just $\RuleBoundary{P}{B}$ with $B$ viewed as a judgement.
\end{enumerate}
\end{definition}

This gives, by construction:

\begin{propositionwithqed}
  The realisation of an object rule-boundary for~$\symS$ yields a symbol rule for~$\symS$.
\end{propositionwithqed}

\begin{example}
  The sequential rule-boundary
  \begin{equation*}
    \RuleBoundary{
      (\istype{}{\symA}) ;
      (\istype{x \of \symA}{\symB(x)})
    }{
      (\istypebdry{})
    }
  \end{equation*}
  realised with the symbol $\symPi$ gives the sequential rule
  \begin{equation*}
    \SequentialRule{
      (\istype{}{\symA}) ;
      (\istype{x \of \symA}{\symB(x)})
    }{
      (\istype{}{\symPi(\symA, \symB(x))})
    }
  \end{equation*}
  whose flattening is the usual formation rule for dependent products, as in \cref{ex:pi-congruence-rule}.
  With $\symSigma$ instead of $\symPi$, it gives the formation rule for dependent sums.
\end{example}

\subsection{Well-presented rules}
\label{sec:well-presented-rules}

Sequential rules and rule-boundaries give a satisfactory treatment covering most example theories, and sufficing for many purposes, including implementation in proof assistants.
For instance, the Andromeda proof assistant~\citep{andromeda,bauer19} implements a variant of sequential rules and rule boundaries in the trusted nucleus.

Here we consider the generalisation from finite sequences to arbitrary well-founded orders, partly to encompass infinitary rules, but mainly as a warm-up for well-founded theories.

\Cref{def:well-founded-premises-shape,def:well-founded-premise-family,def:realisation-well-presented-rule} given below are rather long and pedantic, so we give first a guiding overview.
We follow the pattern first seen in \cref{def:seq-cxt-as-wellpresentedness}, where the components of the definitions must be stratified into several stages, with each stage making use of functions defined on earlier stages.

\begin{enumerate}
\item
  At the first stage, we can specify just the \emph{shape} of the family of premises.
  This consists of a well-ordered set $(P,<)$, to index the premises,
  along with for each $i \in P$, the judgement form~$\varphi_i$ and scope~$\gamma_i$ for the $i$-th premise.

  Form this data we can compute the arity of the rule, $\arity{P}$, and more generally the arity $\arity{P_{<i}}$, specifying what metavariables may occur in the $i$-th premise.
  
\item
  At the second stage, with the arities $\arity{P_{<i}}$ available, we can specify the \emph{raw syntax} of the premises.
  The $i$-th premise $P_i$ is given by a boundary $\Gamma_i \typesboundary B_i$ of form~$\varphi_i$, scope~$\gamma_i$, and written over $\mvextend{\Sigma}{\arity{P_{<i}}}$.

  From these, filling in heads of object premises as required for tightness, we can construct the flattening of $P$ as family of judgements over $\mvextend{\Sigma}{\arity{P}}$, and more generally the flattening of $P_{<i}$ over $\mvextend{\Sigma}{\arity{P_{<i}}}$.

\item
  At the third stage, with the flattenings available, we can now specify the \emph{well-formedness} conditions.
  Derivations of presuppositions of $P_i$ should be given over the ambient theory~$T$, translated up to $\mvextend{\Sigma}{\arity{P_{<i}}}$, and with (the flattenings of) preceding premises $P_{<i}$ available as hypotheses.

\item
  We are now done with the hard part.
  Having specified the premises, the conclusion is given as in the sequential case, as a well-formed boundary $B$ over $\mvextend{\Sigma}{\arity{P}}$, whose head (if $B$ is of object form) will later be filled in to yield a symbol rule.
\end{enumerate}

While the above explanation sounds plausible, it sweeps several technical subtleties under the rug.
Most importantly, since each premise is specified over its own signature $\mvextend{\Sigma}{\arity{P_{<i}}}$, we need to handle the translations between these extensions, and up to the overall extension $\mvextend{\Sigma}{\arity{P}}$.
Spelling out all details in full, we have the following definitions.

\begin{definition}
  \label{def:well-founded-premises-shape}%
  A \defemph{well-founded premises-shape} $(I, S)$ is given by
  a well-founded set $(I, {<})$, and
  a family $S = \fammap{(\varphi_i,\gamma_i)}{i \in I}$, where $\varphi_i$ is a judgement form and $\gamma_i$ a scope.
  Given these, we define:
  \begin{enumerate}
  \item
    The arity $\arity{S}$ of $S$ is the subfamily $\fammap{(\varphi_i, \gamma_i)}{\set{i \in I \such \varphi_i \in \set{\Ty, \Tm}}}$ of the object forms of~$S$.

  \item
    For each $i \in I$, the initial segment $S_{<i} \defeq \fammap{(\varphi_j, \gamma_j)}{j < i}$ is itself a well-founded premises-shape indexed by the initial segment $\initialSegment{i} \subseteq I$, and hence it also has an associated arity $\arity{S_{< i}}$.

  \item
    For each $i < j \in I$, there are evident family maps $\arity{S_{<i}} \to \arity{S_{<j}}$ and hence $S_{<i} \to S_{<j}$, satisfying evident composition conditions with each other and with the subfamily inclusions $S_{<j} \to S$.
  \end{enumerate}
\end{definition}

\begin{definition}
  \label{def:well-founded-premise-family}%
  Given a signature~$\Sigma$ and a well-founded premises-shape~$(I, S)$ as in \cref{def:well-founded-premises-shape}, a \defemph{well-founded premise-family~$P$} is given by
  a family $B = \fammap{B_i}{i \in I}$ where $B_i$ is a boundary of form $\varphi_i$ in scope $\gamma_i$, and over $\mvextend{\Sigma}{\arity{S_{<i}}}$.
  Given these, we define:
  \begin{enumerate}
  \item
    The \defemph{flattening~$P^\flat$} is the family of judgements $\fammap{P_i}{i \in I}$ over $\mvextend{\Sigma}{\arity{S}}$, where $P_i$ is the boundary $B_i$ translated along the inclusion $\mvextend{\Sigma}{\arity{S_{<i}}} \to \mvextend{\Sigma}{\arity{S}}$, and when $\varphi_i \in \set{\Ty, \Tm}$ completed with the head expression $\synmeta{i}(\fammap{\synvar{j}}{j \in \gamma_i})$.

  \item
    For each $i \in I$, the initial segment $B_{<i}$ yields a well-founded premise family $P_{<i}$ with respect to the well-founded premises-shape $S_{<i}$ indexed by the initial segment $\initialSegment{i}$. Thus it has its own flattening~$P_{<i}^\flat$.

  \item
    For each $j < i$, the judgement $P_j$ as a member of the flattening $P_{<i}^\flat$ translated along the signature inclusion $\mvextend{\Sigma}{\arity{S_{<i}}} \to \mvextend{\Sigma}{\arity{S}}$ yields the same flattening~$P_j$, but as a member of~$P^\flat$.

    This exhibits the translation of $P_{<i}^\flat$ along the inclusion $\mvextend{\Sigma}{\arity{S_{<i}}} \to \mvextend{\Sigma}{\arity{S}}$ as a subfamily of~$P^\flat$.

  \item
    Similarly, for all $k < j < i$, the judgement $P_k$ as a member of the flattening $P_{<j}^\flat$ translated along the signature inclusion $\mvextend{\Sigma}{\arity{S_{<j}}} \to \mvextend{\Sigma}{\arity{S_{<i}}}$ yields the same flattening~$P_k$, but as a member of~$P_{<i}^\flat$.

    This exhibits the translation of $P_{<j}^\flat$ along the inclusion $\mvextend{\Sigma}{\arity{S_{<j}}} \to \mvextend{\Sigma}{\arity{S_{<i}}}$ as a subfamily of $P_{<i}^\flat$.
  \end{enumerate}
\end{definition}

\begin{definition} \label{def:well-presented-premise-family}
  A well-founded premise-family $P$ as in \cref{def:well-founded-premise-family} is
  \emph{well-formed over $T$} if for each $i \in I$, there are derivations of all presuppositions of~$B_i$ from hypotheses $P_{<i}^\flat$, in the translation of $T$ to $\mvextend{\Sigma}{\arity{S_{<i}}}$.

  A \defemph{well-presented premise-family} is a well-formed well-founded premise-family~$P$.
  Its arity $\arity{P}$ is the associated arity $\arity{S}$ of the underlying premises-shape~$S$.
\end{definition}

\noindent
When no confusion can occur, we will write the flattening of a well-founded premise-family~$P$ just as $P$, rather than $P^\flat$.

\begin{definition}
  \label{def:well-presented-rule-boundary}%
  A \defemph{well-presented rule-boundary} $\RuleBoundary{P}{B}$ over $\Sigma$, $T$ consists of a well-presented premise-family~$P$ together with a boundary with empty context $\typesboundary B$ over $\Sigma + \arity{P}$, the \defemph{conclusion boundary}, such that all presuppositions of~$B$ derivable from~$P$ in the translation of~$T$ to $\Sigma + \arity{P}$.
  The \defemph{arity} of such a rule-boundary is the arity~$\arity{P}$ of its premise-family.
\end{definition}

\begin{definition}
  \label{def:realisation-well-presented-rule}%
  The \defemph{realisation} of a well-presented rule-boundary $R = (\RuleBoundary{P}{B})$ as a raw rule is defined according to the form of $B$.
  \begin{enumerate}
  \item
    If $B$ is an object boundary of class~$c$, then given a symbol $\symS \in \Sigma$ of arity $\arity{P}$ and class~$c$, the \defemph{realisation $\plug{R}{\symS}$ of $R$ with $\symS$} has premises the flattening of $P$, and conclusion $\plug{B}{\genapp{S}}$.

  \item
    If $B$ is an equality boundary, no extra input is required: the \defemph{realisation of $R$} has premises the flattening of $P$, and conclusion just $B$ viewed as an equality judgement.
  \end{enumerate}
\end{definition}

\subsection{Well-presented type theories}
\label{sec:well-presented-theories}

Finally, we reach well-foundedness for type theories.
Once again, a by now familiar pattern emerges.
It is fairly straightforward to define well-foundedness as an after-market property of acceptable type theories,
but a better definition is obtained by putting in a little more work.

We start with the simpler version.

\begin{definition}
  \label{def:well-founded-theory}%
  Let $T = \fammap{R_i}{i \in I}$ be an acceptable type theory over a signature $\Sigma$, and let $\beta : |\Sigma| \to I$ the bijection from symbols to their rules.
  Then $T$ is \defemph{well-founded} when all its rules are well-founded, and the index set $I$ has a well-founded order~$<$, such that:
  \begin{enumerate}
  \item If $\symS \in \Sigma$ appears in $R_i$ then $\beta(\symS) < i$.
  \item Each $R_j$ has derivations of presuppositions that only refer to symbols $\symS$ with $\beta(\symS) < j$ and rules~$R_i$ with $i < j$.
  \end{enumerate}
\end{definition}

For the more refined version, we follow a similar pattern to what we saw for well-presented rules in \cref{sec:well-presented-rules}, with the definition stratified into three stages:

\begin{enumerate}
  \item First, the \emph{shape}: a well-founded order (to index the rules), and the premises-shapes and judgement forms of all rules.
  This suffices to compute the signature of the theory, and of its initial segments.
  
  \item Next, the \emph{raw} part: for each rule of the theory, a well-founded premise-family, and (raw) conclusion boundaries, of the shapes and forms specified in the first stage, and over the signature of the appropriate initial segment.
  These suffice to compute the flattening of the theory as a raw type theory, and of its initial segments.

  \item Finally, the \emph{derivations} showing well-formedness of each rule over the preceding initial segment.
\end{enumerate}

Having previously given \cref{def:well-founded-premises-shape,def:well-founded-premise-family,def:well-presented-premise-family} in rather excruciating detail, we proceed here slightly more concisely, trusting the reader to be able to fill in the elided details along the lines spelled out in those definitions.

\begin{definition} \leavevmode
  \label{def:well-presented-type-theory}%
  \begin{enumerate}
  \item A \defemph{well-founded type theory shape $T$} consists of a well-founded order $(I,<)$, together with for each $i \in I$, a well-founded premises-shape $S_i$ and judgement form $\varphi_i$ (seen as the premises shape and conclusion form of the $i$th rule).

    From these, we can define the total signature $\Sigma_T$ of $T$: its symbols are just $\set{ \symS_i \in I \such \varphi_i \in \set{\Ty,\Tm}}$, with $\symS_i$ having arity $\arity{S_i}$ and class $\varphi_i$.
    Similarly, we get signatures for initial segments $\Sigma_{T_{<i}}$, and signature maps between these, and from these to $\Sigma_T$.

  \item A \defemph{well-founded raw type theory $T$} consists of a well-founded type theory shape as above (which we also call $T$), together with for each $i \in I$, a well-founded premise-family $P_i$ of shape $S_i$ and a boundary $B_i$ of form $\varphi_i$ over the signature~$\Sigma_{T_{<i}}$.

    From these, we can define the flattening $T^\flat$ of $T$ as a raw type theory over $\Sigma_T$.
    Its rules consist of the realisations of all rule-boundaries $\RuleBoundary{P_i}{B_i}$, using (when $i$ is of object form) the symbol $\symS_i$, together with the associated congruence rules of the object rules thus added.
    Similarly, we obtain the flattening $T_{<i}^\flat$ of each initial segments of $T$, as a raw type theory over $\Sigma_{T_{<i}}$.

  \item A \defemph{well-presented type theory $T$} consists of a well-founded raw type theory $T$ as above that is additionally \defemph{well-formed}, in that it is equipped with, for each $i$, derivations exhibiting $\RuleBoundary{P_i}{B_i}$ as a well-formed rule-boundary over $T_{<i}^\flat$.
  \end{enumerate}
\end{definition}

\noindent As with well-presented rules, we will not notate flattening, when there is no ambiguity.

\begin{proposition}
  The flattening of a well-presented type theory $T$ is acceptable and well-founded.
\end{proposition}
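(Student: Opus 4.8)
The plan is to unwind \cref{def:well-presented-type-theory} and verify the four clauses of acceptability (\cref{def:theory-good-properties}) followed by the two clauses of well-foundedness (\cref{def:well-founded-theory}), with essentially all the analytic content supplied by the well-formedness data already packaged into a well-presented theory. Write $T^\flat$ for the flattening; its rules are the realisations $\plug{R_i}{\symS_i}$ of the object rule-boundaries $R_i = (\RuleBoundary{P_i}{B_i})$ (for object-form $i$), the realisations of the equality rule-boundaries, and the associated congruence rules $\congrule{R_i}$ of the object rules. Substitutivity is immediate, since every conclusion boundary $B_i$ has empty context by \cref{def:well-presented-rule-boundary} and both realisation (\cref{def:realisation-well-presented-rule}) and the congruence construction (\cref{def:congruence-rule}) preserve empty conclusion contexts. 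Congruousness holds by construction, as $T^\flat$ was defined to contain $\congrule{R_i}$ for each object rule. For tightness, the realisation of an object rule-boundary for $\symS_i$ fills the head with $\genapp{\symS_i}$, so it is a symbol rule for $\symS_i$ and in particular tight; equality realisations have no head and are tight by construction. Since congruence and equality rules have equality conclusions, the object rules of $T^\flat$ are exactly the object realisations, so $\beta : \symS_i \mapsto \plug{R_i}{\symS_i}$ is the required bijection from symbols of $\Sigma_T$ to object rules. Tightness of the individual congruence rules follows from the proposition that the congruence rule associated to an acceptable object rule is acceptable.

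Presuppositivity of the object and equality realisations is where the well-formedness data is consumed. For each $i$, well-formedness of the premise-family $P_i$ (\cref{def:well-presented-premise-family}) supplies derivations of the presuppositions of each premise from the preceding premises, and well-formedness of the rule-boundary supplies derivations of the presuppositions of $B_i$ from $P_i$; together these are exactly presuppositivity in the sense of \cref{def:weakly-presuppositive-rule}(2). The presuppositivity of the congruence rules is again given by the proposition that the congruence rule of an acceptable object rule is acceptable (which applies, as we have just shown each $R_i$ to be tight and presuppositive). The one subtlety is that these derivations live over $T_{<i}^\flat$ and its metavariable extensions, whereas presuppositivity is required over $T^\flat$; one transports them along the signature inclusions $\Sigma_{T_{<i}} \to \Sigma_T$ using \cref{cor:derivations-functorial-signature-maps}, noting that the translation of $T_{<i}^\flat$ is a subfamily of $T^\flat$, so every rule invoked remains available.

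For well-foundedness, the index set of $T^\flat$ is $I + I_{\ob}$, the realisations together with the congruence rules. I would order it by refining $<$ on the $I$-copy and inserting each congruence index $\inr(i)$ immediately above its object index $\inl(i)$: $\inl(j) < \inl(i)$ and $\inr(j) < \inr(i)$ exactly when $j < i$, $\inl(j) < \inr(i)$ when $j \le i$, and $\inr(j) < \inl(i)$ when $j < i$. This is well-founded, being a rank-preserving refinement of the well-founded order on $I$. The two conditions of \cref{def:well-founded-theory} then follow from the stratified construction: a realisation $R_i$ is written over $\Sigma_{T_{<i}}$, so every symbol $\symS_k$ occurring in it has $k < i$, whence $\beta(\symS_k) = \inl(k) < \inl(i)$, and its well-formedness derivations live over $T_{<i}^\flat$, using only rules indexed below $\inl(i)$. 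A congruence rule $\congrule{R_i}$ additionally mentions $\symS_i$ and appeals to $R_i$ itself for the presupposition of its conclusion, but $\inl(i) < \inr(i)$, so both conditions still hold. Finally, every rule of $T^\flat$ is itself a well-founded rule: the realisations are so by construction, and each $\congrule{R_i}$ is well-founded for the evident order on its premise index set $I + I + I_{\ob}$, with the two object copies ordered as in $S_i$ and each associated-equality premise placed above the two object premises it refers to.

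The main obstacle I anticipate is the bookkeeping around the congruence rules: checking that each $\congrule{R_i}$ is itself a well-founded rule for a correctly chosen order on its tripartite premise family, and that inserting the congruence indices into the order on $I$ preserves well-foundedness and both dependency conditions simultaneously. The transport of well-formedness derivations from the initial-segment theories $T_{<i}^\flat$ up to $T^\flat$ is conceptually routine but must be stated with care, since it relies on the compatibility of the signature maps and subfamily inclusions assembled in \cref{def:well-presented-type-theory}.
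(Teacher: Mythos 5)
Your proposal is correct and follows essentially the same route as the paper, whose own proof is just the two-sentence observation that well-foundedness, tightness, substitutivity, and congruousness are immediate by construction, and that presuppositivity needs only the translation of the well-formedness derivations from the initial-segment signatures and theories $\Sigma_{T_{<i}}$, $T_{<i}^\flat$ up to the full $\Sigma_T$ and $T^\flat$. You have simply spelled out the details (the bijection $\beta$, the order on $I + I_{\ob}$, the appeal to acceptability of associated congruence rules) that the paper elides.
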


\begin{proof}
  Well-foundedness, tightness, substitutivity, and congruousness are immediate by construction.
  Presuppositivity is almost as direct, requiring just translation of well-formedness of rules from the signatures $\Sigma_{T_{<i}}$ and theories $T_{<i}$ up to the full signature $\Sigma$ and theory~$T$.
\end{proof}

\subsection{The well-founded replacement}
\label{sec:well-founded-replacement}

\Cref{def:theory-good-properties} of acceptable type theories allows cyclic references of three kinds: between types of a context, premises of a rule, or rules of a type theory.
We shall not concern ourselves with the former two, since type theories occurring in practice all avoid them by using sequential contexts and sequential rules from \cref{sec:sequential-contexts,sec:sequential-rules}. We address the latter one, to
vindicate our design choices from earlier sections, to demonstrate that our setup supports non-trivial meta-theoretic methods, and to give an interesting new construction that likely has further applications.

For the remainder of this section, all contexts, raw rules, and rule-boundaries are presumed to be sequential. Also, it will be convenient to speak of a raw theory~$T$ without explicitly displaying its underlying signature. When we need to refer to the signature, we do so by writing $\Sigma_T$.

In \cref{ex:type-in-type} an acceptable type theory was rectified to a well-founded one by the introduction of a new symbol and an equation. When one looks at other specific examples the same strategy works, possibly with the introduction of several symbols and equations.
In order to present a general method we first lay some category-theoretic groundwork. We save the adventure of spiralling into the depths of category theory for another day, and instead establish just enough structure to keep the syntactic constructions organized.

\begin{definition}
  \label{def:raw-syntax-map}%
  A \defemph{raw syntax map} $f : \Sigma \to \Sigma'$ is given by a family of expressions
  $f_S \in \Expr{\class{S}}{\mvextend{\Sigma'}{\args{S}}}{\emptyscope}$, one for each
  $S \in \Sigma$.
  Such a map acts on $e \in \Expr{c}{\Sigma}{\gamma}$ to give $\act{f} e \in \Expr{c}{\Sigma'}{\gamma}$ by
  \begin{equation}
    \label{eq:raw-syntax-map}%
    \act{f} (\synvar{i}) \defeq \synvar{i}
    \qquad\text{and}\qquad
    \act{f} (S(e)) \defeq \act{(\act{f} \circ e)} f_S.
  \end{equation}
  The \defemph{metavariable extension} $\mvextend{f}{\alpha} : \mvextend{\Sigma}{\alpha} \to \mvextend{\Sigma'}{\alpha}$ by arity~$\alpha$ is the raw syntax map defined by
  \begin{equation*}
    (\mvextend{f}{\alpha})_S \defeq f_S
    \qquad\text{and}\qquad
    (\mvextend{f}{\alpha})_{\synmeta{i}} \defeq 
    \synmeta{i}(\fammap{\synvar{j}}{j \in \argbinder{\alpha}{i}})
  \end{equation*}
\end{definition}

In words, a raw syntax map $\Sigma \to \Sigma'$ interprets each symbol in~$\Sigma$ as a suitable compound expression over~$\Sigma'$, the interpretation extends compositionally to all expressions over~$\Sigma$, and metavariable extensions act on such a map by extending it trivially.

Let us unravel the second clause in~\eqref{eq:raw-syntax-map}, as it is a bit terse. Given a symbol~$S \in \Sigma$ and its arguments
$e \in
   \prod_{i \in \args S} 
   \Expr{\argclass{S}{i}}{\Sigma}{\sumscope{\gamma}{\argbinder{S}{i}}}
$,
the composition $\act{f} \circ e$ takes each $i \in \args{S}$ to
$\act{f} e_i \in 
   \Expr{\argclass{S}{i}}{\Sigma'}{\sumscope{\gamma}{\argbinder{S}{i}}}
$ -- it is an instantiation of arity $\args{S}$, which thus acts on~$f_S$ to yield an expression $\act{(\act{f} \circ e)} f_S \in \Expr{\class{S}}{\Sigma'}{\gamma}$, where we took into account that $\sumscope{\gamma}{\emptyscope} = \gamma$.

The action of a raw syntax map evidently extends from expressions to context, judgements,
and boundaries, and thanks to the metavariable extensions also to raw rules and rule-boundaries.

\begin{proposition}
  Signatures and raw syntax maps form a category:
  \begin{itemize}

  \item
    The identity morphism $\idmap[\Sigma] : \Sigma \to \Sigma$ takes $S \in \Sigma$ to the generic application~$\genapp{S}$.

  \item
    The composition of $f : \Sigma \to \Sigma'$ and $g : \Sigma' \to \Sigma''$ is the
    map $g \circ f : \Sigma \to \Sigma''$ that interprets each $S \in \Sigma$ as $(g \circ f)_S \defeq \act{g} f_S$.
  \end{itemize}
  Raw syntax map actions are functorial.
\end{proposition}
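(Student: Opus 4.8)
The plan is to reduce every category axiom to a single functoriality statement about the action on expressions, namely
\[
\act{\idmap}e = e \qquad\text{and}\qquad \act{(g \circ f)}e = \act{g}(\act{f}e)
\]
for all raw syntax maps $f : \Sigma \to \Sigma'$, $g : \Sigma' \to \Sigma''$ and all expressions $e$, both proved by structural induction on $e$ (the variable case being immediate, since every raw syntax map fixes variables). Once these hold, the category laws drop out by evaluating on a symbol $S$, reading the action on $f_S$ over the metavariable extension — so that the definitional $(g\circ f)_S \defeq \act{g}f_S$ means $\act{(\mvextend{g}{\args{S}})}f_S$. Associativity is then $((h\circ g)\circ f)_S = \act{(h\circ g)}f_S = \act{h}(\act{g}f_S) = (h\circ(g\circ f))_S$, by the composition law applied to the expression $f_S$; the left unit law $(\idmap\circ f)_S = \act{\idmap}f_S = f_S$ is the identity law; and the right unit law reduces to the small computation $\act{f}(\genapp{S}) = f_S$, which unfolds the action on a symbol application and observes that the instantiation sending each $\synmeta{i}$ to itself (applied to its bound variables) acts as the identity.

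For the identity law itself, the symbol case unfolds to $\act{\idmap}(S(e)) = \act{(\act{\idmap}\circ e)}\genapp{S}$; the inductive hypothesis gives $\act{\idmap}\circ e = e$ as an instantiation, and evaluating the instantiation action on the generic application $\genapp{S}$ replaces each $\synmeta{i}$ by $e_i$, returning $S(e)$.

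The crux is the composition law, and with it an auxiliary \emph{interchange lemma} generalising the signature-map naturality of \cref{prop:instantiation-boilerplate}(2) to raw syntax maps: for $g : \Sigma' \to \Sigma''$, an instantiation $I \in \Inst{\Sigma'}{\gamma}{\alpha}$, and $h$ an expression over $\mvextend{\Sigma'}{\alpha}$,
\[
\act{g}(\act{I}h) = \act{(\act{g}I)}(\act{(\mvextend{g}{\alpha})}h),
\]
where $(\act{g}I)_i \defeq \act{g}(I_i)$. Granting this, the symbol case of the composition law is a direct computation: unfold $\act{(g\circ f)}(S(e)) = \act{(\act{(g\circ f)}\circ e)}((g\circ f)_S)$, rewrite $(g\circ f)_S = \act{(\mvextend{g}{\args{S}})}f_S$ and $\act{(g\circ f)}\circ e = \act{g}\circ(\act{f}\circ e)$ (the latter being the inductive hypothesis applied to each $e_i$), and apply the interchange lemma with $I = \act{f}\circ e$ and $h = f_S$ to recognise the result as $\act{g}(\act{f}(S(e)))$.

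I expect the interchange lemma to be the main obstacle. Its proof is again a structural induction on $h$, but because the raw syntax map action $\act{g}$ is itself defined through instantiation, the inductive cases must be pushed through using the full complement of boilerplate: the naturality of the instantiation action with respect to substitutions (\cref{prop:instantiation-boilerplate}(4)) at the metavariable case $h = \synmeta{i}(e')$, and the associativity of the instantiation action (\cref{prop:instantiation-boilerplate}(5)), again with part (4), at the symbol case $h = S'(e')$. This also presupposes a small family of commutation lemmas for raw syntax maps — with renamings, substitutions, and metavariable extensions — mirroring those already established for signature maps and proved by the same routine inductions. No genuinely new idea beyond \cref{prop:instantiation-boilerplate} is needed; the only real labour is the careful bookkeeping of the extensions $\mvextend{g}{\alpha}$ and of the canonical scope isomorphisms, which is unsurprising given the remark that these maps are the Kleisli maps of the scope-graded monad on signatures.
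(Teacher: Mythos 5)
Your proposal is correct and follows essentially the same route as the paper, whose entire proof is the one-line remark that the result is ``a straightforward application of the basic properties of instantiations'' --- i.e.\ exactly the boilerplate of \cref{prop:instantiation-boilerplate} that you invoke. Your elaboration (reducing the category laws to functoriality of the action on expressions, with the interchange lemma generalising \cref{prop:instantiation-boilerplate}(2) to raw syntax maps as the only substantive step) is a faithful and correctly identified unpacking of what the paper leaves implicit.
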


\begin{proof}
  A straightforward application of the basic properties of instantiations.
\end{proof}

Given raw theories $T$ and $T'$, a raw syntax map $f : \Sigma_T \to \Sigma'_T$ between the underlying signatures may be entirely unrelated to~$T$ and~$T'$. Requiring it to map derivable judgements in~$T$ to derivable judgements in~$T'$ helps, but ignores the fact that raw theories are families of raw rules, not derivable judgements. Here is a better definition.

\begin{definition}
  A \defemph{raw theory map} $f : T \to T'$ is a raw syntax map $f : \Sigma_T \to \Sigma'_T$ on the underlying signatures which maps each specific rule~$R$ of~$T$ to a derivation of $\act{f} R$ in~$T'$.
\end{definition}

\begin{proposition}
  Raw theories and raw theory maps form a category $\RawTh$.
  A raw theory map $f : T \to T'$ acts functorially on a derivation $D$ of $\Gamma \typesjudgement J$ in~$T$ to give a derivation $\act{f} D$ of $\act{f} \Gamma \typesjudgement \act{f} J$ in $T'$.
\end{proposition}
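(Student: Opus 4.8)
The plan is to realise every raw theory map as a map of closure systems in the sense of \cref{def:derivation-grafting}, and then to read off both the action on derivations and the category laws from the grafting machinery already in place. Given $f : T \to T'$, I first produce a map of closure systems $\clos T \to \clos{T'}$ over $\act f : \Judg{\Sigma_T} \to \Judg{\Sigma_{T'}}$, i.e.\ a derivation over $\clos{T'}$ of the $\act f$-image of each closure rule of $\clos T$. Since $\clos T = \StructuralRules{\Sigma_T} + \coprod_{R \in T} \clos R$ by \cref{def:closure-system-of-type-theory}, there are two kinds of closure rule to treat, and the whole construction rests on the naturality of the raw-syntax-map action with respect to substitution and instantiation --- the raw-syntax-map analogues of \cref{prop:instantiation-boilerplate}, which I take as routine structural inductions parallel to the signature-map case.

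For the specific rules, a closure rule of $\coprod_{R\in T}\clos R$ is an instance $\act{(I,\Gamma)} R$ with $R \in T$. Naturality of $\act f$ with respect to instantiation gives $\act f(\act{(I,\Gamma)} R) = \act{(\act f I, \act f\Gamma)}(\act f R)$, where $\act f R$ is the translation of $R$ along $\mvextend{f}{\arity R}$ and has the same arity $\arity R$. Because $f$ is a raw theory map, it supplies a derivation $D_R$ of $\act f R$ in $T'$, that is, a derivation of $\conclusion(\act f R)$ from $\premises(\act f R)$ over $\mvextend{T'}{\arity R}$ (\cref{def:derivable-raw-rule}). Applying \cref{cor:instantiation-of-derivations} with the instantiation $\act f I$ in context $\act f \Gamma$ then turns $D_R$ into a derivation of $\act f(\conclusion \act{(I,\Gamma)} R)$ from $\act f(\premises \act{(I,\Gamma)} R)$ over $\clos{T'}$, as required.

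For the structural rules I check that $\act f$ carries each instance to an instance of a structural rule of the same shape over $\Sigma_{T'}$, which is then derivable in one step. This uses only that $\act f$ fixes variables ($\act f\synvar{i} = \synvar{i}$, by \cref{def:raw-syntax-map}), commutes with the substitution action, and commutes with instantiation: the variable rule for $(\Gamma,i)$ maps to the variable rule for $(\act f\Gamma, i)$; the equivalence and conversion rules, being raw rules built only from metavariable symbols, are fixed by $\mvextend{f}{\arity R}$ and so their instances map to their own instances over $\Sigma_{T'}$; and a substitution or equality-substitution rule for $f'\colon\Delta\to\Gamma$ acting trivially on $K$ maps to the corresponding rule for $\act f f'\colon\act f\Delta\to\act f\Gamma$ acting trivially on the same $K$, since the trivial-action condition is preserved by the naturality identities. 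Granting the closure-system map, the action on a (closed) derivation $D$ of $\Gamma\typesjudgement J$ in $T$ is its image $\act f D$ under this map, a derivation of $\act f\Gamma\typesjudgement \act f J$ in $T'$, by the action-on-derivations construction established after \cref{lem:hypotheses-grafting}.

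Finally, for the category structure: the identity on $T$ is $\idmap[\Sigma_T]$ equipped, for each $R\in T$, with the canonical one-step derivation of $R = \act{\idmap}R$ in $T$ (a single application of $R$ to its premises); composition $g\circ f$ has underlying syntax map $g\circ f$ from the preceding proposition and, for each $R$, the derivation
\[
  \act{(\mvextend{g}{\arity R})}(D_R) \quad\text{of}\quad \act{(g\circ f)}R = \act g(\act f R) \text{ in } T'',
\]
obtained by lifting $g$ to a raw theory map $\mvextend{g}{\arity R}\colon \mvextend{T'}{\arity R}\to\mvextend{T''}{\arity R}$ (using \cref{cor:derivations-functorial-signature-maps} to translate $D_{R'}$ up the metavariable extension) and acting on $D_R$. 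Associativity and unitality reduce, on underlying syntax maps, to the preceding proposition, and on the derivation components to the functoriality of the action on derivations, $\act{\idmap}D = D$ and $\act{(g\circ f)}D = \act g(\act f D)$; this is exactly the functoriality asserted in the statement, and it follows --- as for simple maps in \cref{prop:derivations-functorial-simple-maps} --- from the Kleisli/grafting structure noted after \cref{lem:hypotheses-grafting}. The main obstacle is bookkeeping rather than conceptual: isolating and proving the naturality of $\act f$ with respect to substitution and instantiation, on which both the treatment of the structural rules and the well-definedness of composite derivations depend; once these identities are in hand, every remaining step is formal.
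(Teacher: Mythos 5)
Your proposal is correct and takes essentially the same approach as the paper: the paper defines $\act{f}D$ by direct recursion on $D$ (structural instances go to the corresponding structural instances; a specific instance $\act{(I,\Gamma)}R$ goes to the supplied derivation $D_R$ instantiated by $\act{f}I$, with the recursively translated premise derivations grafted in), which is exactly what your closure-system map unfolds to via the grafting machinery, and it likewise defines the composite's derivation component as $\act{g}D_R$. Your write-up is marginally more explicit than the paper's about the naturality of the raw-syntax-map action with respect to substitution and instantiation, and about the need to lift $g$ to the metavariable extension $\mvextend{T'}{\arity{R}}$ before acting on $D_R$ --- details the paper leaves implicit.
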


\begin{proof}
  Let us first describe the action of $f$ on a derivation $D$ of $\Gamma \types J$.
  We proceed by recursion on the structure of~$D$.

  Structural rules are mapped to the corresponding structural rules, e.g., if $D$ concludes with a variable rule $\isterm{\Gamma}{\synvar{i}}{\Gamma_i}$ then $\act{f} D$ concludes with the variable rule $\isterm{\act{f} \Gamma}{\synvar{i}}{\act{f} \Gamma_i}$, and similarly for other structural rules.

  Consider the case where $D$ ends with an instantiation $\act{I} R$ of a specific rule~$R$ of~$T$, whose premises are $\fammap{\Gamma_k \types J_k}{k \in K}$. Suppose~$f$ takes $R$ to the derivation $D_R$ of $\act{f} R$ in~$T'$.
  First we recursively map each derivation $D_k$ of the $k$-th premise $\act{I} \Gamma_k \types \act{I} J_k$ to a derivation $\act{f} D_k$ of $\act{f} (\act{I} \Gamma_k \types \act{I} J_k)$ in~$T'$, which is the same as $\act{(\act{f} I)} \Gamma_k \types \act{(\act{f} I)} J_k$, because acting by~$I$ and then by~$f$ is the same as acting by the instantiation $\act{f} I$.
  We then take $\act{f} D$ to be the derivation $\act{(\act{f} I)} R_D$ with the derivations $\act{f} D_k$ of its hypotheses grafted onto it, as in \cref{def:derivation-grafting}.

  It is straightforward to verify that the action on derivations so constructed satisfies functoriality, $\act{(g \circ f)} D = \act{g} (\act{f} D)$.

  The categorical structure of $\RawTh$ is inherited from the structure of raw syntax maps. Additionally, given composable raw theory maps $f$ and $g$, we let their composition $g \circ f$ take a specific rule $R$ to the derivation $\act{g} D_R$, where $D_R$ is the derivation of $\act{f} R$ provided by~$f$.
\end{proof}

It may happen that a raw theory map $f : T \to T'$ maps an uninhabited type to an inhabited one, or an underivable equality to a derivable one.
Let us make precise the sense in which such a map fails to be conservative,
and at the same time generalise inhabitation of types to general completion of boundaries in the presence of premises.

\begin{definition}
  Given a raw theory $T$,
  and an object rule-boundary $\RuleBoundary{P}{B}$ over~$\Sigma_T$ whose syntactic class is~$c_B$, say that $e \in \Expr{c_B}{\mvextend{\Sigma_T}{\arity{P}}}{\emptyscope}$ \defemph{realises} the rule-boundary when $\RuleBoundary{P}{\plug{B}{e}}$ is derivable in~$T$.
\end{definition}

\begin{example}
  Inhabitation of a closed type~$A$ corresponds to realisation of the rule-boundary
  $\RuleBoundary{\emptyfam}{\bdryhead : A}$.
  We can also express more general inhabitation tasks, for instance
  $\RuleBoundary{(\istype{}{\symA})}{\bdryhead \type}$ asks for a construction of a type from a type parameter~$\symA$, and $\RuleBoundary{(\istype{}{\symA}); (\istype{[\synvar{0} \of A]}{\symB(\synvar{0})})}{\bdryhead \type}$ for a $\symPi$-like higher type constructor.
\end{example}

\begin{definition}
  A raw theory map $f : T \to T'$ is \defemph{conservative} when:
  \begin{enumerate}
  \item $f$ reflects equations: if $T'$ derives the equational rule $\act{f} R$ then $T$ derives the equational rule~$R$, and

  \item $f$ reflects realisers: there is a map $r$ such that if $e$ realises the object rule-boundary $\act{f} (\RuleBoundary{P}{B})$ in~$T'$ then $r(R, e)$ realises $\RuleBoundary{P}{B}$ in~$T$.
  \end{enumerate}
\end{definition}

In the definition, we asked for a map $r$ to witness reflection of realisers in order to avoid spurious applications of the axiom of choice.

\begin{lemma}
  \label{lem:map-factor-conservative}
  Every raw theory map $f : T \to U$ factors through a
  conservative map $f^\dagger : T^\dagger \to U$
  \begin{equation*}
    \xymatrix{
      {T} \ar[rr]^{f} \ar[rd] & & {U} \\
      & {T^\dagger} \ar[ur]_{f^\dagger} &
    }
  \end{equation*}
  in a weakly universal fashion.
\end{lemma}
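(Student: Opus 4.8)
The plan is to build $T^\dagger$ as a conservative image of $T$ inside $U$, formed by freely adjoining to $T$ a new symbol for every realisation that $U$ supplies and every equation that $U$ validates. Concretely, I would generate the signature $\Sigma_{T^\dagger}$ together with its raw syntax map $f^\dagger$ by a single well-founded (inductive-recursive) definition: every symbol $S$ of $\Sigma_T$ is a symbol of $\Sigma_{T^\dagger}$ with $f^\dagger_S \defeq f_S$; and for every object rule-boundary $\rho = (\RuleBoundary{P}{B})$ over $\Sigma_{T^\dagger}$ built from previously generated symbols, and every $e$ realising $\act{f^\dagger}\rho$ in $U$, there is a fresh symbol $\symS_{\rho,e}$ of arity $\arity{P}$ and class $c_B$, with $f^\dagger(\symS_{\rho,e}) \defeq e$. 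The generation is well-founded because each new symbol refers only to earlier symbols through the finite sequential data of $\rho$, so $f^\dagger$ is determined unambiguously; a size bound on $\Sigma_{T^\dagger}$ comes from the fixed $U$ and the finitary sequential data.

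The theory $T^\dagger$ over $\Sigma_{T^\dagger}$ then comprises (i) all specific rules of $T$; (ii) for each fresh symbol, its symbol rule $\RuleBoundary{P}{\plug{B}{\genapp{\symS_{\rho,e}}}}$; and (iii) every equational rule $R$ over $\Sigma_{T^\dagger}$ with $\act{f^\dagger}R$ derivable in $U$. The map $T \to T^\dagger$ is the signature inclusion, sending each rule of $T$ to itself, hence a raw theory map. For $f^\dagger : T^\dagger \to U$ I must send each rule to a derivation of its $f^\dagger$-translate in $U$: type (i) rules are covered by the given map $f$, type (iii) by definition, and for type (ii) a short computation with the action on generic applications (\cref{def:raw-syntax-map}) gives $\act{f^\dagger}(\genapp{\symS_{\rho,e}}) = e$, so the translate of the symbol rule is $\act{f^\dagger}(\RuleBoundary{P}{\plug{B}{e}})$, derivable in $U$ precisely because $e$ realises $\act{f^\dagger}\rho$. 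Since $f^\dagger$ restricts to $f$ on $\Sigma_T$, the composite recovers $f$.

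Conservativity of $f^\dagger$ is essentially built in. It reflects equations because any equational $R$ over $\Sigma_{T^\dagger}$ whose image is derivable in $U$ was explicitly added in clause (iii), hence is derivable in $T^\dagger$; and it reflects realisers through the explicit map $r(\rho,e) \defeq \genapp{\symS_{\rho,e}}$, whose associated symbol rule (ii) makes it realise $\rho$ in $T^\dagger$, so no appeal to choice is needed. For weak universality I would take a competing factorisation $T \xrightarrow{u} T' \xrightarrow{g} U$ with $g$ conservative, and define a comparison $h : T^\dagger \to T'$ by recursion on the generation of $\Sigma_{T^\dagger}$: set $h \defeq u$ on $\Sigma_T$, and $h(\symS_{\rho,e}) \defeq r_g(\act{h}\rho, e)$ on fresh symbols, where $r_g$ is the realiser-reflection map furnished by conservativity of $g$ and $\act{h}\rho$ is the already-defined translate of $\rho$ to $\Sigma_{T'}$. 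The freedom in $r_g$ and in the choice of $e$ is exactly why the comparison is not unique, giving only a \emph{weakly} universal factorisation.

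The main obstacle is verifying this comparison $h$, and in particular the interaction between the freely adjoined equations of clause (iii) and the non-uniqueness of realisers. The outer triangle $h \circ (T \to T^\dagger) = u$ commutes on the nose, but $g \circ h$ agrees with $f^\dagger$ only up to the choices made by $r_g$: distinct expressions may realise the same boundary, so $g \circ h$ and $f^\dagger$ can interpret a fresh symbol by genuinely different (even non-equal) expressions. Checking that $h$ is nonetheless a bona fide raw theory map on clause-(iii) rules, and pinning down the precise weak sense in which the lower triangle commutes, is where the real care lies; I expect to resolve it by using conservativity of $g$ together with the equivalence and congruence structure of $U$ to transport derivability of the adjoined equations across these weakly commuting interpretations, and by formulating weak universality so as to be robust against the inherent non-uniqueness.
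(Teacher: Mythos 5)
Your construction is essentially identical to the paper's: the same inductive adjunction of a fresh symbol $\symb{c}_{(\RuleBoundary{P}{B},e)}$ for each realiser $e$ of a translated object rule-boundary together with its symbol rule, the same adjunction of equational rules whose translates become derivable, conservativity by construction with the realiser-reflection witness $\genapp{\symb{c}}_{(R,e)}$, and the same recursive definition of the comparison map $h$ via the realiser-reflection map of the competing conservative $g$. The subtlety you flag in your last paragraph --- that $g \circ h$ and $f^\dagger$ need only agree up to the choices made by $r_g$, so the lower triangle commutes only in a weak sense --- is real, and the paper's own proof passes over it in silence, so your proposal is if anything more candid on this point.
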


In the lemma, by weak universality we mean that whenever $f$ factors through a conservative map $g : V \to U$, there is a map $h : T^\dagger \to V$, not necessarily unique, such that the following diagram commutes:
\begin{equation}
  \label{eq:weakly-universal-ddagger}
  \xymatrix{
    {T} \ar[rr]^f \ar[rd] \ar@/_4ex/[rdd]_j & & {U} \\
    & {T^\dagger} \ar[ur]_{f^\dagger} \ar[d]^h & \\
    & {V} \ar@/_4ex/[ruu]_g &
  }
\end{equation}

\begin{proof}[Proof of \cref{lem:map-factor-conservative}]
  We shall construct $T^\dagger$ by adjoining new symbols to~$T$, so that whenever $e$ realises $\act{f} R$ in~$U$, the corresponding new symbol realises~$R$ in~$T$. However, the new symbols generate new rule-boundaries, so the process needs to be iterated, and we have to adjoin equations as well. The construction of~$f^\dagger : T^\dagger \to U$ thus proceeds in an inductive fashion, as follows.

  Initially the signature $\Sigma_{T^\dagger}$ is just $\Sigma_T$, the specific rules of $T^\dagger$ are those of~$T$, and~$f^\dagger$ acts like~$f$.
  We inductively extend $\Sigma_{T^\dagger}$ with new symbols, $T^\dagger$ with new specific rules, and~$f^\dagger$ with new values, as follows:
  \begin{enumerate}

  \item
    If $\RuleBoundary{P}{B}$ is an object rule-boundary in $T^\dagger$ of arity~$\alpha$ and $e$ realises $\RuleBoundary{\act{f^\dagger} P}{\act{f^\dagger} B}$ in $U$, then we extend $\Sigma_{T^\dagger}$ with a \defemph{new symbol} $\symb{c}_{(\RuleBoundary{P}{B},e)}$ of arity $\alpha$, and $T^\dagger$ with the associated symbol rule
    $
     \SequentialRule
       {P}
       {\plug{B}{\genapp{\symb{c}}_{(\RuleBoundary{P}{B},e)}}}
    $.
    We extend $f^\dagger$ by letting it map~$\symb{c}_{(\RuleBoundary{P}{B},e)}$ to~$e$.

  \item
    If $R$ is an equational rule in $T^\dagger$ such that $\act{f^\dagger} R$ is derivable in~$U$, we extend~$T^\dagger$ with~$R$ as a specific rule.
  \end{enumerate}
  Because we assumed all contexts and premise-families to be sequential, the inductive definition is complete after countably many repetitions of the above process. Alternatively, $T^\dagger$ could be constructed as a suitable colimit.

  The map $f$ obviously factors as $f = f^\dagger \circ i$ where $i : T \to T^\dagger$ is induced by the inclusion $\Sigma_T \to \Sigma_{T^\dagger}$.

  The map $f^\dagger$ is conservative by construction. It obviously reflects equations, while an object rule-boundary~$R$
  in~$T^\dagger$, such that $e$ realises $\act{f^\dagger} R$ in~$U$, is realised by $\symb{c}_{(R, e)}$.

  It remains to be shown that the factorization is weakly universal. Consider a factorization $f = g \circ j$ through a conservative map $g : V \to U$, as in~\eqref{eq:weakly-universal-ddagger}. There exists a map~$r$, not necessarily unique, that witnesses conservativity of~$g$. The desired factor $h : T^\dagger \to V$ is defined inductively: it acts like~$j$ on symbols of~$\Sigma_T$, and takes $\symb{c}_{(R,e)}$ to $r(\act{h} R, e)$.
\end{proof}

\begin{corollary}
  \label{cor:wf-replacement}%
  A raw theory $T$ has a weakly universal conservative map $t : \wfreplace{T} \to T$
  where $\wfreplace{T}$ is well-founded.
\end{corollary}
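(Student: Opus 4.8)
The plan is to obtain $\wfreplace{T}$ by applying \cref{lem:map-factor-conservative} not to the identity on $T$ but to the unique raw theory map $u \colon \mathbb{O} \to T$ out of the empty raw theory $\mathbb{O}$ (empty signature, no specific rules). Such a map exists and is unique, since a raw syntax map out of the empty signature is the empty family of expressions and there are no specific rules whose derivations must be supplied. Setting $\wfreplace{T} \defeq \mathbb{O}^\dagger$ and $t \defeq u^\dagger$, the lemma immediately delivers a conservative map $t \colon \wfreplace{T} \to T$ together with the required weak universality: because every map $\mathbb{O} \to V$ exists and is unique, the factorisation condition $u = g \circ j$ holds automatically for any conservative $g \colon V \to T$, so the weak universality expressed by \eqref{eq:weakly-universal-ddagger} specialises to say that $t$ factors through every conservative map $g \colon V \to T$ --- which is exactly weak universality of $t$ among conservative maps into $T$.

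It is worth recording why factoring $\idmap[T]$ would \emph{not} work, as one might first expect. The construction in the proof of \cref{lem:map-factor-conservative} starts $T^\dagger$ off with all the specific rules of its source, so factoring $\idmap[T]$ would leave the original --- possibly circular --- rules of $T$ inside $\wfreplace{T}$, precisely the cycles we wish to remove (cf.\ \cref{ex:type-in-type}). Starting instead from $\mathbb{O}$ means every symbol and rule of $\wfreplace{T}$ is introduced afresh during the inductive construction, so no cyclic dependency can survive.

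The substance of the proof is therefore to check that $\mathbb{O}^\dagger$ is well-founded. Here I would read the inductive construction of \cref{lem:map-factor-conservative} as exhibiting $\mathbb{O}^\dagger$ as (the flattening of) a well-presented type theory in the sense of \cref{def:well-presented-type-theory}. Concretely: order the symbols and rules of $\mathbb{O}^\dagger$ by the stage at which they are adjoined, refining arbitrarily within a stage. This is a well-founded order, since each new symbol $\symb{c}_{(\RuleBoundary{P}{B},e)}$ together with its symbol rule, and each adjoined equational rule, is formed over the signature available at an earlier stage and hence mentions only previously-introduced symbols, giving clause~(1) of \cref{def:well-founded-theory}. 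Each such rule is the realisation of a well-presented rule-boundary $\RuleBoundary{P}{B}$ whose presuppositions are, by construction, derivable over the earlier initial segment, giving clause~(2). Packaging the adjoined object rule-boundaries together with their associated congruence rules --- placed at the same stage, as they mention the same symbols --- presents $\mathbb{O}^\dagger$ as a well-presented type theory, whence the proposition that the flattening of a well-presented type theory is acceptable and well-founded yields both the acceptability demanded by \cref{def:well-founded-theory} and well-foundedness itself.

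The main obstacle I expect is precisely this bookkeeping: verifying that the iterative construction of $\mathbb{O}^\dagger$ (completed, by sequentiality, after countably many stages) genuinely assembles into a single well-founded rule order meeting all clauses of \cref{def:well-founded-theory}. In particular, congruousness must be secured by adjoining the congruence rule alongside each new object symbol rule without disturbing the stratification, and the presupposition-derivations witnessing well-formedness of the adjoined equational rules must be shown to live over the appropriate \emph{earlier} initial segment rather than over the whole of $\mathbb{O}^\dagger$. Relocating those derivations to the correct initial segment is exactly the translation-along-signature-inclusions bookkeeping already handled in \cref{def:well-founded-premise-family}, so no new idea is required; but this is where the care lies, and it is the step I would spell out most carefully.
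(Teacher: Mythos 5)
Your proposal is correct and takes essentially the same route as the paper: the paper likewise defines $\wfreplace{T} \defeq \emptyfam^\dagger$ and $t \defeq o^\dagger$ for the unique map $o : \emptyfam \to T$ from the empty theory, notes that weak universality is immediate, and attributes well-foundedness to the inductive nature of the construction. Your additional bookkeeping on stratifying $\emptyfam^\dagger$ by construction stage merely spells out what the paper leaves implicit.
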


\begin{proof}
  We take $\wfreplace{T} = \emptyfam^\dagger$ and $t = o^\dagger$,
  as in \cref{lem:map-factor-conservative},
  where $o : \emptyfam \to T$ is the unique map from the empty theory~$\emptyfam$.
  Weak universality is immediate, and well-foundedness of $\emptyfam^\dagger$ is witnessed by the inductive nature of its construction.
\end{proof}

\begin{definition}
  \label{def:wf-replacement}%
  The map $t : \wfreplace{T} \to T$ from \cref{cor:wf-replacement} is called the \defemph{well-founded replacement of~$T$}.
\end{definition}

Here, finally is the main theorem of this section.

\begin{theorem}
  \label{thm:wf-replacement-equivalence}%
  If $T$ is acceptable, the map $t : \wfreplace{T} \to T$ has a section.
\end{theorem}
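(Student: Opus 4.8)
The plan is to construct a raw theory map $s : T \to \wfreplace{T}$ and check that $t \circ s = \idmap[T]$. Recall from \cref{cor:wf-replacement,lem:map-factor-conservative} that $\wfreplace{T} = \emptyfam^\dagger$ arises from the inductive $\dagger$-process applied to the unique map $o : \emptyfam \to T$: beginning from the empty signature, one repeatedly adjoins, for every object rule-boundary $\RuleBoundary{P}{B}$ expressible over the current stage together with a realiser $e$ of $\RuleBoundary{\act{t}P}{\act{t}B}$ in $T$, a fresh symbol $\symb{c}_{(\RuleBoundary{P}{B},e)}$ with its symbol rule $\SequentialRule{P}{\plug{B}{\genapp{\symb{c}}}}$, and sets $t(\symb{c}_{(\RuleBoundary{P}{B},e)}) \defeq e$; equational rules whose $t$-image is derivable in $T$ are adjoined similarly. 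Since $T$ is sequential and the process closes after countably many stages, this same staged induction is what I will piggyback on to build $s$.

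Acceptability is what makes the section possible. As $T$ is tight, each symbol $S \in \Sigma_T$ carries a unique symbol rule $\beta(S)$, whose conclusion completes an object rule-boundary $\RuleBoundary{P_S}{B_S}$ (of arity $\arity{S}$ and class $\class{S}$) with the generic application $\genapp{S}$; as $T$ is presuppositive, this rule-boundary is well-formed over $T$, and by definition $\genapp{S}$ realises it. I would then set, on symbols, $s_S \defeq \symb{c}_{(\act{s}(\RuleBoundary{P_S}{B_S}),\, \genapp{S})}$, the fresh symbol of $\wfreplace{T}$ realising the $s$-translate of $S$'s rule-boundary with realiser $\genapp{S}$. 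This is manifestly a fixed-point specification, and its legitimacy is secured by interleaving it with the $\dagger$-induction: proceeding stage by stage, one simultaneously defines $s$ on the symbols involved and transports the presuppositivity derivations of $\RuleBoundary{P_S}{B_S}$ from $T$ to $\wfreplace{T}$ along $s$, so that at each stage $\act{s}(\RuleBoundary{P_S}{B_S})$ is a genuine well-formed object rule-boundary over $\wfreplace{T}$, and its $t$-image $\RuleBoundary{\act{t}\act{s}P_S}{\act{t}\act{s}B_S}$ is realised in $T$ by $\genapp{S}$ — the latter holding because $t \circ s$ already coincides with $\idmap[T]$ on all symbols settled at earlier stages, whence $\act{t}\act{s}(\RuleBoundary{P_S}{B_S}) = \RuleBoundary{P_S}{B_S}$ and $\genapp{S}$ realises it by the rule $\beta(S)$.

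Being a raw syntax map, $s$ extends from symbols to contexts, judgements, boundaries, and rules. To see it is a raw theory map I would verify that it sends each specific rule of $T$ to a derivation of its $s$-image in $\wfreplace{T}$: a symbol rule $\beta(S)$ maps to the symbol rule of $s_S$, present by construction; an equational rule $R$ maps to $\act{s}R$, adjoined to $\wfreplace{T}$ by step~(2) of the $\dagger$-process since $\act{t}(\act{s}R) = R$ is derivable in $T$; and congruence rules map to the congruence rules of the adjoined symbols, which are included in the well-founded theory $\wfreplace{T}$. Finally $t \circ s = \idmap[T]$ on symbols, because $\act{t}(s_S) = t(\symb{c}_{(-,\genapp{S})}) = \genapp{S}$ by the definition of $t$ on adjoined symbols, and raw syntax maps are determined by their action on symbols; hence $s$ is the desired section.

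The hard part will be making the simultaneous fixed-point definition of $s$ precise and coherent. For an acyclic acceptable theory one could just recurse along the dependency order of the symbols, but for genuinely cyclic theories such as the type-in-type example of \cref{ex:type-in-type} no such order exists, so the circularity must be absorbed into the staged induction constructing $\wfreplace{T}$, carrying along both the partially defined $s$ and the transported well-formedness derivations. Checking that this mutual recursion closes up — in particular that the well-presented rule-boundaries $\act{s}(\RuleBoundary{P_S}{B_S})$ and the associated presuppositivity derivations transport correctly and remain available at the stage where $s_S$ is needed — is where the essential bookkeeping, and the real content of the argument, lies.
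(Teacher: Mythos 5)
Your overall shape is right --- define $s$ on each symbol $S$ as a $\symb{c}$-symbol of $\wfreplace{T}$ whose realiser component is $\genapp{S}$, use tightness to get the unique symbol rule per symbol, and use $t(\symb{c}_{(R,e)}) = e$ to conclude $t \circ s = \idmap[T]$ --- but the central difficulty is not resolved, and the mechanism you propose for resolving it does not work. You set $s_S \defeq \symb{c}_{(\act{s}(\RuleBoundary{P_S}{B_S}),\,\genapp{S})}$ and hope to ground this fixed point by interleaving it with the stages of the $\dagger$-construction. Those stages, however, are indexed by the growth of the signature of $\wfreplace{T}$ (each stage adjoins symbols for rule-boundaries expressible over the previous stage); they induce no order on the symbols of $\Sigma_T$, so the claim that ``$t \circ s$ already coincides with $\idmap[T]$ on all symbols settled at earlier stages'' has no base case for a cyclic theory. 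Concretely, in \cref{ex:type-in-type} you would need $s_{\symb{u}} = \symb{c}_{(\act{s}(\RuleBoundary{\emptyfam}{\bdryhead \,:\, \symb{El}(\symb{u})}),\,\genapp{\symb{u}})}$, whose index mentions $s_{\symb{El}}$ and $s_{\symb{u}}$ itself; since the symbols of $\wfreplace{T}$ are generated inductively, no symbol can occur in its own index, so this equation has no solution. Handling exactly such cyclic acceptable theories is the point of the theorem, so this is not deferred bookkeeping --- it is the content of the proof.

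The paper's resolution is a genuinely different device that your proposal lacks: an auxiliary translation $d$ that does \emph{not} act compositionally on expressions. Instead of pushing $s$ through the symbols of an expression $A$ occurring in a boundary slot, $d$ replaces $A$ wholesale by a single fresh symbol $\genapp{\symb{c}}_{(\ldots,\,\mvpromote{A})}$ whose \emph{realiser} is the original expression $A$ itself, promoted from variables to metavariables (and demoted back) so that open expressions can serve as closed realisers; the rule-boundary component of that index is built by recursion on the \emph{sequential structure} of the premise-family and context, never on the symbol-dependency graph of $\Sigma_T$. This is what makes the definition well-founded even when the symbols of $T$ refer to one another cyclically, and it is also what gives $\act{t}(d(R)) = R$ on the nose. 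The section is then $s_S \defeq \genapp{\symb{c}}_{((\RuleBoundary{d(P)}{\bdryhead \type}),\,\genapp{S})}$, with $d(P)$ in place of your $\act{s}(P_S)$ (and the analogous version, with a $\symb{c}$-symbol standing in for the conclusion type, for term symbols). Without something playing the role of $d$ and of promotion/demotion, your construction of $s$ cannot be carried out.
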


The theorem establishes a form of equivalence of $\wfreplace{T}$ and $T$, because
any conservative map $r : U \to V$ with a section $s : V \to U$ is an isomorphism up to judgmental equality. Indeed, $r \circ s = \idmap_V$ because~$s$ is a section of~$r$, while $s \circ r$ is identity up to judgemental equality: given any derivable rule $\SequentialRule{P}{A \type}$ in~$U$, the rule $\SequentialRule{\act{r} P}{\act{r}(\act{s}(\act{r} A)) \judgeq \act{r} A}$ is derivable in~$V$ by reflexivity, and hence $\SequentialRule{P}{\act{s}(\act{r} A) \judgeq A}$ in~$U$ by conservativity of~$r$. An analogous argument works for term judgements.

\begin{proof}[Proof of \cref{thm:wf-replacement-equivalence}]

  The theory~$\wfreplace{T}$ has symbols of the form $\symb{c}_{(\RuleBoundary{P}{B},e)}$, where~$B$ is a closed boundary, but in the proof we will have to deal with boundaries that refer to variables.

  For this purpose, define the \defemph{(variable-to-metavariable) promotion} of an expression $e \in \Expr{c}{\Sigma}{\gamma}$ to be the expression $\mvpromote{e} \in \Expr{c}{\mvextend{\Sigma}{\simplearity \gamma}}{\emptyscope}$, cf.\ \cref{def:simple-arity}, which is~$e$ with the variables replaced by metavariables,
  \begin{equation*}
    \mvpromote{\synvar{i}} \defeq \synmeta{i},
    \qquad\text{and}\qquad
    \mvpromote{S(e)} \defeq S(\fammap{\mvpromote{e_i}}{i \in \args{S}}).
  \end{equation*}
  The associated \defemph{demotion} is the instantiation $D_\gamma \in \Inst{\Sigma}{\gamma}{\simplearity \gamma}$ which takes the metavariables back to variables, $D_\gamma(\synmeta{i}) = \synvar{i}$. Thus we have $e = \act{{D_\gamma}}{\mvpromote{e}}$.

  One level up, given a premise-family~$P$ and a context $\Gamma$ over $\mvextend{\Sigma}{\arity{P}}$, the \defemph{promotion} of $\Gamma$ is the extension $P ; \mvpromote{\Gamma}$ of~$P$ in which the variables of~$\Gamma$ are promoted to metavariables of suitable types,
  \begin{equation*}
    (P ; \mvpromote{\emptycxt}) \defeq P
    \qquad\text{and}\qquad
    (P ; \mvpromote{\ctxextend{\Gamma}{A}}) \defeq
    (P ; \mvpromote{\Gamma}) ; (\istype{}{\mvpromote{A}}).
  \end{equation*}

  We begin the construction of a section of~$t$ by defining a map $d$ which maps sequential rules and rule-boundaries from~$T$ to~$\wfreplace{T}$ by replacing compound expressions $e$ with suitable symbols~$\symb{c}_{(R,e)}$ from~$\wfreplace{T}$.
  When acting on contexts and judgements, $d$ takes a sequential rule-family~$P$ from $T$ as an additional parameter, in which case we write~$d_P$.

  The map~$d$ recurses over a premise-family in~$T$ to give a premise-family in~$\wfreplace{T}$:
  \begin{align*}
    d (\emptyfam) \defeq \emptyfam
    \qquad\text{and}\qquad
    d (P ; (\Gamma \types J)) \defeq (d(P) ; d_{P}(\Gamma \types J)).
  \end{align*}
  Similarly, it takes a sequential context $\Gamma$ over $T + P$ to one over $\wfreplace{T} + d(P)$:
  \begin{align*}
    d_P(\emptycxt) &\defeq \emptycxt, \\
    d_P(\ctxextend{\Gamma}{A}) &\defeq
    \ctxextend
      {d_P(\Gamma)}
      {(\act{{D_{\position{\Gamma}}}}
        \genapp{\symb{c}}_{((\RuleBoundary{d(P) ; \mvpromote{d_P(\Gamma)}}{\bdryhead \type}), \mvpromote{A})}
      )}.
  \end{align*}
  In the second clause, $d_P$ recurses into~$\Gamma$ and extends it with the rather intimidating
  \begin{equation*}
    \act{{D_{\position{\Gamma}}}}
        \genapp{\symb{c}}_{((\RuleBoundary{d(P) ; \mvpromote{d_P(\Gamma)}}{\bdryhead \type}), \mvpromote{A})},
  \end{equation*}
  which is just the generic application of the $\symb{c}$-symbol for the promoted~$\mvpromote{A}$, demoted back to~$\Gamma$.
  Note that $(d(P) ; \mvpromote{d_P(\Gamma)}) = d(P ; \mvpromote{\Gamma})$ and hence $\act{t} (d_P(\ctxextend{\Gamma}{A})) = \ctxextend{\Gamma}{A}$.

  It remains to explain how~$d_P$ maps a judgement $\Gamma \types J$ over $T + P$ to one over $\wfreplace{T} + d(P)$. Here too we use the same method of demoting a generic application of a symbol for a promoted expression:
  \begin{align*}
    d_P (\istype{\Gamma}{A}) &\defeq (\istype{\Gamma'}{A'}),
    \\
    d_P (\isterm{\Gamma}{t}{A}) &\defeq (\isterm{\Gamma'}{t'}{A'}),
    \\
    d_P (\eqtype{\Gamma}{A}{B}) &\defeq (\eqtype{\Gamma'}{A'}{B'}),
    \\
    d_P (\eqterm{\Gamma}{s}{t}{A}) &\defeq (\eqterm{\Gamma'}{s'}{t'}{A'}),
    \\
    \shortintertext{where}
    \Gamma' &\defeq d_P(\Gamma), \\
    A' &\defeq \act{{D_{\position{\Gamma}}}}
          \genapp{\symb{c}}_{((\RuleBoundary{d(P); \mvpromote{\Gamma'}}{\bdryhead \type}), \mvpromote{A})},
    \\
    B' &\defeq \act{{D_{\position{\Gamma}}}}
          \genapp{\symb{c}}_{((\RuleBoundary{d(P); \mvpromote{\Gamma'}}{\bdryhead \type}), \mvpromote{B})},
    \\
    t' &\defeq \act{{D_{\position{\Gamma}}}}
          \genapp{\symb{c}}_{((\RuleBoundary{d(P); \mvpromote{\Gamma'}}{\bdryhead : \mvpromote{A'}}), \mvpromote{t})},
    \\
    s' &\defeq \act{{D_{\position{\Gamma}}}}
          \genapp{\symb{c}}_{((\RuleBoundary{d(P); \mvpromote{\Gamma'}}{\bdryhead : \mvpromote{A'}}), \mvpromote{s})}.
  \end{align*}
  By having $d$ map $\bdryhead$ to $\bdryhead$ the above clauses also provide the action of~$d$ on boundaries.
  Finally, let $d$ map a sequential rule $\SequentialRule{P}{J}$ in~$T$ to the sequential rule $\SequentialRule{d(P)}{J'}$ where
  $d_P(\types J) = (\types J')$, and similarly for rule-boundaries.

  We have arranged~$d$ in such a way that $\act{t} (d(R)) = R$ for any sequential rule~$R$ over $T$.
  Moreover, if $R$ is derivable in~$T$, then $d(R)$ is derivable in~$\wfreplace{T}$, by an appeal to suitable symbol rules in~$\wfreplace{T}$. For instance, $d$ maps the rule $\SequentialRule{P}{A \type}$ to the rule $\SequentialRule{d(P)}{\genapp{\symb{c}}_{((\RuleBoundary{d(P)}{\bdryhead \type}), A)} \type}$. If the former is derivable in~$T$ then the latter is a symbol rule of~$\wfreplace{T}$.

  At last, let us define the section~$s$ of~$t$. Consider first a type symbol $S \in \Sigma_T$. Because~$T$ is acceptable, it has a unique symbol rule $\SequentialRule{P}{\genapp{S} \type}$. When we map it with $d$ we get
  \begin{equation*}
    \SequentialRule{d(P)}{\genapp{\symb{c}}_{((\RuleBoundary{d(P)}{\bdryhead \type}), \genapp{S})} \type},
  \end{equation*}
  which is a symbol rule in~$\wfreplace{T}$. We may therefore take $s_S \defeq \genapp{\symb{c}}_{((\RuleBoundary{d(P)}{\bdryhead \type}), \genapp{S})}$.

  A term symbol $S \in \Sigma_T$ is dealt with analogously. Its symbol rule takes the form $\SequentialRule{P}{\genapp{S} : A}$, which is mapped by $d$ to
  \begin{equation*}
    \SequentialRule{d(P)}{\genapp{\symb{c}}_{((\RuleBoundary{d(P)}{\bdryhead : A'}), \genapp{S})} : A'},
  \end{equation*}
  where $A' = \genapp{\symb{c}}_{((\RuleBoundary{d(P)}{\bdryhead \type}), A)}$,
  Again, this is a symbol rule in~$\wfreplace{T}$, so we may define $s_S \defeq \genapp{\symb{c}}_{((\RuleBoundary{d(P)}{\bdryhead : A'}), \genapp{S})}$.
\end{proof}

\begin{example}
  We revisit \cref{ex:type-in-type}, the type theory expressing type-in-type in a cyclic fashion as~$\isterm{}{\symb{u}}{\symb{El}(\symb{u})}$.
  Earlier we pointed out that the theory can be made well-founded by using the defined type constant $\eqtype{}{\symb{U}}{\symb{El}(\symb{u})}$.
  The well-founded replacement works much the same way, except that it is replete with many more defined symbols. The analogue of $\symb{U}$ appears already at the first stage of the construction. Indeed, the rule-boundary $\RuleBoundary{\emptyfam}{\bdryhead \type}$ is realised by $\symb{El}(\symb{u})$, hence the well-founded replacement contains the type constant $U = \symb{c}_{(\RuleBoundary{\emptyfam}{\bdryhead \type}, \symb{El}(\symb{u}))}$.
  We also have the new symbols
  \begin{equation*}
    \mathit{El} = \symb{c}_{(\RuleBoundary{(\isterm{}{\symb{a}}{U})}{\bdryhead \type}, \genapp{\symb{El}})}
    \qquad\text{and}\qquad
    \mathit{u} = \symb{c}_{(\RuleBoundary{}{\bdryhead : U}, \genapp{\symb{u}})},
  \end{equation*}
  and these suffice to express the type equation $\eqtype{}{U}{\mathit{El}(\mathit{u})}$, which is a specific rule of the well-founded replacement because it is mapped to a valid equation in the original type theory.
\end{example}



\section{Discussion and related work}
\label{sec:discussion-and-related-work}


We set out to give a detailed and general mathematical definition of dependent type theories, accomplished by an analysis of their traditional accounts.
Having completed the task, let us take stock of what has been accomplished.

We calibrated abstraction at the level that keeps a connection with concrete syntax, but also clearly identifies the category-theoretic structure underlying the abstract syntax.
As such, our work may serve as a theoretical grounding and a guideline for practical implementations of type theories on one hand, and on the other as a ladder to be climbed and discarded by those who wish to ascend to higher, more abstract viewpoints of dependent type theories.

There is not much to remark on our treatment of raw syntax, as the topic has been studied before and is well understood, except to remark that scope systems have served us well as a general approach to scoping and binding of variables.

More interesting are the subsequent stages of our definition.
Giving the definition in generality forces us to isolate and precisely define various notions that are traditionally treated only informally, and often only passed on in folklore rather than in writing. 
For instance, we articulate precisely the distinction between the syntactic specification of an inference rule, and the scheme of closure conditions that it begets --- a distinction which, once seen, was implicitly present all along, but which is not generally appreciated or consciously articulated.

We initially considered raw theories as just a stepping stone towards the definition of well-presented type theories, but have come to feel that they are of significant intrinsic interest.
Their simplicity makes them easy to work with, and even though they permit deviations from the orthodox teachings, they boast with a surprisingly rich collection of meta-theorems.

The well-behavedness properties of raw rules and raw type theories, such as presuppositivity, tightness, and congruity, took some effort to define and explain, but quickly paid off.
On a technical level, they allowed us to fine-tune the requirements that enable the various meta-theorems.
More importantly, once we incorporated them into our type-theoretic vocabulary they streamlined communication and invigorated the mind where there used to be just an uneasy adherence to heuristic techniques.

We hope that our selection of meta-theorems is illustrative enough to inspire further generally applicable meta-theorems, and comprehensive enough to relieve future designers of type theories from having to redo the work.
We have intentionally restrained any category-theoretic analysis of the landscape we explore, but it will be visible in the background to many readers, demanding future exploration of the categorical structure of the syntactic notions, both for its own sake and to connect with a general categorical semantics.
This, of course, we hope to return to in future work.

How widely does our definition of type theories cast its net? It takes little effort to enumerate many examples of interest, such as variants of Martin-Löf type theory~\citep{martin-lof:introduction}, in both its intensional and extensional incarnations, homotopy type theory~\citep{hott-book}, simple type theory~\citep{church32:_set_postul_found_logic}, some presentations of the higher-order logic of toposes~\citep{lambek-scott-book}, etc. But it is equally easy to list counter-examples: System F~\citep{system-F,Reynolds74} and related type systems that directly quantify over all types, pure type systems~\citep{pure-type-systems}, cubical type theories~\citep{cohen15:_cubic_type_theor}, cohesive type theories~\citep{cohesive-tt}, etc. Can such a diverse collection of formalisms be unified under the umbrella of an even more general definition of type theories? Doubtlessly, our work can be pushed and stretched in various directions, and we hope it will.  But we also state again that we do not intend our definitions to be definitive or prescriptive, nor consider our methods to be superior to others. After all, type theory is an open-ended idea.

\medskip


Several years ago Vladimir Voevodsky's relentless inquiries into the precise mathematical underpinnings of type theories motivated us to undertake the study of general type theories. We were hardly alone to do so. Voevodsky himself initially worked to develop the framework of B-systems and C-systems~\citep{B-systems,C-systems}, but these will remain tragically unfinished. There is by now a spectrum of various approaches to the meta-theory of type theories, which cannot be justly reviewed in the remaining space. We only mention a selection of contemporaneous developments and their relation to our work.


\paragraph*{Logical framework approaches}

When discussing and presenting this work, we have often been asked why we bothered, when logical frameworks (LF)~\citep{harper-honsell-plotkin:framework,pfenning:logical-frameworks} already give a satisfactory definition of type theories.

The main answer is that most work with logical frameworks does not give a general definition in the same sense that we are looking for.
It sets up a framework within which many type theories may be defined, but that is not quite the same thing.
Indeed, at the point when we were first embarking on the present project, no definition in a generality close to our aims had been proposed in the LF literature (to our knowledge), nor could we see how to do so using LF methods.
Since then, Uemura has succeeded in giving a very clean general definition using the LF; we discuss that work in detail below.

A significant secondary motivation for the present approach, though, was to directly recover the standard “naïve” presentation of particular type theories, in specific examples.
This problem is, by design, something that LF-based approaches bypass entirely.
One may argue --- as some have --- that this desire is misguided: that since LF-based approaches are so much cleaner, naïve syntax should be discarded as obsolete, and LF-embedded presentations of theories preferred as primitive.
We however find that view somewhat unsatisfactory, for several reasons.

Firstly, even if we \emph{should} be always reading type theories as LF presentations, we \emph{have not} been.  At least within the literature on constructive type theory in the tradition of Martin-Löf, most work still uses the naïve reading, including the work of Martin Hofmann~\citep{hofmann:syntax-and-semantics} and others~\citep{martin-lof:introduction,streicher91:_seman,hott-book}.
Or rather, most of the literature stays silent about the issue, but where the intended reading is made clear, it tends to be the naïve one.
Secondly, most work using LF approaches explicitly comments on the setup, and often gives or cites adequacy theorems.
This seems to suggest that writers agree that the correctness of LF presentations of type theories rests, in part, on their connection to the naïve presentations.
Finally, it seems very difficult to adopt a position of \emph{completely} discarding the naïve readings, and reading all presentations of type theories always as shorthands for their LF embedding.
This is because the framework is itself a type theory, whose presentation must sooner or later be given the naïve reading, rather than as embedded in a further framework --- it cannot be “turtles all the way”.

We therefore believe it is important to have both the naïve and LF-based definitions of syntax defined and developed in as wide a generality as possible (as well as other approaches, such as those of categorical logic).
The naïve approach is most natural and conceptually basic.
The LF approach is cleaner and simpler to set up, and easier to analyse and apply for some purposes.
Both should be available, and connected by an adequacy theorem, not just in special cases but in generality.


\paragraph*{Uemura’s general type theories}

Another general definition of type theories has recently been given by Taichi Uemura.
Indeed, \citep{uemura19:_gener_framew_seman_type_theor} provides two definitions, one semantic and one syntactic, and shows their correspondence with a general initiality theorem.

In terms of generality, Uemura’s definition essentially subsumes ours and indeed generalises much further, encompassing type theories with different judgement forms, while still retaining enough structure to allow proofs of important type-theoretic meta\-theorems.
It therefore quite satisfactorily solves one of the major goals we set out to solve with the present project.

On inspection, however, Uemura’s approach is sufficiently different that it is complementary with our approach, rather than subsuming it.
His syntactic definition is given via a particularly ingenious use of a logical framework; as with other LF-based approaches, this keeps the setup very clean, but means that in specific examples, it does not so closely recover the standard naïve reading of the theory in question.

It does not, therefore, address our secondary goal of taking seriously the naïve reading of syntax, and directly recovering it in examples.
We therefore hope that it should be possible in the future to connect our syntactic definition with Uemura’s by means of a generalised adequacy theorem, and show that for theories with Martin-Löf’s original four judgment forms, the two approaches are equivalent.


\paragraph*{Other general definitions of dependent type theories}
\label{sec:other-related-work}

Independently of our work, Guillaume Brunerie has proposed a general syntactic definition of dependent type theories~\citep{brunerie20}, and is formalising it in Agda~\citep{brunerie:_agda}.
His approach is very similar to ours, which we see as a welcome convergence of ideas.

Valery Isaev has also proposed a definition of dependent type theories, in \citep{isaev17:_algeb}.
His approach is semantic, avoiding syntax with binding, and defining dependent type theories as certain essentially algebraic theories extending the theory of categories with families.
The generality of his definition seems to be roughly similar to ours, but a precise comparison seems slightly subtle to state, and is beyond the scope of the present paper.


\paragraph*{Meta-theory of type theory in type theory}

When one takes type theory seriously as a foundation of mathematics, it is natural and even imperative, to develop the meta-theory of type theories in type theory itself. Whereas in the LF approach the expressivity of the ambient formalism is curbed to ensure an adequacy theorem, here we gladly trade adequacy for working in a full-fledged dependent type theory.

Such a project has been undertaken by Thorsten Altenkirch and Ambrus Kaposi~\citep{qiit-tt}. Broadly speaking, a \emph{specific} object type theory is constructed in one fell swoop as a quotient-inductive-inductive type (QIIT) that incorporates all judgement forms, the structural and the specific rules. The inductive character of the definition automatically provides the correct notion of derivation, while the ambient type theory guarantees that only derivable judgements can be constructed --- the ``raw'' stage is completely side-stepped. The quotienting capabilities favourably relate the ambient propositional equality with the object-level judgmental equality. From a semantic point of view, the construction is the type-theoretic analogue of an initial-model construction. The ingenuity of the definition allows one to prove many meta-theorems quite effortlessly, especially with the aid of a proof assistant.

Our bottom-up approach can add little to the setup in terms of abstraction, but can possibly provide useful clues on how to pass from the case-by-case presentations of object type theories to a single type whose inhabitants are (presentations of) general type theories. For instance, the type of well-presented type theories would have to improve on our staged definitions by joining them into a single mutually recursive inductive definition that would incorporate the above QIIT construction of a single object-level theory, suitably adapted, as the realisation of a well-presented theory.

\bibliographystyle{plainnat}
\bibliography{general-type-theories.bib}

\appendix

\section{Formalisation in Coq}
\label{sec:formalisation-coq}

We have partially formalised our work in the Coq proof assistant~\citep{coq} on top of the HoTT library~\citep{bauer17:_hott}. The formalisation is publicly available at~\citep{lumsdaine:_formal}, wherein further instructions are given on how to compile and use the formalisation.
The formalisation will continue to evolve in future; the description here refers to the version tagged as \texttt{\small arXiv}.

The formalisation broadly follows the structure of the paper.
\Cref{tab:dict-paper-coq} lists selected major definitions and theorems from the paper, along with the names of the corresponding items in the formalisation, if any.
Almost all material of \cref{sec:preliminaries,sec:raw-syntax,sec:typing} has been formalised, as has some but not all of \cref{sec:well-behavedness}.
Versions of the main definitions of \cref{sec:well-founded-type-theories} are also formalised, but at time of writing, their treatment in the formalisation has non-trivial differences from the definitions here; such items are marked with an asterisk.
 
\begin{table}[htbp]
  \centering
  \footnotesize
  \begin{tabular}{ll}
    \toprule
    Paper & Formalisation \\ \midrule
    Family
    (\cref{def:family})
    & \coqident{Auxiliary.Family.family}
    \\
    Closure rule
    (\cref{def:closure-rule})
    & \coqident{Auxiliary.Closure.rule}
    \\
    Closure system
    (\cref{def:closure-system})
    & \coqident{Auxiliary.Closure.system}
    \\
    Derivation
    (\cref{def:closure-system-derivation})
    & \coqident{Auxiliary.Closure.derivation}
    \\
    Scope system
    (\cref{def:scope-system})
    & \coqident{Syntax.ScopeSystem.system}
    \\
    De Bruijn scope system
    (\cref{ex:de-bruijn-scope-systems})
    & \coqident{Examples.ScopeSystemExamples.DeBruijn}
    \\
    Syntactic class
    (\cref{def:syntactic-class})
    & \coqident{Syntax.SyntacticClass.class}
    \\
    Arity
    (\cref{def:arity})
    & \coqident{Syntax.Arity.arity}
    \\
    Signature
    (\cref{def:signature})
    & \coqident{Syntax.Signature.signature}
    \\
    Signature map
    (\cref{def:signature-map})
    & \coqident{Syntax.Signature.map}
    \\
    Raw expressions
    (\cref{def:raw-syntax})
    & \coqident{Syntax.Expression.expression}
    \\
    Raw substitution
    (\cref{def:raw-substitution})
    & \coqident{Syntax.Substitution.raw\_substitution}
    \\
    Metavariable extension
    (\cref{def:metavariable-extensions})
    & \coqident{Syntax.Metavariable.extend}
    \\
    Instantiation of syntax
    (\cref{def:instantiation})
    & \coqident{Syntax.Metavariable.instantiate\_expression}
    \\
    Raw context
    (\cref{def:raw-context})
    & \coqident{Typing.Context.raw\_context}
    \\
    Raw rule
    (\cref{def:raw-rule})
    & \coqident{Typing.RawRule.raw\_rule}
    \\
    Instantiation of derivations
    (\cref{cor:instantiation-of-derivations})
    & \coqident{Typing.RawTypeTheory.instantiate\_derivation}
    \\
    Associated closure system
    (\cref{def:associated-closure-system})
    & \coqident{Typing.RawRule.closure\_system}
    \\
    Structural rules
    (\cref{def:structural-rules})
    & \coqident{Typing.StructuralRule.structural\_rule}
    \\
    Congruence rule
    (\cref{def:congruence-rule})
    & \coqident{Typing.RawRule.raw\_congruence\_rule}
    \\
    Raw type theory
    (\cref{def:raw-type-theory})
    & \coqident{Typing.RawTypeTheory.raw\_type\_theory}
    \\
    Acceptable rule
    (\cref{def:acceptable-rule})
    & (not formalised)
    \\
    Acceptable type theory
    (\cref{def:theory-good-properties})
    & \coqident{Metatheorem.Acceptability.acceptable}
    \\
    Presuppositions theorem
    (\cref{thm:presuppositions})
    & \coqident{Metatheorem.Presuppositions.presupposition}
    \\
    Admissibility of renaming
    (\cref{lem:admissibility-renaming})
    & \coqident{Metatheorem.Elimination.rename\_derivation}
    \\
    Admissibility of substitution
    (\cref{lem:admissibility-substitution})
    & \coqident{Metatheorem.Elimination.substitute\_derivation}
    \\
    Admissibility of equality substitution
    (\cref{lem:admissibility-equality-substitution})
    & \coqident{Metatheorem.Elimination.substitute\_equal\_derivation}
    \\
    Elimination of substitution
    (\cref{thm:elimination-substitution})
    & \coqident{Metatheorem.Elimination.elimination}
    \\
    Uniqueness of typing
    (\cref{thm:tight-uniqueness-of-typing})
    & (not formalised)
    \\
    Inversion principle
    (\cref{thm:inversion-principle})
    & (not formalised)
    \\
    Sequential context
    (\cref{def:seq-cxt-by-rules})
    & \coqident{ContextVariants.wf\_context\_derivation}$^{(*)}$
    \\
    Sequential rule
    (\cref{def:sequential-rule})
    & (not formalised)
    \\
    Well-presented rule
    (\cref{def:realisation-well-presented-rule})
    & (not formalised)
    \\
    Well-presented type theory
    (\cref{def:well-presented-type-theory})
    & \coqident{Presented.TypeTheory.type\_theory}$^{(*)}$
    \\
    Well-founded replacement
    (\cref{thm:wf-replacement-equivalence})
    & (not formalised)
    \\ \bottomrule
  \end{tabular}
  \caption{The correspondence between the paper and the formalisation~\citep{lumsdaine:_formal}. Items marked with~$(*)$ differ non-trivially from their counterparts in the paper.}
  \label{tab:dict-paper-coq}
\end{table}

Throughout the paper we worked rigorously but informally, and without discussing which mathematical foundation might be sufficient to carry out the constructions and proofs.
On this topic we may consult the formalisation.

Our formalisation is built on top of a homotopy type theory library with an eye towards future formalisation of the categorical semantics of type theories, but is so far agnostic with respect to commitments such as the Univalence axiom or the Uniqueness of identity proofs. The only axiom that we use is function extensionality. In other words, the code can be read in plain Coq.

The formalisation confirms that our development is constructive, there are no uses of excluded middle or the axiom of choice.

It is a bit harder to tell how many universes we have used, because Coq relieves the user from explicit handling of universes. Two seem to be enough, one to serve as a base and another to work with families over the base. The base universe can be very small, say consisting of the decidable finite types, if we limit attention to finitary syntax only.

We rely in many places on the ability to perform inductive constructions and carry out proofs by induction, and so we require some meta-theoretic support for these. Of course, there is no shortage of induction in Coq, and even a fairly weak set theory will have the capability to construct the necessary inductive structures, whereas the higher-order logic of toposes would have to be extended with $W$-types. Alternatively, we could restrict to finitary syntax, contexts and rules throughout to allow Gödelization of 
syntax and reliance on induction supplied by arithmetic.



\end{document}
